\crefname{assumption}{assumption}{assumptions}
\newcommand{\vertiii}[1]{{\left\vert\kern-0.25ex\left\vert\kern-0.25ex\left\vert #1 
    \right\vert\kern-0.25ex\right\vert\kern-0.25ex\right\vert}}
\newcommand{\llrrparen}[1]{% \llrrparen{..}
  \left\{\mkern-6mu\left\{#1\right\}\mkern-6mu\right\}}
\newcommand{\blue}{\color{black}} %{\color{blue}}
\newcommand{\blueX}[1]{{\color{black}#1}}%{\color{blue}}
\newcommand{\greenX}[1]{{\color{black}#1}}%{{\color{ForestGreen}#1}}%{\color{blue}}
\newcommand{\red}{\color{black}}
\newcommand{\magenta}{\color{black}}
\newcommand{\magentaX}[1]{{\color{black}#1}}
\newcommand{\EK}[1]{{\color{black}#1}}
\newcommand{\TODO}[1]{{\color{black}#1}}
\newcommand{\green}[1]{{\color{black}#1}}
\newcommand{\redhot}[1]{{\color{black}#1}}%\newcommand{\redhot}[1]{{\color{red}#1}}
\newtheorem{thm}{Theorem}[section]
\newtheorem{cor}[thm]{Corollary}
\newtheorem{lem}[thm]{Lemma}
\newtheorem{ass}[thm]{Assumption}
\theoremstyle{definition}
\newtheorem{defn}[thm]{Definition}
\theoremstyle{remark}
\newtheorem{rem}[thm]{Remark}
\numberwithin{equation}{section}
\begin{document}
%\title[{%\MakeLowercase{hp}-version arbitrarily %shaped boundary elements dG method
%Efficient cut finite elements method without ghost %penalization}]{%{\MakeLowercase{hp}}-version of a 
%Efficient cut finite elements method without ghost %penalization: %based on arbitrarily shaped boundary %elements %discontinuous Galerkin method 
%application in Stokes flows}
\title[{\MakeLowercase{hp}-version arbitrarily shaped boundary elements dG method: Stokes systems}]{% of %an unfitted finite elements method without ghost penalization based on 
{\MakeLowercase{hp-}}version analysis for arbitrarily shaped elements \EK{on the boundary} discontinuous Galerkin method for {{S}}tokes systems}
%    author three information
\author{Efthymios N. Karatzas\textsuperscript{1%,2 
}}
\address{\textsuperscript{1}School of Mathematics, Aristotle University of Thessaloniki, Thessaloniki 54124, Greece%, %National Technical University of Athens, Zografou Campus, 15780, Greece
 %and FORTH, Institute of Applied and Computational Mathematics, Heraclion, Crete, Greece
.}
%\address{\textsuperscript{2}SISSA (affiliation), International School for Advanced Studies, Mathematics Area, mathLab Trieste, Italy.}
%%\thanks{\textsuperscript{*}Corresponding Author.}
\email{ekaratza@math.auth.gr%karmakis@math.ntua.gr 
%\& efthymios.karatzas@sissa.it
}
\subjclass[2000]{Primary}%
\keywords{\EK{A}rbitrarily shaped elements, 
          discontinuous Galerkin finite element method, 
          %Stokes problem, 
          %level set geometries, 
          hp-version stability,
          a-priori estimates,
         fluid dynamics.
          }%
\date{\today} 

\thanks{}%
\subjclass{}% 

%\date{}%
%\dedicatory{}%
%\commby{}%
% ----------------------------------------------------------------
\begin{abstract}
In the present work, we examine and analyze %, for the fist time in our knowledge,
 an \TODO%ek
 {$hp$-version  interior penalty discontinuous Galerkin finite element method for the numerical approximation of a steady fluid system}
 %\TODOr{of the stationary Stokes equation} 
 \TODO%ek
 {on %general 
 computational meshes consisting of polytopic elements on the boundary.%, for instance, curved or polygonal/polyhedral.
 } 
 %\TODOr{alternative of the unfitted mesh %to the true geometry 
%finite element method improved by omitting computationally expensive, especially for fluids, stabilization type of penalty onto the boundary area, namely the so-called ghost penalty}. % as fictitious domain methods traditionally do. 
This approach is based on the %improved
discontinuous Galerkin method, %focusing on
enriched by  %\EK{employing}
 arbitrarily shaped elements %\TODOek{on the boundary} 
 techniques 
 \TODO%k
 {as %recently 
 has been introduced in \cite{Cangiani2022hpVersionDG}}. 
In this framework, \TODO%ek
{and employing extensions of trace, Markov-type, and $H^1$/$L^2$-type inverse estimates to arbitrary element shapes}, we examine a stationary Stokes fluid system \TODO%ek
{enabling the proof} of the inf/sup condition and the {$hp$}- a priori error estimates, %\TODOr{,  to our knowledge for the first ???time in the literature \TODOek{??? for fluids}}, 
while we investigate the optimal convergence rates numerically. 
%
%\TODOr{which turns out to be explicitly dependent on the particular element shape, but essentially independent on small shape variations. This is achieved by proving extensions of classical trace and Markov-type inverse estimates to arbitrary element shapes. A further new H1 L2-type inverse estimate on essentially arbitrary element shapes enables the proof of inf-sup stability of the method in a streamline-diffusion-like norm.}
This approach recovers and integrates the flexibility and superiority of the discontinuous Galerkin %\TODOr{unfitted}
methods for fluids whenever geometrical deformations are taking place
\TODO%ek
{by degenerating   the edges, facets, of the polytopic elements only on the boundary,} combined with  the efficiency of the $hp$-version techniques based on arbitrarily shaped elements %on the boundary
\TODO%ek
{without requiring any mapping from a given reference frame}.

\end{abstract}
\maketitle
% ----------------------------------------------------------------
\section{Introduction}
%
%We extend the applicability of the popular interior-penalty discontinuous Galerkin (dG) method discretizing advection-diffusion-reaction problems to meshes comprising extremely general, essentially arbitrarily-shaped element shapes. In particular, our analysis allows for curved element shapes, without the use of non-linear elemental maps. The feasibility of the method relies on the definition of a suitable choice of the discontinuity penalization, which turns out to be explicitly dependent on the particular element shape, but essentially independent on small shape variations. This is achieved upon proving extensions of classical trace and Markov-type inverse estimates to arbitrary element shapes. A further new H1 -- L2-type inverse estimate on essentially arbitrary element shapes enables the proof of inf-sup stability of the method in a streamline-diffusion-like norm. These inverse estimates may be of independent interest. A priori error bounds for the resulting method are given under very mild structural assumptions restricting the magnitude of the local curvature of element boundaries. Numerical experiments are also presented, indicating the practicality of the proposed approach.
%
 Recent years have shown scientists' interest \EK{significantly} focused on %witnessed a coordinated effort to generalize mesh concepts in
the context of Galerkin finite element methods%  (see e.g. \cite{Georgoulis17} and referrences therein)
. %A key argument has been that more
%general-shaped elements/cells can potentially lead to computational complexity reduction. 
This effort has given birth to new methods based on general-shaped elements which  arise computational complexity reduction, %given rise to a number of recent approaches: 
like mimetic finite difference methods \cite{DaVeiga14mimetics}, virtual element methods {\cite{DAVEIGA13virtualelements}}, %[12, 13], 
various discontinuous Galerkin approaches such as interior penalty Galerkin methods \cite{Georgoulis17}, and hybridized discontinuous Galerkin \cite{CockburnHDG09,DIPIETRO15}, % and the related hybrid high-order methods [27]
which are very attractive and used by the engineering \EK{and mathematics} community. 
%---------
\TODO%ek
{We continue by reporting more works related to discontinuous Galerkin (dG) methods, similar finite element frameworks, and advances, $h$- or $hp$- version, \TODO%r
{fluid and Stokes systems} \TODO%ek
{related} literature. We also introduce the skeleton of the present work.}
%---------
\TODO%ek
{Hence,} other %Earlier 
approaches have involved non-polynomial
approximation spaces like polygonal and other generalized finite element methods, %such as polygonal and other generalized finite element methods 
\cite{SuTa04,BeTe10}. %[58, 32]. %, have also been developed and used by the engineering community. All
%the above numerical frameworks allow for polygonal/polyhedral element shapes
%(henceforth, collectively termed as polytopic) of varying levels of generality.
%
We \EK{refer} to \cite{Georgoulis17} %to our monograph [20] for details on the 
for admissible polygonal/polyhedral element shapes for which the general interior penalty %(IP) -
discontinuous Galerkin method (IP-dG), appears both stable and convergent while generalizes under mild assumptions %. The mild element shape assumptions in [20] are such to ensure
the validity of %crucial generalizations of 
standard approximation results, such as inverse estimates, best approximation estimates, and extension theorems. %. %Thus, the

In a $p$-version Galerkin framework achieving  
%context: to achieve spectral/
exponential convergence, for smooth partial differential equation  problems defined  on generally curved domains using %we are required to use 
isoparametrically mapped elements, we recall \cite{MURTI86,MURTI88}, while for non-linear maps on element patches  that %[56, 49]
 are used to  represent domain geometry we refer to \cite{Melenk2002hpFiniteEM,Schwa98}. Although in both cases\EK{, as the polynomial order increases,} the aforementioned mapping appears very costly and/or difficult to construct and 
implement in practice. %in three dimensions. %This is both cumbersome to implement and costly as the polynomial degree increases [50, 51]. 
\TODO%ek
{For Stokes flow systems, in \cite{T02,BJK90}, %. A variant utilizing piecewise solenoidal vector fields for the velocity and continuous approximations for the pressure was studied in 
%
%T02
an $hp$-discontinuous Galerkin approximation that shows better stability properties than the
corresponding conforming ones is examined with finite element triangulation not required to be conforming employing discontinuous pressures and velocities\EK{,} while it is
%No additional unknown
%fields need to be introduced, but only suitable bilinear forms , 
defined on the interfaces between the elements %and
involving the jumps of the velocity and the average of
the pressure. %We consider hp approximations using Qk!-Qk velocity-pressure pairs
%with k! = k + 2, k + 1, k. Our methods 
%We prove that our first two choices of velocity
%spaces ensure uniform divergence stability with the respect to the mesh size h.
%Numerical results show that they are uniformly stable with respect to the local
%polynomial degree k, a property that has no analog in the conforming case. An
%explicit bound in k which is not sharp is also proven. Numerical results show that
%if equal order approximation is chosen for the velocity and pressure, no spurious
%pressure modes are present but the method is not uniformly stable either with respect to h or k. We derive a priori error estimates generalizing the abstract theory
%of mixed methods. Optimal error estimates in h are proven. As for discontinuous
%Galerkin methods for scalar diffusive problems, half power of k is lost for p and hp
%approximations independently of the divergence stability.
The work of \cite{GRW05} also, describes a family of dG finite element methods formulated and analyzed for Stokes and Navier-Stokes problems introducing the good behavior of the inf-sup and optimal energy estimates for the velocity
and %L2 estimates for the 
pressure. In addition,  this method
can treat a finite number of non-overlapping domains with non-matching grids
at interfaces.}
%georgoulis17
%
%The purpose of this volume is to present some recent developments concerning
%the mathematical aspects of discontinuous Galerkin finite element methods on
%general computational meshes consisting of polygonal/polyhedral (henceforth,
%jointly referred to as polytopic) element domains. We begin by providing some
%historical background and motivation for this work, as well as introducing some
%standard notation.
%
%\cite{BJK90} %. Mixed discontinuous Galerkin methods 
%
%Nonconforming finite element approximations to solutions of the Stokes equations are constructed. Optimal rates of convergence are proved for the velocity and pressure approximations. For the pressure approximation, $C^0 $ piecewise polynomial functions are used. The class of vector fields used to approximate the velocity field have piecewise polynomial components, discontinuous across interelement boundaries. On each “triangle” these vector fields satisfy the incompressibility condition pointwise. It is shown that these piecewise solenoidal vector fields possess optimal approximation properties to smooth solenoidal vector fields on domains with curved boundaries.
%Read More: https://epubs.siam.org/doi/abs/10.1137/0727085?journalCode=sjnaam
%
In \cite{CKSS02, SST03,Ka24}, %consider the gradient of the velocity as an additional unknown. 
% In this paper, we introduce and analyze
 Stokes system local discontinuous Galerkin methods for a class of shape regular meshes with hanging nodes is investigated, % we derive a priori estimates for the L2 -norm of the errors in the velocities and the pressure. We show that optimal-order estimates are obtainedwhen polynomials of degree k are used for each component of the velocity and polynomials of degree k-1 for the pressure, for any $k\ge1$. We also consider the case in which all the unknowns are approximated with polynomials of degree k and show that, although the orders of convergence remain the same, the method is more efficient. Numerical experiments verifying these facts are displayed.
%Read More: https://epubs.siam.org/doi/abs/10.1137/S0036142900380121?journalCode=sjnaam
%
%Fictitious domain methods have a long history, dating back to the pioneering  work of Peskin 
%
%We consider 
as well as, several mixed discontinuous Galerkin approximations with their a priori error estimates. %of the Stokes problem and propose an abstract framework for their analysis. Using this framework, we derive a priori error estimates for hp-approximations on tensor product meshes. We also prove a new stability estimate for the discrete divergence bilinear form.
%
%Read More: https://epubs.siam.org/doi/abs/10.1137/S0036142901399124?journalCode=sjnaam
%
In \cite{BE09}, 
%while elliptic interface problems have been discretized via an \itshape hp \normalfont discontinuous Galerkin method  
%
%In this paper we present 
a %new 
discontinuous Galerkin (dG) approach to simulations on complex-shaped domains, using trial and test functions defined on a structured grid with essential boundary conditions imposed weakly,  %. via the DG formulation. This method offers a 
where the discretization allows the number of unknowns to be independent of the complexity of the domain.
%
%We will show numerical computations for an elliptic scalar model problem in R2 and R3. Convergence rates for different polynomial degrees are studied. 
%
\cite{M12} concerns %an extension of the local dG method 
% In this article 
an unfitted dG method proposing to discretize elliptic interface problems, %. The method is based on the symmetric interior penalty discontinuous Galerkin method and can also be interpreted as a generalization of the method given by A. Hansbo and P. Hansbo [Comput. Methods Appl. Mech. Engrg., 191 (2002), pp. 5537--5552]. We prove the optimal $h$-convergence of the method for arbitrary $p$ in the energy- and in the $L_2$-norm. In fact we present an 
where $h$- and $hp$- error estimates and convergence rates are proved. %We introduce a variant of the method, for which the same $hp$-convergence rates can be proved in the energy norm and which aims at controlling pointwise errors near the interface. The analysis includes grids with hanging nodes, and the proposed method is symmetric and inherits the attractive locality property of general discontinuous Galerkin methods. Numerical experiments are included.
%Read More: https://epubs.siam.org/doi/10.1137/090763093?mobileUi=0
%
The authors of \cite{WC14}, %and a high--order hybridizable dG method 
treat an unfitted dG method %is proposed
 for the elliptic interface problems, based on a variant of the local dG method, obtaining the optimal convergence for the exact solution in the energy norm and its flux  in the $L^2$ norm. %These results are the same as those in the case of elliptic problems without interface. Finally, some numerical experiments are presented to verify our theoretical results.
In \cite{BEFI11} %. More recently, motivated from PDEs arising from conservation laws on evolving surfaces, an unfitted dG approach was developed for advection problems 
%
%For the simulation of transport processes in porous media %effective parameters for the physical processes on the target scale are required. Numerical upscaling, as well as multiscale approaches can help where experiments are not possible, or hard to conduct.
%
%In 2009, Bastian and Engwer proposed
an unfitted discontinuous Galerkin method for transport processes in complex domains in porous media problems is examined, %e.g. on the pore scale. We apply this method to a parabolic test problem. Convergence studies show the expected second order convergence. As an application example solute transport in a porous medium at the pore scale is simulated.
%
%Macroscopic breakthrough curves are computed using direct simulations. The method allows 
 allowing finite element meshes that are significantly coarser than those required by standard conforming finite element approaches. %Thus it is possible to obtain reliable numerical results for macroscopic parameter already for a relatively coarse grid.
Further, in 
\cite{ERW16} 
%. Other applications include the linear transport equation
%
%Motivated by considering partial differential equations arising from conservation laws posed on evolving surfaces, a new numerical method for 
an advection problem is developed %and simple numerical tests are performed. The method is 
based on an unfitted discontinuous Galerkin approach where the surface is not explicitly tracked by the mesh which means the method is %extremely
 flexible with respect to geometry %Furthermore, the discontinuous Galerkin approach is well-suited to 
efficiently capturing the advection driven by the evolution of the surface without the need for a space-time formulation, back-tracking trajectories or streamline diffusion. %The method is illustrated by a one-dimensional example and numerical results are presented that show good convergence properties for a simple test problem.
Finally, in  \cite{EMNS19} 
%and the Laplace--Beltrami operator 
a
%We present new stabilization terms for solving the 
linear transport equation on a cut cell mesh using the upwind discontinuous Galerkin method with piecewise linear polynomials and  with a method of lines approach is presented employing explicit time-stepping schemes, regardless of the presence of cut cells. % have been tested. %h, we start with a standard upwind DG discretization for the background mesh and add penalty terms that stabilize the solution on small cut cells in a conservative way. Then, one can use explicit time stepping, even on cut cells, with a time step length that is appropriate for the background mesh. In one dimension, we show monotonicity of the proposed scheme for piecewise constant polynomials and total variation diminishing in the means stability for piecewise linear polynomials. We also present numerical results in one and two dimensions that support our theoretical findings.

In addition, various classes of fitted and unfitted mesh methods for interface
or transmission problems may be seen as %exploit 
generalized concepts of mesh elements, as well as, %in an effort to provide accurate representations of internal interfaces. 
 several unfitted finite element methods have been proposed in recent years, indicatively we mention the unfitted boundary finite element methods \cite{BaElliot87} %[9]
and  immersed finite element methods %[37, 36]
\cite{MaSco17_2}. %virtual element methods [25],
\TODO%ek
{More extensively, an optimally convergent method of %the
 fictitious type domains %/XFEM/CutFEM
  avoiding the numerical integration on cut mesh elements for a Poisson system has been introduced in \cite{L19},
%
%Highlights
%•
%We present an optimally convergent method of the fictitious domain/XFEM/CutFEM type avoiding the numerical integration on cut mesh elements.
%
%•
%The optimal convergence and the robustness of the method are confirmed both by the theoretical analysis and the numerical %experiments.
%%
%Abstract
%We present a method of CutFEM type for the Poisson problem with either Dirichlet or Neumann boundary conditions. The computational mesh is obtained from a background (typically uniform Cartesian) mesh by retaining only the elements intersecting the domain where the problem is posed. The resulting mesh does not thus fit the boundary of the problem domain. Several finite element methods (XFEM, CutFEM) adapted to such meshes have been recently proposed. The originality of the present article consists in avoiding integration over the elements cut by the boundary of the problem domain, while preserving the optimal convergence rates, as confirmed by both the theoretical estimates and the numerical results.
while in
\cite{HaHa02}
%
%In this paper we propose 
a method for the finite element solution of the elliptic interface problem, using an approach due to Nitsche is proposed allowing discontinuities internal to the elements approximating the solution across the interface. %We show that optimal order of convergence holds without restrictions on the location of the interface relative to the mesh. Further, we derive a posteriori error estimates for the purpose of controlling functionals of the error and present some numerical examples.
In addition, from a reduced basis for unfitted mesh methods point of view, evaluating the fixed background mesh used in immersed and unfitted mesh methods, parametrized Stokes and other flow systems have %been
managed to be solved using a unified reduced basis presenting the flexibility of such methods in geometrically parametrized Stokes, Navier-Stokes, Cahn-Hilliard systems as in \cite{KBR19,KSASR2019,KSNSRa2019,KSNSRb2019,KaRo21}. %In such cases, immersed and embedded methods compare favourably to standard FEM, providing simple and efficient schemes for the numerical approximation of PDEs in both cases of static and evolving geometries.
%
%Many  unfitted variants of discontinuous Galerkin methods have been proposed in the literature as a competitive approaches for simulations in complex domains. One of the  first applications involved an elliptic model problem 
%
For an immersed interface method for discrete surface representations employing accurate jump conditions evaluated along  interface representations using  projections, one could see \cite{KB18}, %ghost fluid 
and for a ghost fluid method coupled with a volume of fluid method employing an exact Riemann solver
 \cite{BG14}.  
For a fictitious domain finite element method, well suited for elliptic problems posed in a domain given by a level-set function without requiring a mesh fitting the boundary can be found in
\cite{DL19}. %and CutFEM 
%
%We propose a new fictitious domain finite element method, well suited for elliptic problems posed in a domain given by a level-set function without requiring a mesh fitting the boundary. To impose the Dirichlet boundary conditions, we search the approximation to the solution as a product of a finite element function with the given level-set function, which is also approximated by finite elements. Unlike other recent fictitious domain-type methods (XFEM, CutFEM), our approach does not need any nonstandard numerical integration (on cut mesh elements or on the actual boundary). We consider the Poisson equation discretized with piecewise polynomial Lagrange finite elements of any order and prove the optimal convergence of our method in the $H^1$ norm. Moreover, the discrete problem is proven to be well conditioned, i.e., the condition number of the associated finite element matrix is of the same order as that of a standard finite element method on a comparable conforming mesh. Numerical results confirm the optimal convergence in both $H^1$ and $L^2$ norms.
%
%
%Read More: https://epubs.siam.org/doi/abs/10.1137/19M1248947
%
%\cite{BCHLM2014?,BH2010,BH2012, L19, HaHa02}, 
%among others. For a comprehensive overview of this research area, the interested reader is referred to the review paper \cite{MI05}. Considerable impetus for such widespread investigations has been provided by  applications in fluids flow or in the context of reduced order modeling for parametrically--dependent domains 
}
The early work of \cite{P1972} %and are currently enjoying great popularity, having been successfully applied to a variety of problems. Several improved variants can be found in the recent literature, including such methods as the ghost-cell finite difference method 
handles %The subject of this paper is 
the flow of a viscous incompressible fluid containing immersed boundaries which move with the fluid and exert forces on the fluid, %. An example of such a boundary is the flexible leaflet of a human heart valve. It is the main achievement of the present paper that a method for solving the Navier-Stokes equations on a rectangular domain can now be applied to a problem involving this type of immersed boundary. This is accomplished by replacing the boundary by a field of force which is defined on the mesh points of the rectangular domain and which is calculated from the configuration of the boundary. In order to link the representations of the boundary and fluid, since boundary points and mesh points need not coincide, a semi-discrete analog of the $\delta$ function is introduced. Because the boundary forces are of order h-1, and because they are sensitive to small changes in boundary configuration, they tend to produce numerical instability. This difficulty is overcome by an implicit method for calculating the boundary forces, a method which takes into account the displacements that will be produced by the boundary forces themselves. The numerical scheme is applied to the two-dimensional simulation of flow around the natural mitral valve.
 and in \cite{WFC13} %cut--cell volume method 
a finite difference scheme with ghost cell technique is used to study viscous fluid flow %sloshing in 2D and 3D tanks 
with internal structures. %. The Navier-Stokes equations in a moving coordinate system are derived and they are mapped onto a time-independent and stretched domain. The staggered grid is used and the revised SIMPLEC iteration algorithm is performed. The developed numerical model is rigorously validated by extensive comparisons with reported analytical, numerical and experimental results. The present numerical results were also validated through an experiment setup with a tank excited by an inclined horizontal excitation or a tank mounted by a vertical baffle. The method is then applied to a number of problems including sloshing fluid in a 2D tank with a bottom-mounted baffle and in a 3D tank with a vertical plate. The phenomena of diagonal sloshing waves affected by a vertical plate are investigated in detail in this work. The effects of internal structures on the resonant frequency of a tank with liquid are discussed and the present developed numerical method can successfully analyze the sloshing phenomenon in 2D or 3D tanks with internal structures.
In \cite{PHO16}, %immersed interface 
%
%A cut-cell finite volume--finite element coupling approach for fluid--structure interaction in compressible flow
%
%We present a loosely coupled approach for the solution of fluid-structure interaction problems between a compressible flow and a deformable structure. The method is based on staggered Dirichlet-Neumann partitioning. The interface motion in the Eulerian frame is accounted for by 
a conservative cut-cell Immersed Boundary method %. The present approach enables 
with sub-cell resolution is analyzed. %by considering individual cut-elements within a single fluid cell, which guarantees an accurate representation of the time-varying solid interface. The cut-cell procedure inevitably leads to non-matching interfaces, demanding for a special treatment. A Mortar method is chosen in order to obtain a conservative and consistent load transfer. We validate our method by investigating two-dimensional test cases comprising a shock-loaded rigid cylinder and a deformable panel. Moreover, the aeroelastic instability of a thin plate structure is studied with a focus on the prediction of flutter onset. Finally, we propose a three-dimensional fluid-structure interaction test case of a flexible inflated thin shell interacting with a shock wave involving large and complex structural deformations.

We extend the literature with
\cite{HNPK13}, %. In fact, an unfitted dG method was shown to compare favorably to standard dG--FEM \cite{NWBE16}, providing a flexible and accurate alternative to solve the electroencephalography forward problem. An extension to a parabolic test case has been presented in 
%
%We 
where a high-order hybridizable dG method for solving elliptic interface problems in which the solution and gradient are nonsmooth because of jump conditions across the interface and it is endowed with several distinct characteristics. %First, they reduce the globally coupled unknowns to the approximate trace of the solution on element boundaries, thereby leading to a significant reduction in the global degrees of freedom. Second, they provide, for elliptic problems with polygonal interfaces, approximations of all the variables that converge with the optimal order of $k+1 in the L2(Omega)-norm$ where k denotes the polynomial order of the approximation spaces. Third, they possess some superconvergence properties that allow the use of an inexpensive element-by-element postprocessing to compute a new approximate solution that converges with order k+2. However, for elliptic problems with finite jumps in the solution across the curvilinear interface, the approximate solution and gradient do not converge optimally if the elements at the interface are isoparametric. The discrepancy between the exact geometry and the approximate triangulation near the curved interfaces results in lower order convergence. To recover the optimal convergence for the approximate solution and gradient, we propose to use superparametric elements at the interface. 
%
%--------------------------
%
%In this article, we present and analyse
 \cite{DWXW17} contains an unfitted hybridizable dG method mesh method for the Poisson interface problem constructing an ansatz function in the vicinity of the interface %we are able to derive an extended Poisson problem whose interface fits a given quasi-uniform triangular mesh. Then we adopt a hybridizable discontinuous Galerkin method to solve the extended problem 
with an appropriate choice of flux for treating the jump conditions, %. In contrast with existing approaches, the ansatz function is 
designed through a %delicate 
piecewise quadratic Hermite polynomial interpolation with post-processing via a standard Lagrange polynomial interpolation. %Such an explicit function offers a third-order approximation to the singular part of the underlying solution for interfaces of any shape. It is also essential for both stability and convergence of the proposed method. Moreover, we provide rigorous error analysis to show that the scheme can achieve a second-order convergence rate for the approximation of the solution and its gradient. Ample numerical examples with complex interfaces demonstrate the expected convergence order and robustness of the method.
These approaches usually employ penalization on the boundary interface and/or weak enforcement of the boundary conditions and data %often combined with 
usually supported by %are often combined with 
a level set geometry description, %concepts 
\cite{StaRo03}. %[53]. %Although %to describe the interfaces accurately. Nonetheless, in their practical implementation, the interface is typically represented via piecewise smooth
%polynomial approximations to the level sets.
%
%
%The interior penalty discontinuous Galerkin (IP-dG) approach appears to allow
%for extreme generality with regard to element shapes/geometries. Indeed, in contrast to aforementioned families of general mesh methods, IP-dG can handle arbitrary number of faces per element with solid theoretical backing involving provable
%stability and convergence results; see [19, 20] for details. This property becomes
%extremely relevant upon realising that IP-dG (as well as other classical dG methods, such as LDG) associate local numerical degrees of freedom to the elements
%only, and not to other geometrical entities such as faces or vertices. As such,
%the nature and dimension of the local discretization space is independent of the
%number of vertices/faces. The latter observation implies also naturally a form of
%complexity reduction: classical total degree ("P--type") local polynomial spaces in
%physical coordinates are admissible on box-type or highly complex element shapes
%[21, 18, 20]. 

In the present work, we investigate the applicability of the interior-penalty discontinuous Galerkin method discretizing steady Stokes flow cases onto meshes with boundaries considering general, essentially arbitrarily shaped element shapes in the sense of \cite{Cangiani2022hpVersionDG}, which allow attaining %\TODOr{much} 
smaller errors compared with other competitive finite element methods \TODO%r
{e.g. comparing with the unfitted dG approaches of \cite{AreKarKat22}}, % in coarser meshes, 
fact verified numerically in Section \ref{section6}. Furthermore, our analysis allows for curved element shapes %, \cite{Cangiani2022hpVersionDG}, 
without the use of any non-linear elemental maps. 
%\redhot{We define proper discontinuity penalization, depended on the boundary element  -arbitrarily defined- shape, but %essentially independent on small shape variations.}
%\TODOr{
\TODO%ek
{We use extended} %extensions of %classica}
\TODO%ek
{$hp$-  trace and inverse estimates to the arbitrary shape of boundary elements %element shapes. A further new H1 -- L2-type inverse estimate on essentially arbitrary element shapes enables the proof 
and we prove the inf-sup stability of the method in proper, to the prescribed method, norm. A priori error bounds for the resulting method are given under very weak %mild structural
 assumptions.% restricting the magnitude of the local curvature???? of element boundaries on the boundary area.
} Numerical experiments are also presented,
indicating the efficiency of the proposed framework.

This manuscript is structured as follows: we start with the Stokes flow model problem and the necessary preliminaries in Section \ref{section2}. The various components of the interior penalty stabilized arbitrary boundary elements dG discretization %based on equal higher order discontinuous velocities and pressures 
are discussed in subsection \ref{22} and we recall trace inverse estimates that are pivotal in the proof of the stability of IP-dG methods employed with %\TODOr{the crucial independence of the element shape considering} 
the constants that appear and affect several inverse, stability, and error estimates. Approximation results needed for the analysis of the method are collected in Section \ref{section3}. Section \ref{section4} is devoted to stability estimates and the derivation of the discrete inf--sup condition, followed by a--priori error estimates in Section \ref{section5}. Concluding, the aforementioned analysis is numerically verified with tests in Section \ref{section6} that \EK{depict} the \EK{optimal} theoretical $hp$-convergence rates and the $hp$-accuracy of the method.
%
%Trace inverse estimates
%are crucial in the proof of stability of IP-dG methods and, simultaneously, 

It is noted that this work \EK{determines an % novel
 approach} where many expensive penalization terms can be omitted and appear beneficial  especially, for fluid systems where additional penalization is needed.  %that the so-called discontinuity-penalization parameter for a given mesh. This 
 These results \TODO%ek
 {are}  %\TODOr{appear more} 
 relevant to %\TODOr{on} 
 the $hp$-version framework \TODO%ek
 {and geometrical challenges} 
 and verify that  %insufficient penalization results in 
there is no loss of stability and accuracy, as traditionally appears e.g. in cases with excessive and/or  insufficient penalization that typically results in accuracy loss. 
%
%Also, we prove new hp-version Linfty  L2 and H1 -- L2 inverse inequalities on extremely general curved domains. 
%
%Particular care has been given so that these new inverse estimates are"shape-robust", in the sense that there is no hidden dependence of the element shape in the constants. We believe that these extensions of known inverse estimates to be of independent interest, due to their frequent use in the analysis of finite element methods.}
%
%The new inverse estimates are combined with ideas from the analysis of polytopic dG methods \cite{(Georgoulis17)}%[20]
%, resulting in significant generalization of the results presented therein. More specifically, 
%by relaxing certain earlier coverability assumptions,
%
% (postulating the ability to cover tightly general-shaped elements by unions of simplices
%of similar size, cf. [20  (\cite{Georgoulis17}), Definition 10]) as well as by proving a new stability result for
%norms of polynomials under domain perturbations (Lemma 4.14),
%
 %we prove stability and a new hp-version a priori error analysis for the IP-dG method on essentially
%arbitrary element shapes. 
%
 %The inf-sup result justifies also the good stability properties of the method in convection-dominated problems. 
%
Further, we report that the theoretical tools presented are adapted to a Nitsche-type formulation. % of unfitted grid methods, %. %To emphasize the mesh-generality of the proposed approach, we
%shall refer to the framework presented below as discontinuous Galerkin method on
%essentially arbitrarily-shaped elements (dG-EASE).
%The remainder of this work is organised as follows. Upon describing the advectiondiffusion-reaction model problem in Section 2, we introduce the hp-version interior
%\TODOr{and the a priori error analysis follows.}
%techniques of %closely %%the proof from 
%\cite{CanDongGeoHou16}, %[18]: 
%upon establishing 
%\cite{Georgoulis17}.
The inf-sup stability result of the method in a Stokes-like norm is proved %standard Strang-type arguments with
on  $hp$-approximation that will also
 lead to an error bound, \TODO%ek
 {an a priori error estimate} and optimal convergence rates.
The theoretical developments presented regarding stability and a-priori error analysis of IP-dG methods considering %allowing for 
 %more
general arbitrarily shaped %curved
elements%shapes
, to the best of our knowledge  \TODO%r
are new for Stokes systems%+++>? i na bgei to epomeno, %and based on extensions of known
%inverse and trace inequalities, % for the interested reader we refer to
% \cite{Cangiani2018AdaptiveDG}, % (the hp-version trace inverse estimates have been presented in \cite{Cangiani2018AdaptiveDG}) %More specifically, we extend the}
% 
%, % [23], 
%
.%, see also \cite{WuXia18} %[43] 
%for an earlier related result.
%
\begin{figure}[ht] %[scale=0.35]
\centering
\includegraphics[trim={4cm 3.8cm 4cm 3.6cm},clip,scale=0.27] {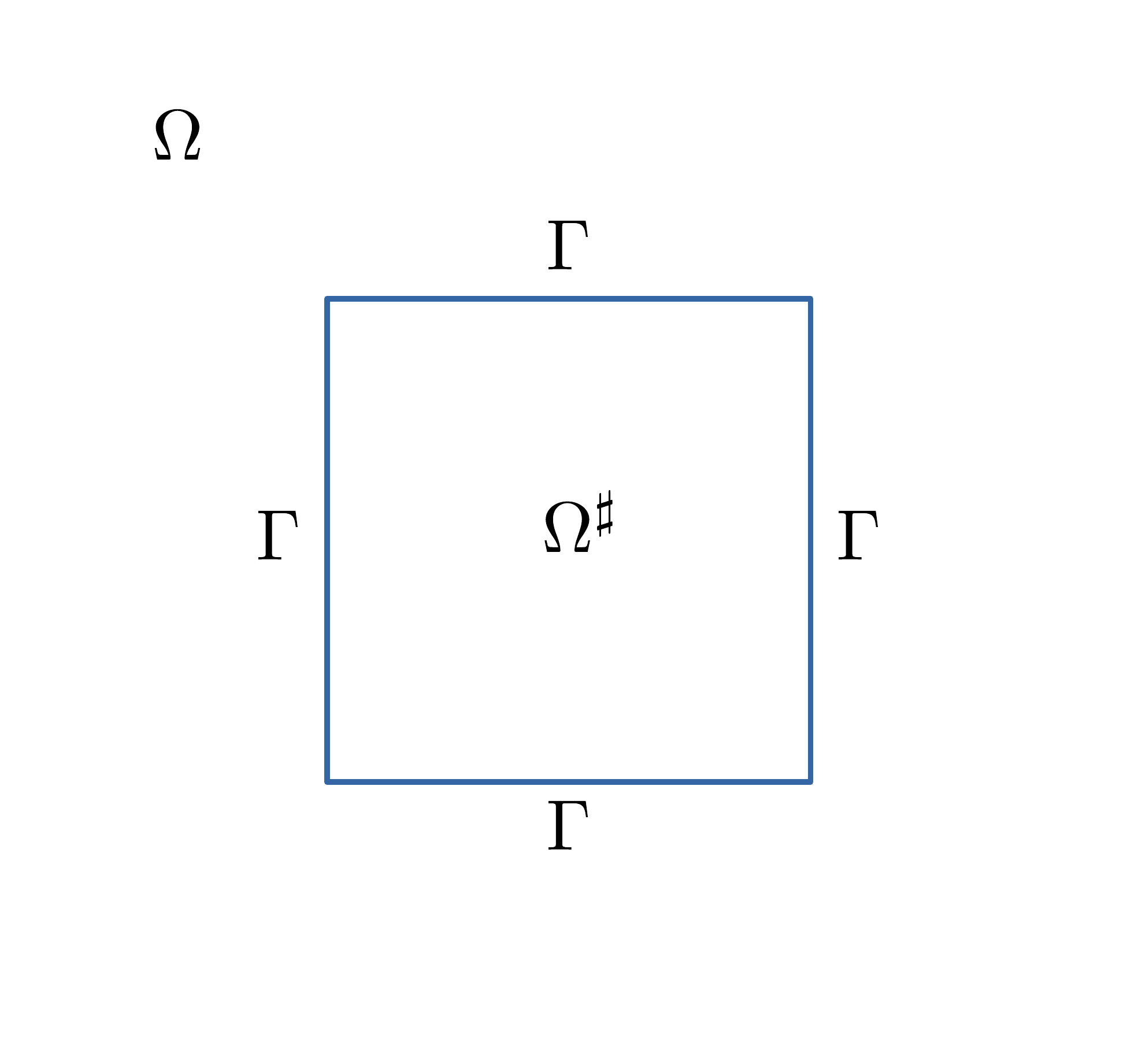}\hskip22.5pt
\includegraphics[trim={4cm 3.8cm 4cm 3.6cm},clip,scale=0.27] {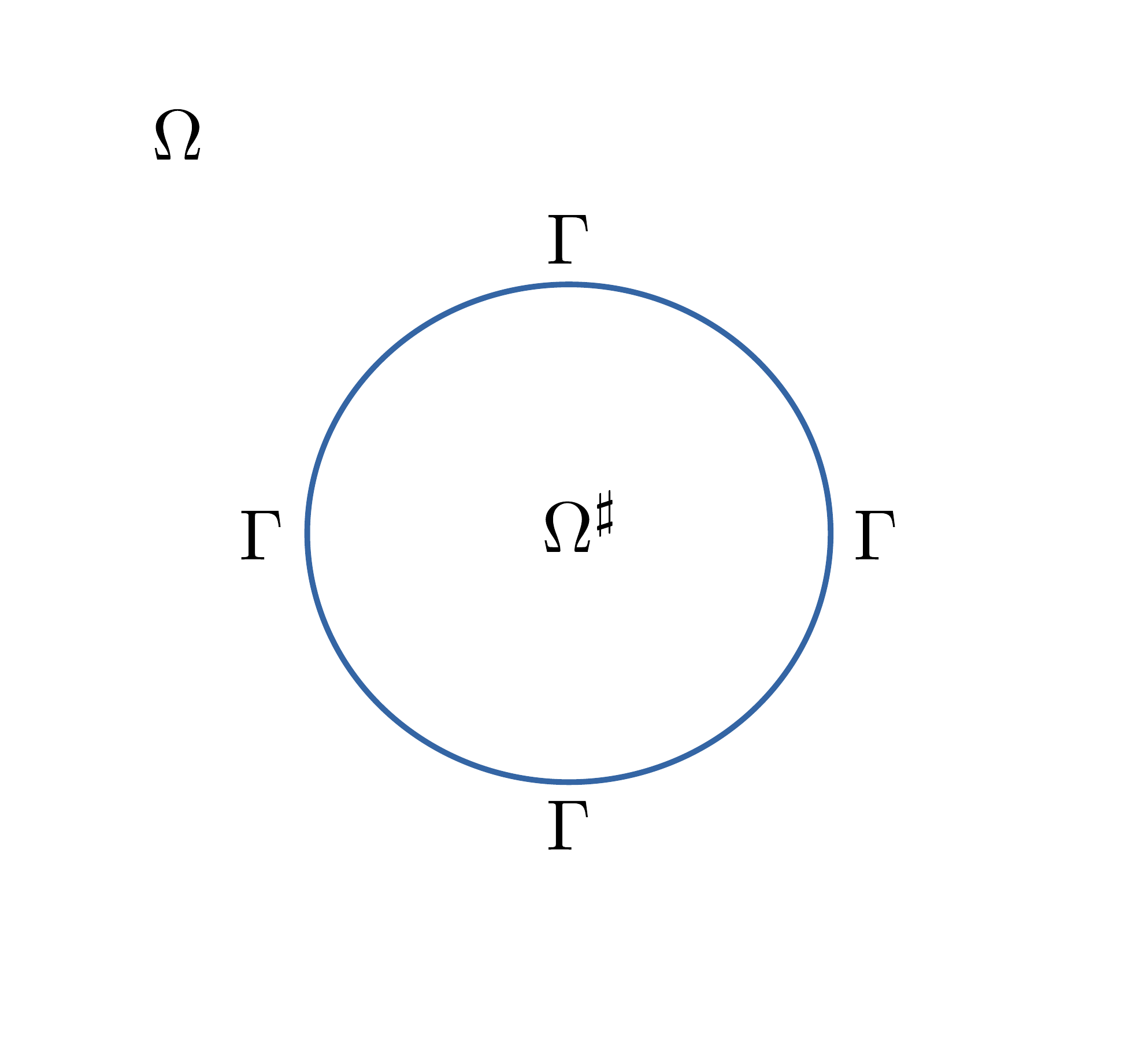}
\caption{{\greenX{Example geometries $\Omega^{\sharp}$ and their boundaries $\Gamma$}}. %and the auxiliary domain $\Omega$.
}%$\Omega^{{%\mathcal{T}}}}
\label{geometry}
\end{figure}
\begin{figure}[ht]%[scale=0.65]
\includegraphics[scale=0.55] {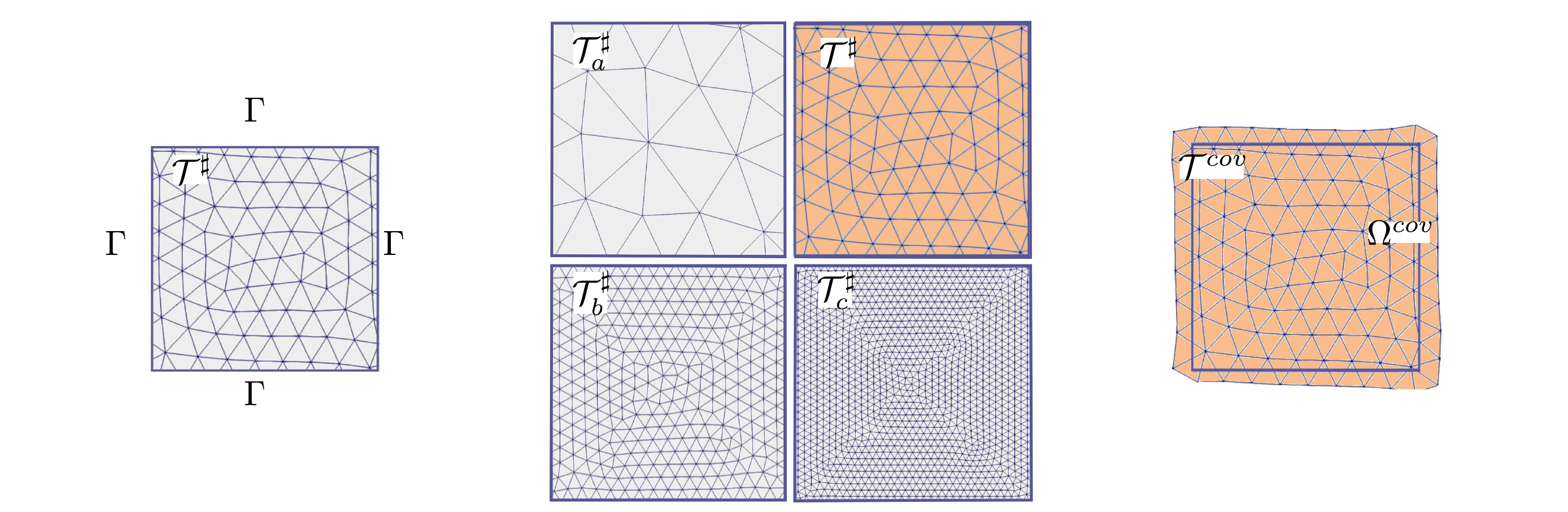} \\
\includegraphics[scale=0.55] {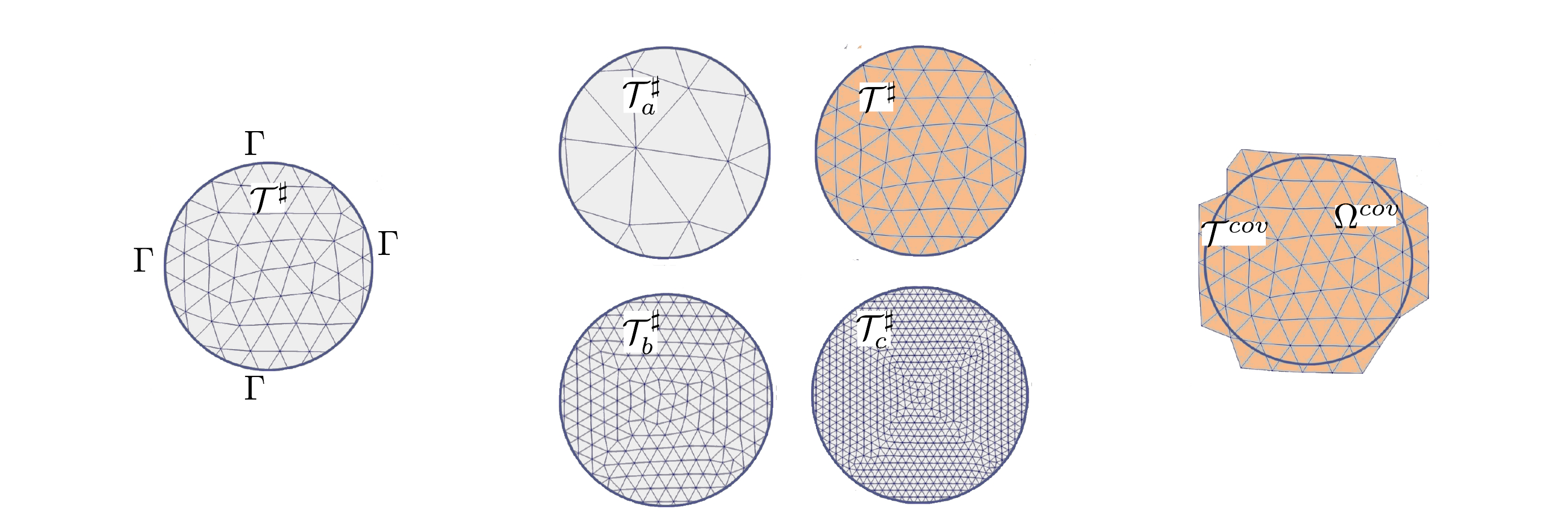} 
\caption{\greenX{Meshes} $\mathcal{T}^{\sharp}$ based on  arbitrarily-shaped boundary elements, the covering \greenX{meshes} $\mathcal{T}^{cov}$ from Definition \ref{def:covering_domain}, examples of refined  \greenX{Figure \ref{geometry}'s  geometries} $\Omega^\sharp $ tessellations $\mathcal{T}_a^\sharp$, $\mathcal{T}_b^\sharp$, $\mathcal{T}_c^\sharp$ and the covering \greenX{domains}, $\Omega^{cov}$.}
\label{mesh_geometry}
\end{figure}
\section{%The 
Model  problem
% and preliminaries
} \label{section2}
\subsection{Problem formulation} The steady Stokes equations for an incompressible viscous fluid confined in an open, bounded domain $\Omega^\sharp \subset \mathbb{R}^d$ ($d=2, 3$) with Lipschitz boundary $\Gamma=\partial \Omega^\sharp$  can be expressed in the form
%Find velocity and pressure $\left(\mathbf{u}, p\right)\in (H^1(\Omega))^d?L^2(\Omega)$  
 \begin{eqnarray} \label{The Stokes Problem}
 - \Delta  \mathbf{u}+\nabla p=\mathbf{f}%\quad\,\,\,\,\, \text{in $\Omega$},
% \nonumber\\
 \text{ and } \nabla \cdot \mathbf{u}=0  \quad \text{in $\Omega^\sharp$}, 
 %\\ 
\quad \text{ with } \quad \mathbf{u}=0 \quad \text{on $\Gamma$}.%\nonumber
\end{eqnarray}
Here $\mathbf{u}=(u_1, \dots, u_d):\Omega^\sharp  \rightarrow  \mathbb{R}^d$ ($d=2,3$) and $p:\Omega^\sharp \rightarrow  \mathbb{R}$ denote the velocity and pressure fields, and $\mathbf{f} \in \left[L^2(\Omega^\sharp)\right]^d$ is a forcing term. %Note  $\Delta \mathbf{u} =[\Delta u_1, \dots, \Delta u_d]$.
 Since the pressure is determined by (\ref{The Stokes Problem}) up to an additive constant, we assume $\int_{\Omega^\sharp}p dx= 0 $ to uniquely determine $p$. Hence, in the following, we will consider for pressure the standard space
$
L_0^2(\Omega^\sharp):=\left\{q \in L^2(\Omega^\sharp):\int_{\Omega^\sharp}q \,dx=0 \right\}
$ 
of square-integrable functions with zero average over $\Omega^\sharp$.

%o satisfy the incompressibility condition, we assume the compatibility condition 
%$$\int_{\Gamma}\mathbf{n} \cdot \mathbf{g} ds = 0,$$
%where $\mathbf{n}$ denotes the outward--pointing boundary normal.
%
Defining for all $\mathbf{u}, \mathbf{v} \in  V^\sharp :=[H_0^1(\Omega^\sharp)]^d$ and $p\in Q^\sharp:= L_0^2(\Omega^\sharp)$  the bilinear forms 
\begin{align}
a(\mathbf{u}, \mathbf{v})= \int_{\Omega^\sharp}\nabla \mathbf{u} : \nabla \mathbf{v}\,d\bm{x},  \quad %=(  \nabla \mathbf{u}, \nabla \mathbf{v})_{\Omega^\sharp} - (\mathbf{n}\cdot \nabla \mathbf{u}, \mathbf{v})_{\Gamma},\\
b(\mathbf{v}, p)= -\int_{\Omega^\sharp}p  \nabla \cdot \mathbf{v}\,d\bm{x}, \label{forms_cont}%= - (p, \nabla \cdot \mathbf{v})_{\Omega^\sharp}+ (p,  \mathbf{n} \cdot  \mathbf{v})_{\Gamma}.
\end{align} 
a weak solution to (\ref{The Stokes Problem}) is a pair   $(\mathbf{u}, p) \in [H_0^1(\Omega^\sharp)]^d \times          L_0^2(\Omega^\sharp)=V^\sharp\times Q^\sharp$, such that
\begin{equation}\label{weak}
A(\mathbf{u}, p ; \mathbf{v}, q)=\int_{\Omega^\sharp}\mathbf{f} \cdot \mathbf{v}\,d\bm{x}, \ \ \text{for all test functions} \ \   (\mathbf{v}, q) \in V^\sharp\times Q^\sharp,
\end{equation}
with 
$$
A(\mathbf{u}, p ; \mathbf{v}, q)=  a(\mathbf{u}, \mathbf{v}) +b(\mathbf{u}, q)+b(\mathbf{v}, p).
$$
The well--posedness of (\ref{weak}) is standard \cite{DiPE12}.

\subsection{Arbitrarily shaped  discontinuous Galerkin method on the boundary}\label{22}
Implementation of arbitrarily shaped boundary elements discontinuous Galerkin method for the discretization of (\ref{weak}) \TODO%r
{relies on {a %fixed
\TODO%ek
{covering} domain} ${\Omega}^{cov}$ which contains the \EK{true} geometry} $\Omega^{\sharp}$, see Figures \ref{geometry}, \ref{mesh_geometry}. \TODO%r
{Let ${\mathcal{T}^{cov}}$ be the corresponding covering} shape-regular %quasi-uniform
mesh of $\Omega^{cov}$ %{\EK{with the assumptions of \cite{BuHa14_III}}}
and $\mathcal{T}^\sharp$  is the mesh corresponding to $\Omega^\sharp$. The  {\itshape{active}} mesh 
$$
{\mathcal{T}^\sharp}=\left\{\mathcal{K}\cap \Omega^\sharp ; \text{ for all } \mathcal{K} \in \TODO%r
{{\mathcal{T}^{cov}}} \text{ with } \mathcal{K}\cap \Omega^\sharp\neq \emptyset\right\}
%{\mathcal{T}^\sharp}=\left\{T\in {\mathcal{T}}: T\cap \Omega\neq \emptyset\right\}
$$  
%%%%%% 1st version
is the  \TODO%r
{minimal submesh of $ \mathcal{T}^{cov}$} which covers $\Omega^\sharp$ \TODO%ek
{employing polytopic, e.g. polygonal boundary interface elements} and is \TODO%ek
{actually} {\itshape{fitted}} to its boundary $\Gamma$: {we allow mesh boundary elements $K \in \mathcal{T}^\sharp$ which are
arbitrarily shaped and with very general interfaces. In the present work, numerical experiments consider  boundary elements as \EK{Lipschitz} curved elements and with only curved facet the one that coincides to the corresponding part of the boundary $\Gamma$, see Figures \ref{geometry}, \ref{mesh_geometry} %\TODOek{and Definition \ref{def:covering_domain}}
or the more general case of Figure \ref{fig:curved_boundary_elements}. However, one could also employ general interfaces with neighboring elements, \cite{Georgoulis17,Dong19,dong2022hp}}. 

Finite element spaces for $\mathbf{u}$ and $p$ will be built upon the domain $\Omega^\sharp
=\bigcup_{K \in \mathcal{T}^\sharp}K$ which corresponds to  $\mathcal{T}^{\sharp}$.  
\EK{ The mesh skeleton $%\mathcal{F}^\sharp := 
\bigcup_{K \in \mathcal{T}^\sharp} \partial K$ --including the curved boundary facets-- is subdivided into the internal part
$$
\mathcal{F}_{int}^{\sharp}=\left\{F=K^{+}\cap K^{-}: K^{+}, K^{-} \in \mathcal{T}^{\sharp} \text{ and } F\nsubseteq\Gamma\right\}
%\text{\redhot{tsampa anisixoysa afou mono to synoro de ua parei kai ta F cap omega diesi OLA OK}}
$$
which actually denotes the set  of interior faces in the active mesh and the boundary part  
${\mathsf{\Gamma}}\equiv \partial\Omega^\sharp$. 
}
\EK{We denote by
%To ensure stabilization in the bulk of the domain, as in \cite{BaBo04, MLLR12}, we introduce the extra term
%\begin{equation}\label{c}
%c_h(\mathbf{u}_h, p_h; q_h)= \gamma_0 \TODO{\sum_{T\in \mathcal{T}_h}}\int_{T}h_T^2 (-\Delta\mathbf{u}_h+\nabla p_h) \nabla q_h+\gamma_1\sum_{F \in \mathcal{F}_h^{int}} \int_{F\cap \Omega} h_F [\![  p_h ]\!][\![   q_h  ]\!]
%\end{equation}
 $h_K=diam(K)$ the \itshape local \normalfont mesh size for $K\in \mathcal{T}^\sharp$ boundary elements and $h_F=\min\left\{h_{K^{+}}, h_{K^{-}}\right\}$ for $F=K^{+}\cap K^{-} %\in \mathcal{F}^\sharp_{int}
$.}

We choose to enforce boundary conditions at $\Gamma$ to be weakly satisfied through Nitsche's method. We highlight that we do not employ coercivity recovery techniques applied over the \TODO%r
{covering %whole computational 
domain $\Omega^{cov}$}\EK{, e.g.} \TODO%ek
{by means of additional ghost penalty terms which act on the gradient} jumps \EK{--usually higher order--} in the boundary zone; see, for instance, \cite{BH2012, BuHa14_III, MLLR12, KBR19}. Instead, the ${\mathcal{T}}^{\sharp}$ coercivity is ensured \EK{following the} approach of \cite{Georgoulis17,Dong19}. 
%The submesh consisting of all cut elements  is denoted
%$$
%\TODO{
%{G}_h:=\{T\in \mathcal{T}_h:T\cap\Gamma\neq\emptyset\}
%}
%$$
%and the relevant set of faces upon which ghost penalty will be applied is given by
%$$
%\TODO{\mathcal{F}_{G}:=\left\{F: F \text{ is a  face of } T\in G_h, F\notin \partial\Omega_{\mathcal{T}}\right\}.
%}
%$$

To define the arbitrarily \EK{shape discontinuous} Galerkin discretization for the Stokes problem (\ref{weak}), %we consider equal--order,
we employ the element-wise discontinuous  polynomial  finite elements for pressure and velocity spaces of order $\mathfrak{p}\geq 1$:
\begin{align*}
 V_h^\sharp \equiv S^\mathfrak{p}_{\mathcal{T}^\sharp,{\mathbf{u}}_h}
 &:=\left\{w_h\in \left(L^2(\Omega^{\sharp})\right)^d: w_h|_{K}\in \left(\mathcal{P}^{\mathfrak{p}}(K)\right)^d, K \in \mathcal {T}^{\sharp}\right\} \quad (d=2,3)\\
Q_h^\sharp \equiv S^\mathfrak{p}_{\mathcal{T}^\sharp,p_h}
&:=\left\{w_h\in L_0^2(\Omega^\sharp): w_h|_{K}\in \mathcal{P}^{\mathfrak{p}-1}(K), K \in \mathcal {T}^{\sharp}\right\}.
\end{align*} 
%
%\EK{
%We define the finite element space $S_{\mathcal{T}}^p$ with respect to $\mathcal{T}$ and $p$ by
%$$S_{\mathcal{T}}^p := \{u \in L^2 (\Omega) : u|_K \in {\mathcal{P}}^{p_K}(K), K \in \mathcal{T} \}$$
%%
The broken Sobolev space \blueX{$H^\mathfrak{p}%s
(\Omega^\sharp, \mathcal{T}^\sharp )$}, with
respect to the subdivision $\mathcal{T}^\sharp$ up to \blueX{composite order 
$\mathfrak{p}%s
$},
 is defined as
$$ \blueX{H^\mathfrak{p}%s
}(\Omega^\sharp, \mathcal{T}^\sharp) = \{w \in L^2 (\Omega^\sharp) : w|_K \in \blueX{H^{\mathfrak{p}%s
}(K)} \
\forall K \in \mathcal{T}^\sharp \}.$$
%}\blueX
%{%Finally, we define the broken gradient $\nabla_T v$  of a function $v \in L^2(\Omega)$ with $v|_K \in H^1(K)$, for all $K \in \mathcal{T}$, element-wise by $(\nabla_T v |_K := \nabla(v|_K)$.
%}
%}

 {It is important to mention that %in an abuse of notation 
whenever {the notation} $\nabla v$ is used for functions that lay in the discontinuous Galerkin space, {i.e. $w\notin H^1({\Omega}^\sharp)$}, will correspond to the broken gradient, such that, $(\nabla w)|_K=\nabla(w|_K)$ for all $K\in\mathcal{T}^\sharp$. 
So, the broken gradient $\nabla_{\mathcal{T}^\sharp} w$ of a function $w \in L^2 (\Omega^\sharp)$ with $w|_K \in
H^1 (K)$, for all $K \in \mathcal{T}$, is defined element-wise by $(\nabla_{\mathcal{T}^\sharp} w )|_K := \nabla(w|_K )$.} When $F \subset \mathsf{\Gamma}$, it is $\llrrparen{w} =[\![w]\!] = w$ and $\llrrparen{\mathbf{w}}=[\![\mathbf{w}]\!] = \mathbf{w}%n
$. % with $n$ denoting the outward unit normal to the boundary $\Gamma$.
 The same applies to the broken divergence operator $\nabla\cdot w$ defined element-wise.
%}
Moreover, recall the definition 
$$ 
\llrrparen{w}:=\frac{1}{2} \left(w^{+}+w^{-}\right), \quad \quad  \llrrparen{\mathbf{w}}:=\frac{1}{2} \left(\mathbf{w}^{+}+\mathbf{w}^{-}\right),
$$
of the {\em average} operator $\llrrparen{\cdot}$ across an interior face $F$ for $w$, $\mathbf{w}$   scalar and vector-valued functions  on $\mathcal{T}^{\sharp}$ respectively, where $w^{\pm}$ (resp. $\mathbf{w}^{\pm}$) are the traces of $w$ (resp. $\mathbf{w}$) on $F=K^{+}\cap K^{-}$ from the interior of $K^{\pm}$. More precisely, with $w^{\pm}(\mathbf{x})=\lim_{t\rightarrow 0^{+}}w(\mathbf{x}\pm t\bf{n_F})$ for $\mathbf{x} \in F$ and  $ { \bf n_F}$  we denote the outward-pointing unit normal vector to $F$ and  with $  \mathbf{n}_{\Gamma}$ the outward unit normal to the boundary $\Gamma$. The  {\em jump} operator $ [\![\cdot ]\!]$ across $F$  is defined respectively by 
$$ 
 [\![w]\!]:=w^{+}-w^{-}%{=w^{+}n^+-w^{-}n^-}
 , \quad \quad   [\![\mathbf{w} ]\!]:=\mathbf{w}^{+}-\mathbf{w}^{-}%{=\mathbf{w}^{+}n^+-\mathbf{w}^{-}n^-}
 .
$$
{
\begin{figure}
%\centering
\begin{minipage}{\textwidth}
\centering 
{\footnotesize{(i)}}\hskip1.0pt%\ 
\includegraphics[width=0.465\textwidth]{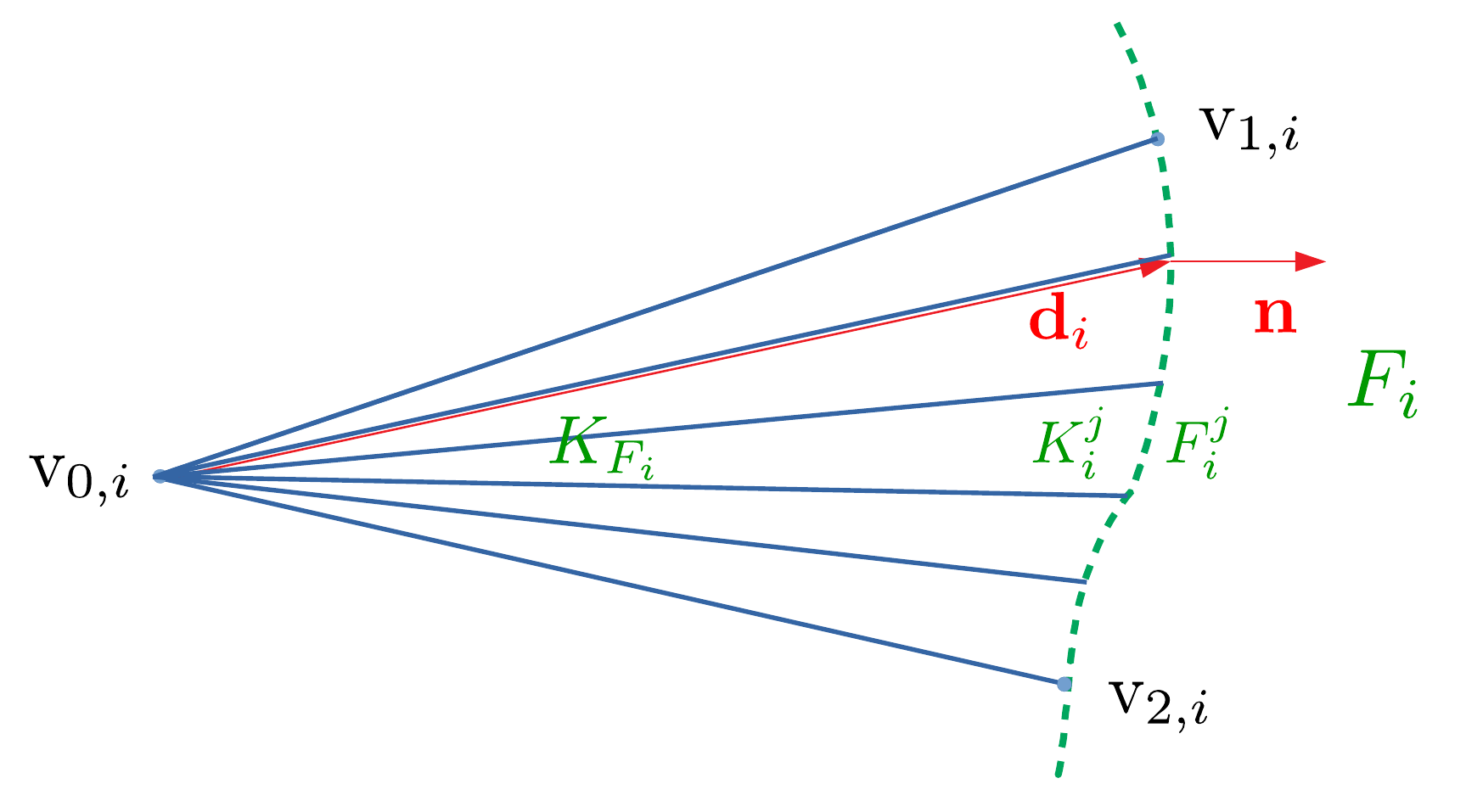}~{\footnotesize{(ii)}}\hskip1.0pt~\includegraphics[width=0.465\textwidth]{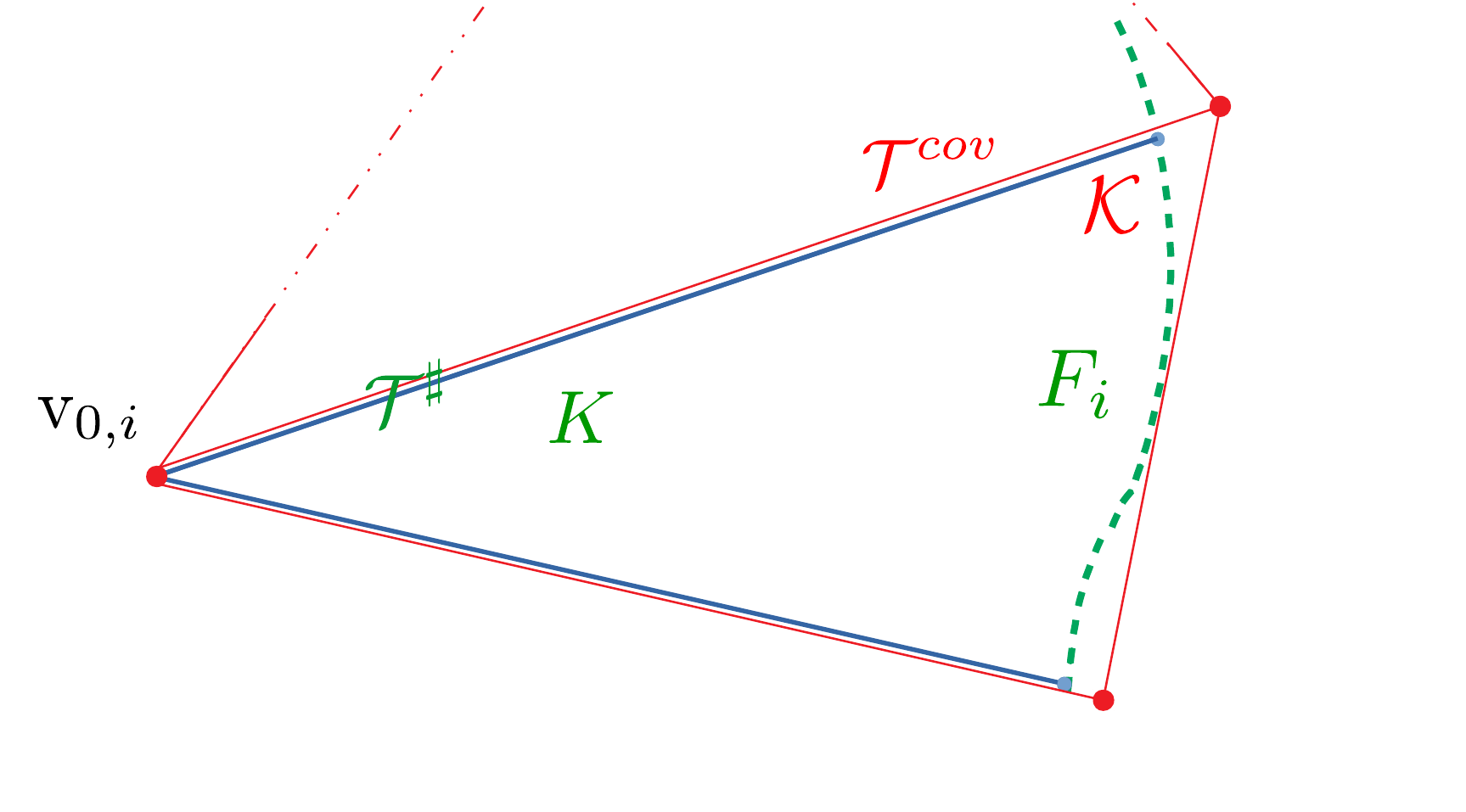}
%\\
%\centering
%{\footnotesize{(iii)}}\hskip1.0pt%\ 
%\includegraphics[width=0.7\textwidth]{images_dg/arbitrary_element_covering_mathcal_T_stare2.pdf}
%\footnotesize{(i)}}\hskip1.0pt
\end{minipage}
\caption{(i) Curved boundary elements for $d = 2$ with one curved face and  vertices $\mathbf{v}_{k,i}$ and the unit outward normal vector to $F_i$ at $x \in F_i$ where $K_{F_i}$ is star-shaped and 
 (ii) the covering mesh $\mathcal{T}^{cov}$, and the mesh $\mathcal{T}^\sharp$ corresponding to the truth geometry with arbitrary shape boundary elements.   %(iii)...
 }
\label{fig:curved_boundary_elements}
\end{figure}
} 
\EK{We} are now ready to formulate a discrete counterpart of (\ref{weak}) through \EK{a discontinuous} Galerkin method. The symmetric interior penalty discretizations of the diffusion term and the pressure--velocity coupling in (\ref{forms_cont}) lead to the bilinear forms
\begin{align*}
a_h(\mathbf{u}_h, \mathbf{v}_h) = \int_{\Omega^\sharp} \nabla \mathbf{u}_h: \nabla \mathbf{v}_h \,d\bm{x} - \sum_{\redhot{F \in \mathcal{F}^\sharp_{int} \cup {\mathsf{\Gamma}}}
}\int_{F%\cap \Omega^\sharp
}\left(\llrrparen{\nabla \mathbf{u}_h}\cdot \bf{n_F} [\![ {\mathbf{v}}_h  ]\!] + \llrrparen{ \nabla \mathbf{v}_h}\cdot \bf{n_F} [\![ \mathbf{u}_h  ]\!] \right)d\gamma 
\\
- \int_{\Gamma}\mathbf{u}_h\nabla \mathbf{v}_h \cdot  \mathbf{n}_{\Gamma}\,d\gamma -\int_{\Gamma}\mathbf{v}_h \nabla \mathbf{u}_h \cdot \mathbf{n}_{\Gamma}\,d\gamma +
\int_{\Gamma} {\magenta{\sigma}} \mathbf{u}_h\mathbf{v}_h\,d\gamma + 
\sum_{\redhot{F\in \mathcal{F}^\sharp_{int}\cup {\mathsf{\Gamma}}}}
\int_{F%\cap \Omega^\sharp
}{\magenta{\sigma}}[\![ \mathbf{u}_h  ]\!] [\![ \mathbf{v}_h  ]\!]\,d\gamma , 
\\
b_h(\mathbf{v}_h, p_h)= - \int_{\Omega^\sharp}p_h\nabla \cdot \mathbf{v}_h\,d\bm{x}+\sum_{\redhot{F\in \mathcal{F}^\sharp
_{int}\cup {\mathsf{\Gamma}}}}
\int_{F%\cap \Omega^\sharp
} [\![ \mathbf{v}_h  ]\!]\cdot \mathbf{n_F} \llrrparen{  p_h}d\gamma +\int_{\Gamma} \mathbf{v}_h \cdot \mathbf{n_{\Gamma}}  p_h\,d\gamma,
\end{align*}
where $\sigma>0$ is the 
 discontinuity-penalization function in $ \redhot{L^\infty(\mathcal{F}^\sharp _{int}\cup {\mathsf{\Gamma}}}
)$ that affects the stability of the method as well as the approximation quality and will be investigated below.
This symmetric interior penalty parameter  in the definition of $a_h(\cdot, \cdot)$ will be  sufficiently large  in a sense that will be made precise later, see Lemma \ref{a_coerc} and its proof below.

We note that the latter formulation's disadvantage is that it is not well defined for $H^1(\Omega^\sharp)$ regularity, e.g. traces of functions defined in $L^2(\Omega^\sharp)$ are not well defined in $\mathcal{F}_{int}^{\sharp}$. The latter issue affects the terms $\llrrparen{\nabla(w)}$, and $\nabla{w}\cdot \mathbf{n}_F$ in the sense that they are not well defined in $H^1(\Omega^\sharp)$.
This \EK{causes the need of} additional regularity while the Galerkin orthogonality cannot be derived explicitly. In order to achieve optimal a priori error estimates, under the presence of terms such $\llrrparen{\nabla w}|_F$, ${\nabla w}\cdot\mathbf{n}_F|_F$ which \EK{may} involve ${\llrrparen{\nabla(w-\pi_\mathfrak{p} w) }}_F$ and ${\nabla (w-\pi_\mathfrak{p} w)}\cdot\mathbf{n}_F|_F$, where $\pi_\mathfrak{p} w$ is an operator $\pi_\mathfrak{p} : H^{l_K}(
\mathcal
{K}) \to \mathcal{P}_\mathfrak{p}(
\mathcal
{K})$ an approximation of $w$ for $l_k\ge 0$ \EK{will be} introduced in Lemma \ref{lem:hp_projection_error_bounds} and it is estimated optimally. We note %highligh
 at this point that  the $H^1(F)$ semi-norm for an $hp$- a priori approximation error bound 
would require $W^{1,\infty}$ norm error bounds which also require further regularity, see for more details the work of \cite{Georgoulis17}. To avoid the latter \EK{issue} we employ proper bilinear form extensions. In particular we introduce the orthogonal $L^2$-projection in the FEM space $S^{\mathfrak{p}}_{\mathcal{T}^\sharp,\cdot}$, e.g. $\mathbf{\Pi}_{L^2}:(L^2(\Omega^\sharp))^d \to (S^{\mathfrak{p}}_{\mathcal{T}^\sharp,\cdot})^d$
concluding in the variational form:
\begin{align*}
\widetilde{a}_h(\mathbf{u}_h, \mathbf{v}_h) = \int_{\Omega^\sharp} \nabla \mathbf{u}_h: \nabla \mathbf{v}_h\,d\bm{x} - \sum_{\redhot{F \in \mathcal{F}^\sharp_{int} \cup {\mathsf{\Gamma}}}}
\int_{F}\left(\llrrparen{  {\magenta{\mathbf{\Pi_{L^2}}(}}\nabla \mathbf{u}_h)}\cdot \bf{n_F} [\![ {\mathbf{v}}_h  ]\!] + \llrrparen{ {\magenta{\Pi_{L^2}(}}\nabla \mathbf{v}_h)}\cdot \bf{n_F} [\![ \mathbf{u}_h  ]\!] \right)d\gamma \\
- \int_{\Gamma}\mathbf{u}_h{\magenta{\mathbf{\Pi_{L^2}}(}}\nabla \mathbf{v}_h ) \cdot  \mathbf{n}_{\Gamma}\,d\gamma -\int_{\Gamma}\mathbf{v}_h  {\magenta{\mathbf{\Pi_{L^2}}(}}\nabla \mathbf{u}_h) \cdot \mathbf{n}_{\Gamma}\,d\gamma+ %\beta h^{-1}
\int_{\Gamma} {\magenta{\sigma}} \mathbf{u}_h\mathbf{v}_h\,d\gamma + %\beta h^{-1}
\sum_{\redhot{F\in \mathcal{F}^\sharp_{int}\cup {\mathsf{\Gamma}}}}
\int_{F}{\magenta{\sigma}}[\![ \mathbf{u}_h  ]\!] [\![ \mathbf{v}_h  ]\!]\,d\gamma, \\
b_h(\mathbf{v}_h, p_h)= - \int_{\Omega^\sharp}p_h\nabla \cdot \mathbf{v}_h\,d\bm{x}
+\sum_{\redhot{F\in \mathcal{F}^\sharp_{int}\cup {\mathsf{\Gamma}}}
}
\int_{F} [\![ \mathbf{v}_h  ]\!]\cdot \mathbf{n_F}\llrrparen{p_h}d\gamma +\int_{\Gamma} \mathbf{v}_h \cdot \mathbf{n_{\Gamma}}  p_h\,d\gamma
\end{align*}
respectively. For future reference,  note that element-wise integration by parts in the previous forms yields the equivalent formulations
\begin{align}
\widetilde{a}_h(\mathbf{u}_h, \mathbf{v}_h)= - \int_{\Omega^\sharp} \nabla\nabla%\Delta 
\mathbf{u}_h \cdot  \mathbf{v}_h\,d\bm{x} + \sum_{\redhot{F\in \mathcal{F}^\sharp_{int}\cup {\mathsf{\Gamma}}
}}
\int_{F} [\![ {\magenta{\mathbf{\Pi_{L^2}}(}}\nabla \mathbf{u}_h)  ]\!] \cdot \mathbf{n_F} \llrrparen{\mathbf{v}_h}\,d\gamma
\notag\\
-\sum_{\redhot{F \in \mathcal{F}^\sharp_{int}\cup {\mathsf{\Gamma}}
}
}\int_{F} \llrrparen{  {\magenta{{\mathbf{\Pi_{L^2}}}(}}\nabla {\mathbf{v}}_h)}\cdot {\bf{n_F}} [\![ {\mathbf{u}}_h  ]\!] \, d\gamma \notag  
-\int_{\Gamma}{{\mathbf{u}}_h{\magenta{{\mathbf{\Pi_{L^2}}}(}}\nabla {\mathbf{v}}_h)\cdot  {\mathbf{n}_{\Gamma}}} \, d \gamma 
\\ 
+ \int_{\Gamma} {\magenta{\sigma}} \mathbf{u}_h\mathbf{v}_h\,d\gamma 
+ \sum_{\redhot{F\in \mathcal{F}^\sharp_{int}\cup {\mathsf{\Gamma}}
}
}\int_{F} {\magenta{\sigma}} [\![ \mathbf{u}_h  ]\!] [\![ \mathbf{v}_h  ]\!] \,d\gamma , \label{a_alt}
\end{align}
\begin{align}
b_h(\mathbf{v}_h, p_h)= \int_{\Omega^\sharp}\mathbf{v}_h\cdot \nabla p_h\,d\bm{x} 
- \sum_{\redhot{F\in \mathcal{F}^\sharp_{int}\cup {\mathsf{\Gamma}}
}
}\int_{F} \llrrparen{\mathbf{v}_h }\cdot \mathbf{n_F} [\![ p_h  ]\!]\,d\gamma,\label{b_alt}
\end{align}
which will be useful for asserting the consistency of the method. %\redhot{We have denoted by

Using the aforementioned weak formulation, an arbitrarily shape boundary elements discontinuous Galerkin method for (\ref{weak}) now reads as follows: Find $(\mathbf{u}_h,p_h)\in V^\sharp_h\times Q^\sharp_h$, such that 
\begin{equation}\label{cutdg}
A_h(\mathbf{u}_h, p_h ; \mathbf{v}_h, q_h)%\TODO{+J_h(\mathbf{u}_h, p_h ; \mathbf{v}_h, q_h)}
=L_h(\mathbf{v}_h, q_h), \ \ \text{for all} \ \   (\mathbf{v}_h, q_h) \in V_h^\sharp\times Q_h^\sharp.
\end{equation}
The bilinear and linear forms $A_h$ and $L_h$ are defined by
\begin{align}
A_h(\mathbf{u}_h, p_h ; \mathbf{v}_h, q_h)=  \widetilde{a}_h(\mathbf{u}_h, \mathbf{v}_h) + b_h(\mathbf{u}_h, q_h)+b_h(\mathbf{v}_h, p_h)
%-\TODO{c_h(\mathbf{u}_h, p_h; q_h)}
, %-c_h(\mathbf{u}_h, p_h; q_h),
\,\,\, \text{ and } \,\,\,
%& \ -?(\mathbf{u}_h, \mathbf{n}\cdot \nabla \mathbf{v}_h)_{\Gamma} +\sum_{T \in G_h}?\gamma_1h_T^{-1}(\mathbf{u}_h, \mathbf{v}_h)_{\Gamma_T}, 
L_h(\mathbf{v}_h, q_h)= \int_{\Omega^\sharp} \mathbf{f}\cdot \mathbf{v}_h \,d\bm{x}
%- \TODO{\Phi_h( q_h)}
. \label{A} % \label{L}%-?\int_{\Gamma}\mathbf{g}\left( \mathbf{n}\cdot \nabla \mathbf{v}_h\right) +\sum_{F \in \mathcal F_G}?\beta|F|^{-1}\int_{}\mathbf{g}\cdot \mathbf{v}_h + \\
%&\ \ + ?\beta h^{-1}\int_{\Gamma}\mathbf{g}\cdot  \mathbf{v}_h-\int_{\Gamma}\mathbf{n}\cdot \mathbf{g} q_h. %-\Phi_h(q_h) %\sum_{T \in G_h}?\gamma_1h_T^{-1}(\mathbf{g}, \mathbf{v}_h)_{\Gamma_T}.
\end{align}
We report that in the right-hand side $L_h({\mathbf{v}_h,q_h})$ we have omitted the zero Nitsche boundary terms, as well as, $\widetilde{a}_h(\mathbf{u}_h, \mathbf{v}_h) = {a}_h(\mathbf{u}_h, \mathbf{v}_h)$ when $\mathbf{u}_h, \mathbf{v}_h \in  V_h^\sharp$. 
\section{Preliminaries%Approximation properties
} 
\label{section3}
Next, {\green{we define the discontinuity penalization parameter $\sigma : \mathsf{\Gamma} \cup  \mathcal{F}^\sharp _{int} \to \mathbb{R}$, the }}  standard Sobolev norms and semi--norms on a domain $\mathcal{X}$ for $s\in \mathbb{N}$ will be denoted by $ \|\cdot \|_{s, \mathcal{X}}$ and $|\cdot|_{s, \mathcal{X}}$\EK{,} respectively, omitting the index in case $s=0$.
The a--priori error bounds for the proposed dG method will be proved with respect to the following mesh-dependent norms:
 \begin{eqnarray}
 \vertiii{\mathbf{v}}^2 =  \|\nabla \mathbf{v} \|^2_{\Omega^\sharp} +  \|{\magenta{\sigma}^{1/2}}%{\green{\mathfrak{p}^{1/2}}}h^{-1/2} 
\mathbf{v} \|^2_{\Gamma}
 +
\EK{\sum_{{{{F\in\mathsf{\Gamma}}%^\sharp
}}}}
 {\red{ \|{\magenta\mathfrak{p}^{-1}}h_{\EK{F}}^{1/2}%{\magenta{\mathbf{\Pi_L^2}}(}
 \nabla \mathbf{v}
 %)
 \cdot \mathbf{n}_{\Gamma} \|^2_{{\EK{F}}} }}
\qquad\qquad\qquad\qquad\qquad\qquad
\nonumber
\\ +  \sum_{F\in \mathcal{F}^\sharp_{int}}  \|{\green \sigma^{1/2}}%\magenta\mathfrak{p}^{1/2}}h^{-1/2}
[\![\mathbf{v} ]\!] \|^2_{\EK{F}%\cap \Omega^\sharp
} 
 + \EK{\frac{1}{2}}{\red{\sum_{{{{K\in
%redhot{\mathcal{T}^\sharp}
\EK{\mathcal{T}^{cov}}
}}}} \|{\magenta{\mathfrak{p}^{-1}}}h_K^{1/2}\nabla \mathbf{v} |_{T}\cdot \mathbf{n}_{T} \|_{\EK{\partial K}%\cap \Omega^\sharp
}^2}},
\nonumber \\
  \vertiii{p}^2 =     \| p \|^2_{\Omega^\sharp} 
  +  
\EK{\sum_{{{{F\in\mathsf{\Gamma}}%^\sharp
}}}}
\| {\magenta{\mathfrak{p}^{-1}}}h_{\EK{F}}^{1/2}p \|^2_{\EK{F}} + \sum_{F\in \mathcal{F}^\sharp_{int}}  \|{\magenta{\mathfrak{p}^{-1}}}h_F^{1/2}[\![p]\!] \|^2_{\redhot{F%\cap \Omega^\sharp
}}
   + \EK{\frac{1}{2}}{\red{\sum_{{{{K\in
%\redhot{\mathcal{T}^\sharp}
\EK{\mathcal{T}^{cov}}
}}}}  \|{\mathfrak{p}^{-1}}h_K^{1/2}p \|^2_{\EK{\partial K}%\cap \Omega^\sharp
}}}, 
\nonumber
 %\\   \vertiii{(\mathbf{v},p)}^2  =  \vertiii{\mathbf{v}}^2+\vertiii{p}^2.   \nonumber 
 \end{eqnarray}
 \normalsize
and   $\vertiii{(\mathbf{v},p)}^2  =  \vertiii{\mathbf{v}}^2+\vertiii{p}^2$.
To investigate stability, \green{since some terms of the above terms dominate related to others}, we will also make use of the following norms in $\Omega^\sharp$  for the discrete velocity and  pressure approximations, \green{e.g. for the velocity norm the third and fifth terms appear similar $hp$- behavior with the first %second
 term and for pressure the second, third and fourth term with the first term. For this reason, we also update and define the norms}:  
 \begin{eqnarray}
\vertiii{\mathbf{v}}^2_{\EK{V^c}} = { \|\nabla \mathbf{v} \|^2_{\EK{\Omega^{cov}}}} +  \|{\magenta{\sigma}^{1/2}}%{\green{\mathfrak{p}}}h^{-1/2}
 \mathbf{v} \|^2_{\Gamma} + \sum_{F\in \mathcal{F}^\sharp_{int}}  \|{\green{\sigma}^{1/2}}%{\magenta{\mathfrak{p}^{1/2}}}h^{-1/2}
[\![\mathbf{v} ]\!] \|^2_{\redhot{F%\cap \Omega^\sharp
}}, \,\,\,\text{ and } \,\,\,
 \vertiii{p}^2_{\EK{Q^{c}}} = { \| p \|^2_{\EK{\Omega^{cov}}}}, %\nonumber %\\ %  +\sum_{F\in \mathcal{F}_{h}^{int}}  \|h^{1/2}[\![p ]\!] \|^2_{F\cap \Omega}, \nonumber \\
 \end{eqnarray} 
{\green{while $\vertiii{(\mathbf{v},p)}^2_{{\EK{V^{c}}},{\EK{Q^{c}}}} =  \vertiii{\mathbf{v}}_{{\EK{V^{c}}}}^2+\vertiii{p}^2_{{\EK{Q^{c}}}}$.}}
\green{We underline,  that in the following, and for completeness we treat all the aforementioned terms showing this equivalence.}
 
The following section is devoted to useful trace and inverse estimates, which have been proved in \cite{Georgoulis17,Dong19} %{HH02,BHLM16,Q09}
 and they will form the basis to prove a--priori error estimates of the proposed method. %As in the classical symmetric interior penalty method, the normal flux of a discrete function $v \in \mathcal{P}^k(T)$ on a face $F\subset\partial T$ or on the boundary $\Gamma$  is respectively controlled by the inverse inequalities:
%\TODO{
%\begin{align}
%\|\partial_{\mathbf{n}_F}^j v\|_{F} &
% \lesssim h_T^{i-j-1/2} \|D^i v\|_T \quad\forall\,\, T\in\mathcal{T}_h,\,\,\, 0\leq i\leq j, \label{deriv_estimate1} \\
%\|\partial_{\mathbf{n}_\Gamma}^j v\|_{\Gamma\cap T} & \lesssim h_T^{i-j-1/2} \|D^i v\|_T \quad \forall\,\, T\in\mathcal{T}_h,\,\,\, 0\leq i\leq j, \label{deriv_estimate2}\\
%\|D^j v\|_T  & 
%\lesssim  h_T^{i-j} \|D^i v\|_T\quad\quad\,\,\,\,\forall\,\, T\in\mathcal{T}_h,\,\,\, 0\leq i\leq j. \label{deriv_estimate3}
%\end{align}
%}
%The notation $a\lesssim b$ (or $a\gtrsim b$) signifies  $a\leq Cb$ (or $a\geq Cb$) for some generic positive constant $C$ that varies with the context, but is always independent of the mesh size and the position of the boundary in relation to the mesh. 
\subsection{{Inverse estimates (trace and $H^1$--$L^2$)}}
It is easy to derive the estimates with respect to the  norms $\vertiii{\cdot }$ and $\vertiii{\cdot}_{{\EK{V^{c}}} \text{ or }{\EK{Q^{c}}}}$, namely,
\begin{equation}\label{norm_est1} 
\vertiii{\mathbf{v}}\le C_{{\EK{V^{c}}}} \vertiii{\mathbf{v}}_{{\EK{V^{c}}}}, \quad \vertiii{p} \le C_{{\EK{Q^{c}}}} \vertiii{p}_{{\EK{Q^{c}}}}.
\end{equation}
\begin{ass}\label{ass:basic_geometry}
For each element $K \in \mathcal{T}^\sharp$ with $K\cap \Gamma\neq \emptyset$, we assume that $K$ is a Lipschitz domain, and  $\partial K$ can be subdivided into mutually exclusive subsets $\{F_i\}^{n_K}_{i=1}$
characterized by the property that respective sub-elements $K_{F_i}\equiv K_{F_i} (\text{v}_{0,i}) \subset K$ there exist,
with $d$ planar faces meeting at one vertex $\text{v}_{0,i} \in K$, with $F_i \subset \partial K_{F_i}$: for
$i = 1, . . . , n_K$, we consider that (a) $K_{F_i}$  is star-shaped with respect to $\text{v}_{0,i}$, and
(b) $\mathbf{d}_i(x) \cdot \mathbf{n}_{F_i}(x) > 0$ for $\mathbf{d}_i (x) := x - \text{v}_{0,i}$, $x \in K_{F_i}$, and $\mathbf{n}_{F_i}(x)$ the respective unit outward normal vector to $F_i$ at $x \in F_i$.
%\end{itemize}
It is also considered that the boundary $\partial K$ of each element $K \in \mathcal{T}^\sharp$, $K\cap \Gamma \neq \emptyset$ is
the union of a finite  number of closed $C^1$ surfaces.
\end{ass}
Both (a) and (b) assumptions, for the two-dimensional case, are visualized in Figure \ref{fig:curved_boundary_elements}. We notice that in the above  weak mesh assumption, the sub-domains $\{F_i \}^{n_K}_{i=1}$ are not required to coincide with the faces of the element $K$, namely, each $F_i$ may be part of a face or may include one or more faces of $K$, as well as, there is no requirement for $\{n_K\}_{K\in \mathcal{T}^\sharp, K\cap\Gamma \neq 0}$ to remain
uniformly bounded across the mesh. In particular Assumption \ref{ass:basic_geometry} states that the curvature of the collection of consecutive curved faces comprising $F_i$ cannot be arbitrarily large almost everywhere.
Moreover, with some small loss of generality, Assumption \ref{ass:basic_geometry} b) can be made stronger by adding the ingredient that it is possible to consider a fixed point $\text{v}_{0,i}$ such that there exists a global constant $c_{sh} > 0$, such that
\begin{align}\label{ineq:global}
\mathbf{d}_i(x) \cdot \mathbf{n}_{F_i}(x) \ge c_{sh} h_{K_{F_i}} 
\end{align}
see e.g. [24, 65].
We underline that (\ref{ineq:global}) does not imply shape-regularity of the $K_{F_i}$'s; in particular $K_{F_i}$'s with small $F_i$ compared to the remaining (straight) faces of
$K_{F_i}$ are acceptable. Such anisotropic boundary sub-elements $K_{F_i}$'s may be necessary
to ensure that each $K_{F_i}$ remains star-shaped when an element boundary's
curvature is locally large, see e.g., $K_{F_i}$ in Figure \ref{fig:curved_boundary_elements} and a collection of both shape-regular and anisotropic $K_{F_i}$'s. % in Figure 3.
%
%(iii) On certain geometrically extreme cases, satisfying Assumption $K_{F_i}$ may
%require a small number of refinements of the elements $K \in \mathcal{T}$ of a given
%initial mesh.
In general, $F_i$ is not required to be connected, although, by splitting $F_i$ to its connected
subsets, re-indexing the $F_i$'s to correspond to unique $K_{F_i}$, we can correspond
one $K_{F_i}$ to each ${F_i}$. %, we shall take the latter point of view in what follows to avoid further notational complexity.
{The aforementioned Assumptions \ref{ass:basic_geometry} are sufficient for the proof of the trace estimates as well as for the validity of the $H^1$--$L^2$
inverse estimate as in \cite{Georgoulis17}. % it is presented below.
}
{
\begin{lem} Let element $K \in \mathcal{T}^\sharp$ be a Lipschitz domain satisfying Assumption
\ref{ass:basic_geometry}. Then, for each $F_i \subset \partial K$ 
\EK{from Assumptions \ref{ass:basic_geometry}},
$i = 1, ..., n_K$, and for each $v \in \mathcal{P}^\mathfrak{p} (K)$, we have the inverse estimate:
\begin{align}\label{ineq:inv_est}
||v||^2_{F_i}\le\frac{(\mathfrak{p} + 1)(\mathfrak{p} + d)}{\min\limits_{x\in F_i}{(\mathbf{d}_i}\cdot \mathbf{n}_{F_i})}
||v||^2_ {K_{F_i}}.
\end{align}
\end{lem}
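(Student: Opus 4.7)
The plan is to follow a Warburton--Hesthaven-style radial reduction, exploiting the star-shapedness of $K_{F_i}$ with respect to $\text{v}_{0,i}$ to reduce the inequality to a sharp one-dimensional weighted polynomial inverse estimate. Concretely, because $K_{F_i}$ is star-shaped with respect to $\text{v}_{0,i}$, I parameterize it by
\[
\Phi: F_i \times [0,1] \to K_{F_i}, \qquad \Phi(y,\rho) = \text{v}_{0,i} + \rho\,(y-\text{v}_{0,i}).
\]
A direct computation of the Jacobian of $\Phi$ gives $|\det D\Phi(y,\rho)| = \rho^{d-1}\bigl(\mathbf{d}_i(y)\cdot \mathbf{n}_{F_i}(y)\bigr)$: the $\rho^{d-1}$ factor comes from the radial dilation and the $\mathbf{d}_i\cdot \mathbf{n}_{F_i}$ factor from the projection onto the surface element of $F_i$. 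This is where Assumption \ref{ass:basic_geometry}(b) enters, to ensure the Jacobian is strictly positive and the parameterization is a bijection onto $K_{F_i}$.

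Second, for every fixed $y \in F_i$ the restriction $q_y(\rho) := v(\Phi(y,\rho))$ is a polynomial in $\rho$ of degree at most $\mathfrak{p}$, since $v\in \mathcal{P}^\mathfrak{p}(K)$ and $\Phi(y,\cdot)$ is affine in $\rho$. I then invoke the sharp Jacobi-weighted endpoint inequality
\[
|q(1)|^2 \;\le\; (\mathfrak{p}+1)(\mathfrak{p}+d)\int_0^1 |q(\rho)|^2\,\rho^{d-1}\,d\rho, \qquad \forall\, q\in\mathcal{P}^\mathfrak{p}([0,1]),
\]
which is a standard consequence of the Christoffel--Darboux identity for the Jacobi polynomials $P_k^{(0,d-1)}$ orthonormal on $[0,1]$ with weight $\rho^{d-1}$, evaluated at the endpoint $\rho=1$. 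Applying this with $q=q_y$ yields
\[
|v(y)|^2 \;\le\; (\mathfrak{p}+1)(\mathfrak{p}+d)\int_0^1 |v(\Phi(y,\rho))|^2\,\rho^{d-1}\,d\rho.
\]

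Third, I integrate over $F_i$ and use the change-of-variables with $\Phi$:
\begin{align*}
\|v\|_{F_i}^2
&\le (\mathfrak{p}+1)(\mathfrak{p}+d)\int_{F_i}\!\int_0^1 |v(\Phi(y,\rho))|^2\,\rho^{d-1}\,d\rho\,dS(y)\\
&= (\mathfrak{p}+1)(\mathfrak{p}+d)\int_{F_i}\!\int_0^1 |v(\Phi(y,\rho))|^2\,\frac{\rho^{d-1}\bigl(\mathbf{d}_i(y)\cdot\mathbf{n}_{F_i}(y)\bigr)}{\mathbf{d}_i(y)\cdot\mathbf{n}_{F_i}(y)}\,d\rho\,dS(y)\\
&\le \frac{(\mathfrak{p}+1)(\mathfrak{p}+d)}{\min_{x\in F_i}(\mathbf{d}_i\cdot\mathbf{n}_{F_i})}\,\|v\|_{K_{F_i}}^2,
\end{align*}
which is the claimed bound.

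The main obstacle is the sharp constant $(\mathfrak{p}+1)(\mathfrak{p}+d)$: everything else is bookkeeping, but the 1D weighted inequality must be pinned down with the correct Jacobi weight $\rho^{d-1}$ so that the $d$-dependence matches. A secondary subtlety is verifying that the parameterization $\Phi$ is well-defined and measure-preserving up to the Jacobian above when $F_i$ is merely Lipschitz and possibly disconnected; this is handled by first treating each connected component of $F_i$ (and hence a single star-shaped $K_{F_i}$) and then summing, as indicated in the remarks following Assumption \ref{ass:basic_geometry}.
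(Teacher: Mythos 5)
Your proof is correct: the radial parameterization of the star-shaped $K_{F_i}$ from $\text{v}_{0,i}$, the Jacobian factor $\rho^{d-1}\bigl(\mathbf{d}_i\cdot\mathbf{n}_{F_i}\bigr)$, and the sharp one-dimensional Jacobi-weighted endpoint inequality --- whose constant $\sum_{k=0}^{\mathfrak{p}}(2k+d)=(\mathfrak{p}+1)(\mathfrak{p}+d)$ is the reproducing kernel at $\rho=1$ for the weight $\rho^{d-1}$ on $[0,1]$ --- combine exactly as you describe, and the star-shapedness gives the needed inclusion of the cone over $F_i$ in $K_{F_i}$ so the final bound goes in the right direction. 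The paper states this lemma without proof, recalling it from \cite{Georgoulis17}, and your radial-reduction argument is precisely the proof given in that reference, so the approach matches.
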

\begin{rem}
Inequality (\ref{ineq:inv_est}) is a function of $\text{v}_{0,i}$ defining $K_{F_i}$. %Since the closure of the original (curved) element $K$ is compact in $\mathbb{R}^d$, 
The right-hand side can be minimized with a choice of an optimal $\text{v}_{0,i}$. We underline that under the stronger assumption (\ref{ass:basic_geometry}), one could derive the trace inverse estimate for star-shaped, shape-regular elements with piece-wise smooth boundaries: $||v||^2_{\partial K} \le C\frac{{\green{\mathfrak{p}}}^2}{h_K}||v||^2_K$.
\end{rem}
}
\begin{lem}
Let $K \in \mathcal{T}^\sharp$ be a Lipschitz domain satisfying Assumption \ref{ass:basic_geometry}%4.1
. Then,
for all $\varepsilon > 0$, we have the estimate
$$
||v||^2_{F_i} \le \frac{d + \varepsilon}{
{\min\limits_{x\in F_i}{(\mathbf{d}_i}\cdot \mathbf{n}_{F_i})}
}||v||^2_ {K_{F_i}} 
+ 
\frac{\max\limits_{x\in F_i}{|\mathbf{d}_i|_2^2}}{
{\varepsilon\min\limits_{x\in F_i}{(\mathbf{d}_i}\cdot \mathbf{n}_{F_i})}
}||\nabla v||^2_{K_{F_i}}
,
$$
for all $v \in H^1(\Omega^\sharp)$ and $i = 1, . . . , n_K$.
We note that summing over $i = 1, . . . , n_K$, under assumption $\mathbf{d}_i(x) \cdot \mathbf{n}_{F_i}(x) \ge c_{sh}h_{K_{F_i}}$ and that $h_{K_{F_i}} \sim h_K$ we take the estimate gives the classical trace estimate $||v||^2_{\partial K} \le C  (h^{-1}_K ||v||^2_K + h_K||\nabla v||^2_K )$.
\end{lem}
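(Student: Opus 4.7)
The plan is to use the standard ``divergence-trick'' trace inequality, exploiting the star-shaped structure of the sub-element $K_{F_i}$ exactly as in the previous (polynomial) inverse estimate, but without invoking the polynomial degree. Concretely, I would take the vector field $\mathbf{d}_i(x)=x-\mathrm{v}_{0,i}$, which satisfies $\nabla\cdot\mathbf{d}_i = d$ throughout $K_{F_i}$, and apply the divergence theorem to $v^{2}\,\mathbf{d}_i$ on $K_{F_i}$:
\begin{equation*}
\int_{\partial K_{F_i}} v^{2}\,\mathbf{d}_i\cdot\mathbf{n}\,d\gamma
= \int_{K_{F_i}}\bigl(d\,v^{2} + 2\,v\,\nabla v\cdot\mathbf{d}_i\bigr)\,dx.
\end{equation*}

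The crucial geometric input is Assumption \ref{ass:basic_geometry}: because $K_{F_i}$ is the star-shaped sub-element spanned by $d$ planar faces meeting at $\mathrm{v}_{0,i}$ together with $F_i$, each of those planar faces contains $\mathrm{v}_{0,i}$, so $\mathbf{d}_i\cdot\mathbf{n}$ vanishes on them and the boundary integral reduces to the single term over $F_i$. Using then $\mathbf{d}_i\cdot\mathbf{n}_{F_i}\ge \min_{x\in F_i}(\mathbf{d}_i\cdot\mathbf{n}_{F_i})>0$ (again guaranteed by Assumption \ref{ass:basic_geometry}(b)), I obtain
\begin{equation*}
\min_{x\in F_i}(\mathbf{d}_i\cdot\mathbf{n}_{F_i})\,\|v\|^{2}_{F_i}
\le d\,\|v\|^{2}_{K_{F_i}} + 2\int_{K_{F_i}} v\,\nabla v\cdot\mathbf{d}_i\,dx.
\end{equation*}

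To split the mixed term I would apply Cauchy--Schwarz pointwise and then the weighted Young inequality $2ab\le \varepsilon a^{2}+\varepsilon^{-1}b^{2}$ with $a=v$ and $b=\nabla v\cdot\mathbf{d}_i$, yielding
\begin{equation*}
2\int_{K_{F_i}} v\,\nabla v\cdot\mathbf{d}_i\,dx
\le \varepsilon\,\|v\|^{2}_{K_{F_i}} + \varepsilon^{-1}\!\!\max_{x\in K_{F_i}}|\mathbf{d}_i|_{2}^{2}\,\|\nabla v\|^{2}_{K_{F_i}}.
\end{equation*}
Since $K_{F_i}$ is star-shaped with apex $\mathrm{v}_{0,i}$ and $F_i$ is the ``opposite'' face, every ray from $\mathrm{v}_{0,i}$ through $x\in K_{F_i}$ exits through $F_i$ at a point strictly further from $\mathrm{v}_{0,i}$, so $\max_{K_{F_i}}|\mathbf{d}_i|_{2}^{2}\le \max_{F_i}|\mathbf{d}_i|_{2}^{2}$. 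Dividing the resulting inequality by $\min_{F_i}(\mathbf{d}_i\cdot\mathbf{n}_{F_i})$ gives the claimed bound.

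The density of $C^{1}(\overline K)$ in $H^{1}(K)$ takes care of the regularity needed to justify the divergence theorem (the assumption that $\partial K$ is a finite union of $C^{1}$ surfaces ensures this). The only real obstacle I anticipate is bookkeeping the geometric fact that on the planar faces meeting at $\mathrm{v}_{0,i}$ the integrand $\mathbf{d}_i\cdot\mathbf{n}$ really vanishes identically (not just in a limiting sense at curved portions of $\partial K_{F_i}$), which is exactly what the star-shaped sub-element construction of Assumption \ref{ass:basic_geometry} provides; everything else is Cauchy--Schwarz and Young's inequality. The final observation—the corollary that if additionally $\mathbf{d}_i\cdot\mathbf{n}_{F_i}\ge c_{sh} h_{K_{F_i}}$ and $h_{K_{F_i}}\sim h_K$ then $\|v\|^{2}_{\partial K}\lesssim h_K^{-1}\|v\|_K^{2}+h_K\|\nabla v\|_K^{2}$—follows by choosing $\varepsilon = 1$ and summing over $i=1,\dots,n_K$.
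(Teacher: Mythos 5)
Your argument is correct and is exactly the standard ``star-shaped divergence-theorem'' proof: apply the divergence theorem to $v^{2}\mathbf{d}_i$ on $K_{F_i}$, use that $\mathbf{d}_i\cdot\mathbf{n}$ vanishes on the $d$ planar faces through $\text{v}_{0,i}$, bound the surface term below by $\min_{F_i}(\mathbf{d}_i\cdot\mathbf{n}_{F_i})\|v\|^2_{F_i}$, and split the volume term with Young's inequality. The paper itself states this lemma without proof, citing \cite{Georgoulis17,Dong19}, and your derivation coincides with the argument given there, including the geometric observation that $\max_{K_{F_i}}|\mathbf{d}_i|$ is attained on $\overline{F_i}$ and the concluding summation yielding the classical scaled trace inequality.
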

{
\begin{defn}\label{def:p-coverable}
An element $K \in \mathcal{T}^\sharp$ is said to be ${\green{\mathfrak{p}}}$-coverable with respect to ${\green{\mathfrak{p}}} \in \mathbb{N}$, if there exists a set of $m_K$ overlapping shape regular simplices $K_i$, $i = 1,...,m_K$, $m_K \in \mathbb{N}$, such that
\begin{align}
dist(K, \partial K_i ) < C_{\text{as}}\frac{\text{diam}(K_i)}{{\green{\mathfrak{p}}}^2}, \text{ and }
 |K_i| \ge c_{\text{as}}|K|
\end{align}
for all $i = 1, . . . , m_K$, where $C_{\text{as}}$ and $c_{\text{as}}$ are positive constants, independent of $K$ and $\mathcal{T}^\sharp$.
%{{ nA BGEI_EINAI ME P GENERALIZED}}
\end{defn}
}
\begin{lem}[\cite{Georgoulis17} ] \label{lem:Fi_to_K}
Let $K \in \mathcal{T}^\sharp$ Lipschitz satisfying Assumption \ref{ass:basic_geometry}. Then, for each $\text{v} \in \mathcal{P}^{\green{\mathfrak{p}}}(K)$, we have the inverse inequality 
\begin{align}\label{eq:Fi_to_K}
 ||v||^2_{F_i} \le C_{INV}({\green{\mathfrak{p}}}, K, F_i)\frac{({\green{\mathfrak{p}}}+1)({\green{\mathfrak{p}}}+d)|F_i|}{|K|}||v||^2_K,
\end{align}
with $C_{\text{INV}}({\green{\mathfrak{p}}}, K, F_i)$ to be if $K$ is ${\green{\mathfrak{p}}}$-coverable:
$
\min\{\mathcal{C}(K,F_i), {c_{as}^{-1} 2^{5d+1}{\green{\mathfrak{p}}}^{2(d-1)}}\}   
,
$
otherwise: 
\\
$\mathcal{C}(K,F_i)$,
with $c_{\text{as}} > 0$ as in Definition \ref{def:p-coverable}
and
$\mathcal{C}=\frac{|K|}{|F_i|\sup\limits_{\mathbf{v}_{0,i}\in K} \min\limits_{\mathbf{x}\in F_i}(\mathbf{d}_i\cdot\mathbf{n}_{F_i})}
$.
% with $q_j \in \mathbb{N}_0$. % as per Lemma 4.15 for each $\hat{K}_j$. 
%
%The dependence of $C_{INV}(p, K, F_i)$ on $q_K$ will be hence forth suppressed for brevity.
\end{lem}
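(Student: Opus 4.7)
The plan is to derive the two branches of the inverse bound separately and then take their minimum, matching the definition of $C_{\mathrm{INV}}(\mathfrak{p},K,F_i)$ in the statement. For the first, always-available branch, I would start from the trace inverse estimate \eqref{ineq:inv_est} of the preceding lemma applied to $v\in\mathcal{P}^{\mathfrak{p}}(K)$ on the star-shaped sub-element $K_{F_i}\subset K$ guaranteed by Assumption \ref{ass:basic_geometry}. Since $K_{F_i}\subset K$, monotonicity of the $L^2$-norm yields $\|v\|^2_{K_{F_i}}\le\|v\|^2_K$, and inserting the factor $|F_i|/|K|$ together with its reciprocal produces
$$
\|v\|^2_{F_i} \le \frac{|K|}{|F_i|\,\min_{x\in F_i}(\mathbf{d}_i\cdot\mathbf{n}_{F_i})}\cdot\frac{(\mathfrak{p}+1)(\mathfrak{p}+d)\,|F_i|}{|K|}\|v\|^2_K.
$$
Since the anchor $\mathbf{v}_{0,i}$ defining $K_{F_i}$ can be chosen freely within $K$ subject to the star-shapedness condition, taking the supremum of $\min_{x\in F_i}(\mathbf{d}_i\cdot\mathbf{n}_{F_i})$ over admissible $\mathbf{v}_{0,i}\in K$ produces exactly the prefactor $\mathcal{C}(K,F_i)$ announced in the lemma.

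For the sharper branch, valid when $K$ is $\mathfrak{p}$-coverable, the idea is to exploit the overlapping family of shape-regular simplices $\{K_i\}_{i=1}^{m_K}$ supplied by Definition \ref{def:p-coverable}. On each such $K_i$, classical $hp$-Markov- and trace-type inverse estimates are available with explicit constants: the polynomial $v$, extended naturally to $K_i$ since $v\in\mathcal{P}^{\mathfrak{p}}(\mathbb{R}^d)$, satisfies an $L^\infty$-to-$L^2$ inequality of the form $\|v\|^2_{L^\infty(K_i)}\lesssim\mathfrak{p}^{2(d-1)}|K_i|^{-1}\|v\|^2_{K_i}$. The coverability condition $\operatorname{dist}(K,\partial K_i)<C_{\mathrm{as}}\operatorname{diam}(K_i)/\mathfrak{p}^2$ then transports this $L^\infty$ control from the interior of $K_i$ to a neighborhood containing $F_i$ at the cost of a universal constant, after which $\|v\|^2_{F_i}\le|F_i|\,\|v\|^2_{L^\infty(F_i)}$ and the volume comparison $|K_i|\ge c_{\mathrm{as}}|K|$ convert the factor $|K_i|^{-1}$ into $c_{\mathrm{as}}^{-1}|F_i|/|K|\cdot|F_i|^{-1}$. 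Tracking the dimensional constants arising from the simplicial $hp$-inverse estimates yields the prefactor $c_{\mathrm{as}}^{-1}2^{5d+1}\mathfrak{p}^{2(d-1)}$, and taking the minimum with the first branch gives the claimed $C_{\mathrm{INV}}(\mathfrak{p},K,F_i)$.

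The main obstacle lies in the second case. The first branch is essentially a direct reformulation of the preceding lemma combined with an optimization over the vertex anchor. The second requires controlling $v$ on a portion $F_i$ of $\partial K$ that is not a face of the covering simplex $K_i$; doing so without losing polynomial order means combining a simplex Markov estimate with a transport argument across the $O(\mathfrak{p}^{-2}\operatorname{diam}(K_i))$-thin strip separating $K$ from $\partial K_i$. The explicit power $\mathfrak{p}^{2(d-1)}$ and the dimensional constant $2^{5d+1}$ must both be tracked carefully rather than absorbed into a generic constant, and this accounting — following the strategy established in \cite{Georgoulis17} — is where the technical bulk of the argument sits.
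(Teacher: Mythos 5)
The paper itself gives no proof of this lemma: it is recalled verbatim from \cite{Georgoulis17}, so there is no in-paper argument to compare against and your proposal has to be judged as a reconstruction of the cited proof. Your first branch is correct and complete: applying \eqref{ineq:inv_est} on the star-shaped sub-element $K_{F_i}$, enlarging $\|v\|^2_{K_{F_i}}$ to $\|v\|^2_K$, inserting the factor $|F_i|/|K|$ with its reciprocal, and optimizing over the admissible anchors $\text{v}_{0,i}$ yields exactly the prefactor $\mathcal{C}(K,F_i)$ in front of $(\mathfrak{p}+1)(\mathfrak{p}+d)|F_i|/|K|$. This is precisely how the unconditional branch is obtained in the source.

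The $\mathfrak{p}$-coverable branch, however, has a genuine gap. First, the $L^\infty$--$L^2$ inverse estimate on a shape-regular $d$-simplex carries the power $\mathfrak{p}^{2d}$, not $\mathfrak{p}^{2(d-1)}$; the exponent $2(d-1)$ in $C_{\mathrm{INV}}$ is what remains after splitting off the factor $(\mathfrak{p}+1)(\mathfrak{p}+d)\sim\mathfrak{p}^2$ already displayed in \eqref{eq:Fi_to_K}, so your intermediate inequality is not the standard one as written and the bookkeeping must be made explicit. Second, and more decisively, the hard step is not transporting $L^\infty$ control \emph{to} $F_i$ --- once $\|v\|_{L^\infty}$ is controlled on a set containing $F_i$, the bound $\|v\|^2_{F_i}\le |F_i|\,\|v\|^2_{L^\infty(F_i)}$ is immediate. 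The hard step is that each covering simplex $K_i$ sticks out of $K$, so one must bound $\|v\|_{L^\infty(K_i)}$ by the $L^2$-norm of $v$ over $K_i\cap K$ only. This is exactly where the condition $\operatorname{dist}(K,\partial K_i)<C_{\text{as}}\operatorname{diam}(K_i)/\mathfrak{p}^2$ of Definition \ref{def:p-coverable} is consumed: a one-dimensional Markov/Chebyshev-growth argument shows that a degree-$\mathfrak{p}$ polynomial can grow only by a bounded absolute factor over a strip of width $O(\operatorname{diam}(K_i)/\mathfrak{p}^2)$, and it is this growth lemma that produces the explicit constant $2^{5d+1}$. Your sketch names the thin strip but supplies neither the growth estimate nor the constant-tracking; since the explicit constants are the entire point of this branch (a generic $C$ would make the $\min$ in $C_{\mathrm{INV}}$ meaningless), this step cannot be left as an assertion.
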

%
%
%{\redhot{(to be removed) For simplicity in the following we switch from the aforementioned {\textit{arbitrary number of element faces case}} e.g. the number of faces that the elements possess may not be uniformly bounded under mesh refinement, to the{\textit{ bounded number of element faces case}} which guarantees that the number of faces each element possesses remains bounded under mesh refinement. Based on this consideration the inverse estimate of Lemma \ref{lem:Fi_to_K} becomes
%%\begin{lem} \label{lem:Fi_to_K}
%%Let $K \in \mathcal{T}^\sharp$ Lipschitz satisfying Assumption \ref{ass:basic_geometry}. Then, for each $\text{v} \in \mathcal{P}^{\green{\mathfrak{p}}}(K)$, we have the inverse inequality 
%%\begin{align}\label{eq:Fi_to_K2}
%$
% ||v||^2_{F_i} \le C'_{INV}({\green{\mathfrak{p}}}, K, F_i)\frac{{\green{\mathfrak{p}}}^2|F_i|}{|K|}||v||^2_K,
%$
%%\end{align}
 %for proper $C'_{INV}$, \cite{WaHe03, Dong19}.
%}}
%
%{EDO HTANE H APODEIJH K TO LEMA#>&
%}
%from \cite{Ha05}  with \eqref{deriv_estimate3}.
After defining the covering domain $\bar{\Omega}^{cov}$ and considering the Assumption \ref{ass:max_card},  see e.g. \cite{Georgoulis17,Dong19}, %{BuHa14_III, MLLR12},
 we interpolate a pair   $(\mathbf{u}, p) \in  [H^{2}(\Omega^\sharp) ]^d \times H^1(\Omega^\sharp)$ through a suitable interpolant of 
%$ [H^{k+1}\right]^d ?H^k$
%
$
%(\mathbf{u}, p) \in
  [H^{\mathfrak{p}+1}(\mathrm{\Omega^\sharp}) ]^d \times H^\mathfrak{p}(\mathrm{\Omega^\sharp})$ -extensions of the functions $(\mathbf{u}, p)$  on $\Omega^{cov}%_{\mathcal{T}}
$.
{
\begin{defn}\label{def:covering_domain}
Given a mesh $\mathcal{T}^\sharp$, we define a covering $\mathcal{T}^{cov} = \{\mathcal{K}\}$ of $\mathcal{T^\sharp}$ to be a
set of open shape-regular $d$–simplices $\mathcal{K}$, such that for each $K\in\mathcal{T^\sharp}$, there exists a
$\mathcal{K}\in \mathcal{T}^{cov}$ with $K \subset\mathcal{K}$. For a given $\mathcal{T}^{cov}$, we define the covering domain $\bar{\Omega}^{cov} := \cup_{\mathcal{K}\in \mathcal{T}^{cov}} {\bar{\mathcal{K}}}$.
\end{defn}
 \begin{ass}\label{ass:max_card}
For a given mesh $\mathcal{T}^\sharp$, we postulate the existence of a covering
$\mathcal{T}^{cov}$, and of a global constant $\mathcal{O}_{\Omega^\sharp} \in \mathbb{N}$, independent of the mesh parameters, such
that
$$
\max\limits_{K\in\mathcal{T}}\text{card}\{K' \in \mathcal{T}: K'\cup\mathcal{K} \neq \emptyset, \mathcal{K} \in \mathcal{T}^{cov}  \text{ such that } K \subset \mathcal{K} \}\le \mathcal{O}_{\Omega^\sharp}.
$$
For such $\mathcal{T}^{cov}$, we further assume that $h_\mathcal{K} := \text{diam}(\mathcal{K}) \le C_\text{diam}h_K$, for all pairs
$K \in \mathcal{T}^{\sharp}$, $\mathcal{K} \in \mathcal{T}^{cov}$, with $K \subset \mathcal{K}$, for a global constant $C_{diam} > 0$, uniformly with
respect to the mesh size $h_K$.
\end{ass}
\redhot{The latter assumption provides %ensures 
the shape-regularity of the covering mesh $\mathcal{T}^{cov}$  \EK{--not though for the true $\mathcal{T}^\sharp$--}  \EK{ in the sense 
%of [16], i.e.,
that there exists a positive constant $c$, independent of the mesh parameters, such that
$\forall K \in \mathcal{T}^{cov}\subset \mathcal{T}$,  $\rho_\mathcal{{K}} \ge c {h_\mathcal{K}}$ holds, with $\rho_\mathcal{{K}}$ denoting the diameter of the largest ball contained in $\mathcal{K}$}.} 
%\redhot{Shape-regularity of the mesh computational mesh $\mathcal{T}^\sharp$ is not assumed.%, for more details one could see \cite{Georgoulis17}.
%
 %We refer to Figure ?3.6 in [21] (biblio Georgouli) for an example on how these two concepts may differ considerably.
%The validity of t
The aforementioned %assumption
 will allow  the application of the standard $hp$-version approximation estimates on simplicial elements, see e.g., from \cite{Schwa98} that
% ?[6, 7, 59], 
on each $\mathcal{K}$ we can restrict the error over $K \subset \mathcal{K}$. However, it requires to extend \EK{properly} the exact solution $u$ onto $\Omega^{cov}$. %To that end, we shall use
In particular:%the following classical result.
}{
\begin{thm}%[\cite{Georgoulis17,Dong19}{ }]
 Let $\Omega^\sharp$ be a domain with a Lipschitz boundary. Then there
exists a linear extension operator $\mathfrak{E}: H^s(\Omega^\sharp) \to H^s(\mathbb{R}^d)$, $s \in \mathbb{N}_0$, such that $\mathfrak{E}v|_{\Omega^\sharp} = v$
and
$$
||\mathfrak{E}v||_{H^s(\mathbb{R}^d)} \le C_\mathfrak{E} ||v||_{H^s(\Omega^\sharp)}, $$
where $C_\mathfrak{E}$ is a positive constant depending only on $s$ and on $\Omega^\sharp$.
\end{thm}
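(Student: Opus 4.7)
The plan is to prove this classical Stein-type extension theorem by the standard reduction to the half-space via a boundary-flattening partition of unity, followed by an averaging/reflection construction on the half-space. First, since $\Omega^\sharp$ has Lipschitz boundary, I would pick a finite open cover $\{U_j\}_{j=0}^{N}$ of $\overline{\Omega^\sharp}$ with $\overline{U_0}\subset \Omega^\sharp$ and each $U_j$ for $j\ge 1$ intersecting $\partial\Omega^\sharp$ in such a way that (after a rigid motion) $\partial\Omega^\sharp\cap U_j$ is the graph of a Lipschitz function $\varphi_j$, so the bi-Lipschitz change of variables $\Psi_j(x',x_d)=(x',x_d-\varphi_j(x'))$ maps $U_j\cap \Omega^\sharp$ onto an open set in the half-space $\mathbb{R}^d_{+}=\{x_d>0\}$. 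Choose a subordinate partition of unity $\{\chi_j\}$ with $\chi_j\in C_c^\infty(U_j)$, write $v=\sum_j \chi_j v$, and extend each piece separately.

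For the interior piece $\chi_0 v$, extension by zero already lies in $H^s(\mathbb{R}^d)$ with the desired bound, so only the boundary pieces $w_j:=(\chi_j v)\circ \Psi_j^{-1}$ (supported in the half-space near the boundary) need care. On $\mathbb{R}^d_{+}$, I would construct the extension by Stein's averaging/reflection device: for $s\le m$, define
\begin{equation*}
\widetilde{E}w(x',x_d)=\int_1^{\infty} w(x',-\lambda x_d)\,\psi(\lambda)\,d\lambda,\qquad x_d<0,
\end{equation*}
with a fixed kernel $\psi\in L^1([1,\infty))$, supported in $[1,M]$, decaying at $\infty$ and satisfying the moment conditions $\int_1^\infty (-\lambda)^k \psi(\lambda)\,d\lambda=1$ for $k=0,1,\dots,m-1$; then $\widetilde{E}w\in H^s(\mathbb{R}^d)$ with $\|\widetilde{E}w\|_{H^s(\mathbb{R}^d)}\le C(s)\|w\|_{H^s(\mathbb{R}^d_{+})}$, where the moment conditions ensure matching of traces up to order $m-1$ across $\{x_d=0\}$. (For the Lipschitz case at general $s$ one instead uses Stein's regularized distance $\delta$ equivalent to $\mathrm{dist}(\cdot,\partial\Omega^\sharp)$ together with an averaging operator of the form $\int \psi(\lambda)v(\,\cdot\,-\lambda\delta\,\mathbf{e}_d)\,d\lambda$, but the mechanism and estimates are parallel.)

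Finally, I would pull the half-space extensions back to $\mathbb{R}^d$ via $\Psi_j^{-1}$, multiply by suitable cut-offs supported in $U_j$ to keep them localized, and sum over $j$. Because each $\Psi_j$ is bi-Lipschitz and $\chi_j,\Psi_j$ are fixed data of the domain, the change-of-variables produces constants depending only on $s$ and on $\Omega^\sharp$ through $\{\chi_j,\varphi_j,U_j\}$. Linearity of each step is transparent from the formulas, and the property $\mathfrak{E}v|_{\Omega^\sharp}=v$ is inherited from each local piece (since $\widetilde{E}w$ agrees with $w$ on the half-space and cut-offs sum to $1$ on $\overline{\Omega^\sharp}$).

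The main obstacle is ensuring that the extension is truly \emph{universal} in $s$ (i.e.\ a single operator $\mathfrak{E}$ works for all $s\in\mathbb{N}_0$ with only the constant $C_\mathfrak{E}$ depending on $s$) in the purely Lipschitz setting: naive reflection handles only $s\le 1$, and iterated reflection requires $C^k$ boundary. Circumventing this requires Stein's machinery of the regularized distance and a kernel $\psi$ with all polynomial moments vanishing on $[1,\infty)$ (equivalently $\int \lambda^k\psi(\lambda)d\lambda=0$ for all $k\ge 1$ and $\int\psi\,d\lambda=1$), together with the non-trivial $L^p$-boundedness of the associated maximal-type operator on Lipschitz graphs. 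Once that ingredient is taken as known, the estimate $\|\mathfrak{E}v\|_{H^s(\mathbb{R}^d)}\le C_\mathfrak{E}\|v\|_{H^s(\Omega^\sharp)}$ reduces to term-by-term differentiation of the averaging integral, the chain rule for the Lipschitz change of variables, and summation over the finite cover.
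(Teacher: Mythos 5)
The paper does not prove this statement at all: it is recalled verbatim as the classical Stein extension theorem (cf.\ \cite{Georgoulis17,Dong19} and Stein's book), so there is no ``paper proof'' to compare against. Your sketch is the standard argument and is correct in outline, with one caveat worth making explicit. The main body of your argument routes everything through the boundary-flattening maps $\Psi_j(x',x_d)=(x',x_d-\varphi_j(x'))$, but for a merely Lipschitz graph $\varphi_j$ this change of variables is only bi-Lipschitz, so composition with $\Psi_j^{-1}$ does \emph{not} preserve $H^s$ for $s\ge 2$; the flattening reduction therefore cannot deliver the full range $s\in\mathbb{N}_0$ on its own. You do flag this in your parenthetical and in the final paragraph (regularized distance, kernel with all higher moments vanishing, boundedness of the associated operator), and that is indeed where the real content of Stein's theorem lives --- his construction deliberately works in the original coordinates precisely to avoid the flattening step. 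So the proposal identifies the right machinery, but as written the first two paragraphs describe a reduction that fails for $s\ge 2$ on Lipschitz domains, and the correct mechanism is relegated to asides; a self-contained proof would have to promote the regularized-distance construction to the main line of the argument rather than treat it as a variant.
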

}
We also recall from \cite{Georgoulis17} %{CaGeDSa} 
the $H^1$--$L^2$ inverse inequality for polynomials on a general curved element $K \in \mathcal{T}^\sharp$.
\begin{lem}
\magentaX{Let $K \in \mathcal{T}^\sharp$ satisfy Assumptions \ref{ass:basic_geometry}, \ref{ass:max_card} and (\ref{ineq:global})%na mpei meta th deyterh ypothesh xreiazomaste th mpala rho
. Then, for each $v \in \mathcal{P}^\mathfrak{p} (K)$, the inverse estimates
%\begin{align}
% ||\nabla \textbf{v}||_k^2 \le C^\text{B}_{INV}(p, K)\frac{p^4}{\rho ^2_K}||\text{v}||^2_K,
%\end{align}
hold, %with $\rho_\omega$ denoting the radius of the largest inscribed circle of a domain $\omega\subset \mathbb{R}^d$,
%and $C^\text{B}_{INV}(p, K)$,
 for $K$  $\mathfrak{p}$-coverable:% is: $\min\{\rho_{cov}(K), \rho_{p-cov}(p, K)\}$ otherwise:
%$ \rho_{cov}(K)$,
% otherwise,
%with ?cov (K), ?p?cov(p, K) given by (4.23) and (4.25), respectively.
%\end{lem}
%We consider the two dimensional case $d=2$ and denote by $r$ the length of each of the equal length  $n$ small faces, $h_K$ its diameter and we $r<<h_K$. 
%%
%If $r < h_K /p^2$ , we can cover $K$ by one triangle, namely, the smallest simplex containing $K$. Then $K$ is $p$-coverable and $C^B_\text{INV}(p, K)$ remains bounded, independently
%of $n$. Hence, when the two geometric scales $h_K$ and $r$ are significantly different, $K$
%is essentially a simplex in this context.
%%
%On the other hand, for $p$ large enough and fixed $r$ and $n$, we have $r > h_K /p^2$
%and, hence, we cannot cover $K$ as before. Instead, we consider a family of n non-overlapping simplices (mapped generalized prisms K) $\bar{K} j \subset K$, each defined by one small face of length $r$ and the
%vertex $\mathbf{x}_0$. Then, we have ${c_{as}} = n^{-1}$ in Definition 4.19 and $\rho_{{K}_j} \sim h_K /n$. Since
%also $\rho_K  \sim h_K$ and $q_\ell = 1$, we compute $C^B_\text{INV}(p, K) \sim n^{-1}$. This is reasonable, as sufficiently high polynomial degree $p$ basis functions can resolve the scale of the
%saw tooth face ensemble.
%
\begin{align}
%  \EK{
%||v||^2_{F_i}\le\frac{(p + 1)(p + d)}{\min\limits_{x%\in F_i}{(\mathbf{d}_i}\cdot \mathbf{n})}
%||v||^2_ {K_{F_i}},{\text{ Na bgei ksana anaferete %sto lemma 3.2 })}
%}
%\\
%  \EK{||v||^2_{\partial K} \le C\frac{p^2}{h_K}||v||%^2_K} {\text{ Na bgei ksana anaferete sto lemma 3.2 })}
  {||\nabla v||_K  }  { \le C \frac{{\mathfrak{p}}^2}{h_ K} ||v||_K}, \text{ and } %   {||\nabla v||_K ^2 }  { \le C \frac{{\mathfrak{p}}^4}{h^2_ K} ||v||^2 _K}, \text{ and }
\label{estimate:nabla_v}
%\end{align}
%\begin{align}
\\
{||\nabla v \cdot \mathbf{n}_F||_{F}\le C'\frac{\mathfrak{p}^3}{h^{3/2}_K}
||v|| _K
   }\label{estimate:normal_der_v}
%\\   
%  { {\text{ (with C now also dependent on the shape-regularity
%constant. 
%}}}
%\\
%{\text{(Corollary 4.26--4.24) bgainei kai apo to parapano lema 3.9}}
%  %
%  \label{tr2a}
\end{align}
hold, %with $\rho_\omega$ denoting the radius of the largest inscribed circle of a domain $\omega\subset \mathbb{R}^d$,
%and $C^\text{B}_{INV}(p, K)$,
 %for $K$  $p$-coverable 
and the constants $C$, $C'$ are dependent on the shape-regularity constant.
}
\end{lem}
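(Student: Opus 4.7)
The plan is to establish both inverse inequalities by reducing to classical polynomial estimates on the shape-regular simplices supplied by the $\mathfrak{p}$-coverability hypothesis, then transferring back to the curved element $K$.

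First, for (\ref{estimate:nabla_v}), I invoke Definition \ref{def:p-coverable} to produce a family $\{K_i\}_{i=1}^{m_K}$ of overlapping shape-regular simplices satisfying $|K_i|\ge c_{\mathrm{as}}|K|$ and $\mathrm{dist}(K,\partial K_i)<C_{\mathrm{as}}\,\mathrm{diam}(K_i)/\mathfrak{p}^2$. The polynomial $v\in\mathcal{P}^{\mathfrak{p}}(K)$ extends uniquely to an algebraic polynomial on all of $\mathbb{R}^d$, which in particular lies in $\mathcal{P}^{\mathfrak{p}}(K_i)$ for each $i$. On each shape-regular simplex $K_i$ the classical $H^1$--$L^2$ inverse inequality yields
\[
\|\nabla v\|_{K_i}\le C\,\frac{\mathfrak{p}^2}{\mathrm{diam}(K_i)}\,\|v\|_{K_i}.
\]
The crucial step is to transfer this bound back to $K$. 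Combining $|K_i|\ge c_{\mathrm{as}}|K|$ with a Markov-type continuity argument (the $\mathfrak{p}^{-2}$ buffer in $\mathrm{dist}(K,\partial K_i)$ is precisely calibrated so that polynomials of degree $\mathfrak{p}$ do not oscillate significantly between $K$ and $K_i$) produces $\|v\|_{K_i}\lesssim\|v\|_K$ and $\|\nabla v\|_K\lesssim\|\nabla v\|_{K_i}$ with constants independent of $\mathfrak{p}$ and $h_K$. Combining these with $\mathrm{diam}(K_i)\sim h_K$, by Assumption \ref{ass:max_card}, delivers (\ref{estimate:nabla_v}).

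Next, for the normal-trace estimate (\ref{estimate:normal_der_v}), note that $\nabla v\cdot\mathbf{n}_F$ is a polynomial of degree $\mathfrak{p}-1$ on $K$. Applied on the shape-regular face-attached sub-element $K_F$ supplied by Assumption \ref{ass:basic_geometry}, with (\ref{ineq:global}) ensuring $h_{K_F}\sim h_K$, the classical polynomial trace inequality gives
\[
\|\nabla v\cdot\mathbf{n}_F\|_F\le C\,\mathfrak{p}\,h_K^{-1/2}\,\|\nabla v\|_K .
\]
Chaining with the first estimate yields $\|\nabla v\cdot\mathbf{n}_F\|_F\le C'\,\mathfrak{p}^3\,h_K^{-3/2}\,\|v\|_K$, as claimed. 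The factor $\mathfrak{p}$ in the trace step and $\mathfrak{p}^2$ in the first inverse estimate combine multiplicatively, accounting for the $\mathfrak{p}^3$ exponent.

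The main obstacle is the transfer from the covering simplex $K_i$ to the original curved element $K$ in the proof of (\ref{estimate:nabla_v}). Without the $\mathfrak{p}^{-2}$ buffer, a polynomial of degree $\mathfrak{p}$ could, by the sharp Markov inequality, vary by a factor of order $\mathfrak{p}^2$ across the gap region between $K$ and $\partial K_i$, introducing spurious powers of $\mathfrak{p}$ that would spoil the claimed exponent. Verifying rigorously that the coverability hypothesis is exactly calibrated to absorb this loss into a $\mathfrak{p}$-independent multiplicative constant is the delicate step, and it is precisely what distinguishes the curved general-element setting from the straight-sided simplicial one treated by standard theory.
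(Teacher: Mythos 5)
Your proof of \eqref{estimate:normal_der_v} is exactly the paper's: chain the facewise trace inverse estimate $\|\nabla v\cdot\mathbf{n}_F\|_{F}\le C\,\mathfrak{p}\,h_K^{-1/2}\|\nabla v\|_{K}$ with \eqref{estimate:nabla_v} to accumulate the factor $\mathfrak{p}^{3}h_K^{-3/2}$. For \eqref{estimate:nabla_v} the paper simply cites \cite{Georgoulis17}, whereas you sketch the underlying $\mathfrak{p}$-coverability argument; that sketch is consistent with the cited result (though the transfer step $\|v\|_{K_i}\lesssim\|v\|_{K}$ across the $\mathfrak{p}^{-2}$-width buffer is asserted rather than carried out), so overall your route is essentially the same as the paper's.
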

\begin{proof}
\magentaX{The first estimate comes immediately from \cite{Georgoulis17}
will the second comes from the algebraic calculations: ${||\nabla v \cdot \mathbf{n}_F||_{F}\le C\frac{\mathfrak{p}}{h^{1/2}_K}||\nabla v||_{K} \le
{C}' \frac{{\mathfrak{p}}^2}{h_ K} 
\frac{\mathfrak{p}}{h^{1/2}_K}
||v|| _K =
{C}'
\frac{\mathfrak{p}^3}{h^{3/2}_K}
||v|| _K}$.
}
\end{proof}
\section{Stability estimates} \label{section4}
The fact that the discrete problem is well-posed follows by the inf--sup stability of the bilinear form $A_h%+J_h
$ in the formulation \eqref{cutdg} with respect to the \EK{$\vertiii{\cdot}_{V^c%\sharp
,Q^c%\sharp
}$} norm.  We begin by investigating the properties of the separate forms which contribute to $A_h%+J_h
$.  
A useful observation is that the form $\widetilde{a}_h(\cdot, \cdot)$, %augmented by $j_u(\cdot, \cdot)$, 
is continuous and coercive with respect to the norm \EK{$\vertiii{\cdot }_{V^c%\sharp
}$. For this proof, we will use %of  
that the arbitrarily shaped boundary elements %ghost penalty term $j_u(\cdot, \cdot)$ 
can be properly extended %keep %extend
% good behavior %the control 
from the real %physical 
domain $\Omega^{\sharp}$ to the %entire active 
covering one, %actually on the %extended 
%domain 
$\Omega^{cov}$.
}
%E12}.
%
%\newpage
%\begin{figure}
%\centering
%\begin{tabular}{cc}
%\subfigure{
%\includegraphics[scale=0.30]{images_dg/help}
%}
%\end{tabular}
%%\caption{....}\label{85}
%\end{figure}
%
\begin{lem}%[ \cite{MLLR12} ]
\label{ext}  
There are constants $C_{u%v
}, C_{p}>0$, depending only on the
shape-regularity and the polynomial order and not on the mesh or the location of the boundary, such that the following estimates hold: 
\begin{eqnarray}\label{ext_v}
& \|\nabla \mathbf{%u%
\magentaX{v
_h}} \|^2_{\magentaX{\Omega%\mathcal{T}
^{cov}}
%_{ \mathcal{T}}
}  \leq C_{u%v
}  \|\nabla \mathbf{v}_h \|^2_{\Omega^\sharp}
%+j_u(\mathbf{v}_h, \mathbf{v}_h)
 \leq C_{u%v
}  \|\nabla \mathbf{v}_h \|^2_{\magentaX{\Omega%\mathcal{T}
^{cov}}%_{ \mathcal{T}} 
 },  \quad \text{for all } \mathbf{v}_h \in V_h^\sharp,
\text{ and }&
 %\end{equation}
%for all $\mathbf{v}_h \in V_h$ 
%and
\\
%\begin{equation}
\label{ext_p}
& \| p_h \|^2_{\magentaX{\Omega%\mathcal{T}
^{cov}}}  \leq C_{p}  \| p_h \|^2_{\Omega^\sharp}%+j_p(p_h, p_h)\right)
 \leq C_{p}  \| p_h \|^2_{\magentaX{\Omega%\mathcal{T}
 ^{cov}
 }},   \quad \text{for all } p_h \in Q_h^\sharp.&
\end{eqnarray}
%for all $p_h \in Q_h$.
\end{lem}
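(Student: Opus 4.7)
The right-hand inequality in each of (\ref{ext_v}) and (\ref{ext_p}) is immediate from the inclusion $\Omega^\sharp \subseteq \Omega^{cov}$ together with the non-negativity of the integrands. The substantive content of the lemma is the left-hand inequality in each pair, which is a polynomial inverse-type estimate asserting that the mass carried by $\nabla\mathbf{v}_h$ (respectively $p_h$) on $\Omega^{cov}\setminus\Omega^\sharp$ is uniformly controlled by its mass on $\Omega^\sharp$.

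I would argue element by element. For an interior $K\in\mathcal{T}^\sharp$ one has $K=\mathcal{K}\cap\Omega^\sharp=\mathcal{K}$ and nothing needs to be shown, so fix a boundary element $K$ with $K\cap\Gamma\neq\emptyset$ and let $\mathcal{K}\in\mathcal{T}^{cov}$ be the covering shape-regular simplex supplied by Definition \ref{def:covering_domain} with $K\subset\mathcal{K}$. Since $\mathbf{v}_h|_K\in(\mathcal{P}^\mathfrak{p}(K))^d$, it extends uniquely as a polynomial of the same degree to all of $\mathcal{K}$; in particular $\nabla\mathbf{v}_h|_{\mathcal{K}}\in(\mathcal{P}^{\mathfrak{p}-1}(\mathcal{K}))^{d\times d}$. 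The target element-wise bound
\[
\|\nabla \mathbf{v}_h\|^2_{\mathcal{K}} \,\leq\, C(\mathfrak{p})\,\|\nabla \mathbf{v}_h\|^2_{K}
\]
is then a statement about equivalence of the two seminorms on the finite-dimensional space $(\mathcal{P}^{\mathfrak{p}-1}(\mathcal{K}))^{d\times d}$. Summing over $K\in\mathcal{T}^\sharp$ and invoking the bounded-overlap property of Assumption \ref{ass:max_card} yields (\ref{ext_v}); the proof of (\ref{ext_p}) is identical, with $p_h|_K\in\mathcal{P}^{\mathfrak{p}-1}(K)$ and the plain $L^2$ norm replacing the gradient seminorm.

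To obtain the element-wise equivalence with a constant independent of the mesh, I would pass to a reference simplex $\hat{\mathcal{K}}$ via the affine map normalising $\mathcal{K}$, under which the polynomial space is preserved and the ratio of the two seminorms is invariant up to powers of $h_{\mathcal{K}}$ that cancel between numerator and denominator. On $\hat{\mathcal{K}}$ the estimate reduces to the abstract claim that, for every measurable $\hat{K}\subset\hat{\mathcal{K}}$ containing a ball of radius bounded below by some $r_\ast>0$, one has $\|q\|_{\hat{\mathcal{K}}}\leq C(\mathfrak{p},r_\ast)\,\|q\|_{\hat{K}}$ for all $q\in\mathcal{P}^{\mathfrak{p}-1}(\hat{\mathcal{K}})$; this follows by a standard compactness argument on the unit sphere of the finite-dimensional space. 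The requisite uniform interior ball is then secured by combining Assumption \ref{ass:basic_geometry}, the non-degeneracy (\ref{ineq:global}), and the comparability $h_{\mathcal{K}}\leq C_{\mathrm{diam}}h_K$ of Assumption \ref{ass:max_card}: these guarantee that each boundary $K$ contains a star-shaped sub-piece $K_{F_i}$ whose geometry (star-shapedness with respect to $\mathrm{v}_{0,i}$ combined with $\mathbf{d}_i\cdot\mathbf{n}_{F_i}\geq c_{sh} h_{K_{F_i}}$) forces a ball of radius $\sim c_{sh}h_K\sim h_\mathcal{K}$ to sit inside $K$.

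The principal obstacle I anticipate is exactly this uniform geometric lower bound: arbitrarily shaped boundary elements can be severely anisotropic and the true mesh $\mathcal{T}^\sharp$ is not required to be shape-regular, so it must be verified that the sub-element construction in Assumption \ref{ass:basic_geometry} genuinely prevents $K$ from collapsing to a thin sliver inside $\mathcal{K}$. Once that geometric step is discharged, the remainder is straightforward linear algebra on a finite-dimensional polynomial space together with elementary summation across the mesh.
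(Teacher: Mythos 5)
Your overall strategy coincides with the paper's for the substantive (left-hand) inequalities: reduce everything to an element-wise polynomial norm equivalence $\|q\|^2_{\mathcal{K}}\le C\,\|q\|^2_{K}$ between a boundary element $K\in\mathcal{T}^\sharp$ and its covering simplex $\mathcal{K}\in\mathcal{T}^{cov}$, then sum over the mesh using the bounded-overlap property of Assumption \ref{ass:max_card}; the trivial direction follows from $\Omega^\sharp\subseteq\Omega^{cov}$ in both treatments. The difference lies in how the dangerous case $|K|\ll|\mathcal{K}|$ is excluded, and this is where your argument has a genuine gap.

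You correctly identify the uniform interior ball as the crux, but your claim that it is ``secured'' by Assumption \ref{ass:basic_geometry}, by \eqref{ineq:global} and by $h_{\mathcal K}\le C_{\mathrm{diam}}h_K$ does not hold. Those hypotheses control only the height of each sub-element $K_{F_i}$ over its face $F_i$ and the diameter of $K$ relative to $\mathcal K$, not the measure of $F_i$ or of $K$; the paper explicitly stresses that \eqref{ineq:global} does not imply shape-regularity of the $K_{F_i}$'s, that anisotropic $K_{F_i}$'s with small $F_i$ are admissible, and $\mathcal{T}^\sharp$ itself is nowhere assumed shape-regular. Consequently $|K|/|\mathcal K|$ may be arbitrarily small, and already the constant polynomial $q\equiv 1$ gives $\|q\|^2_{\mathcal K}/\|q\|^2_{K}=|\mathcal K|/|K|$, which is unbounded; no compactness argument on the reference simplex can then produce a constant depending only on $\mathfrak p$ and the shape-regularity of $\mathcal{T}^{cov}$. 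The same objection applies verbatim to \eqref{ext_p}. The paper closes this hole differently: it introduces, inside the proof, an additional resolution hypothesis --- every boundary element of $\mathcal{T}^{cov}$ can be joined to an uncut element $K'\subset\Omega^\sharp$ by a chain of at most $N$ elements sharing facets, each facet participating in finitely many chains --- and transfers the polynomial norm along that chain rather than in a single step from $K$ to $\mathcal K$. To repair your one-step argument you would have to import that hypothesis, or an equivalent fatness condition of the form $|K|\ge c\,|\mathcal K|$, explicitly; as written, the geometric step you yourself flag as the principal obstacle is not discharged.
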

\begin{proof}
\EK{Considering also the  Assumption \ref{ass:basic_geometry}}, %des G3: sel 4 A STABILIZED NITSCHE FICTITIOUS DOMAIN METHOD FOR THE STOKES PROBLEM ANDRÉ MASSING  , MATS G. LARSON , ANDERS LOGG , AND MARIE E. ROGNES 
 we assume that there is an integer $N > 0$ such that for each element 
%{\redhot{$T\in \mathcal{T}^*_\Gamma$}}
\EK{$K \in\mathcal{T}^{cov}$ with $K\cap\Gamma \ne \emptyset$} there exists an element 
%{\redhot{$T'\in \mathcal{T} ^*$}  / $\mathcal{T}^*_\Gamma$} 
$\EK{K'\in \mathcal{T}^{cov}}$  \EK{with $K'\cap \Gamma = \emptyset$}
and at most $N$ elements $\{ K \}^N_{i=1}$ such that $K_1 = K$, $K_N = K'$ and $K_i \cap K_{i+1} \in 
%{\redhot{\partial _i \mathcal{T}^*}} 
\EK{\partial _i \mathcal{T}^{cov}}$, $i = 
%{\redhot{1}} 
\EK{1}, . . . N-1$. 
%{\redhot{In other words the number of facets to be crossed in order to ``walk'' from a cut element $T$ to a non-cut element $T'\subset \Omega$ is bounded.}} 
 \EK{In particular, this means that the number of facets we need to cross so that we pass from the aforementioned element $K$ to $K'\subset \Omega^\sharp$ is bounded.}
Similar assumptions were made by \cite{AreKarKat22}, see also references therein,
%Burman and Hansbo [11], Hansbo and Hansbo [18] for the two dimensional case and 
which ensure that $\Gamma$ is reasonably resolved by 
%\redhot{$\mathcal{T}^*$}
 \EK{$\mathcal{T}^{cov}$}. 
\EK{For} the %\blueX{second}\redX
{first}
 inequality %\blueX{of} 
\EK{\eqref{ext_v}} 
%{\redhot{(5.2)}}. D
we compose the norm over 
%{\redhot{$\Omega^*$}}
 \EK{$\Omega^{cov}$} into sums over \EK{internal} 
%\redhot{non-cut} 
and \EK{boundary $\mathcal{T}^{cov}$ elements, $\|\cdot\|_{\mathcal{T}^{cov}} = \sum{\|\cdot\|}_{\mathcal{T}^{cov}\text{-boundary-}K\text{'}s} +\sum{\|\cdot\|}_{\mathcal{T}^{cov}\text{-internal }K\text{'}s} $}. 
% \redhot{cut elements}. 
Let $K_0 
%\redhot{\in \mathcal{T}^*_\Gamma}
$
 be %\redhot{a cut element} 
\EK{a boundary element of $\mathcal{T}^{cov}$}. %By the geometric condition of Assumption \ref{ass:basic_geometry},
%G3 (cf. Section 2.1) Prosoxi h ypothesi einai mono gia bontary elements to exo alaksei \TODO naballo mallon k ta G1-G2:
%We assume that T ∗ and the boundary Γ satisfy the following geometric conditions:
%• G1: The intersection between Γ and a facet F ∈ ∂iT ∗ is simply connected;
%that is, Γ does not cross an interior facet multiple times.
%• G2: For each element T intersected by Γ, there exists a plane ST and a
 Then, there exists a $K_N \subset 
%\redhot{\Omega}
\EK{\Omega^\sharp}$ and at most $N - 1$ \EK{$\mathcal{T}^{cov}$-boundary} elements $K_i 
%\redhot{\in \mathcal{T}^*_\Gamma}
$ and facets $K_{i-1} \cap K_i = F_i 
%\redhot{\in \mathcal{F}^*_\Gamma}
$ that has to be overtaken in order to go across from $K_0$ to $K_N$. Considering the aforementioned shape-regularity of the mesh, each facet \EK{corresponding to $\mathcal{T}^{cov}$-boundary elements} $F 
%\redhot{\in \mathcal{F}^*_\Gamma}
$ will only be involved in a finite number of such crossings. 
\EK{Additionally, let $v$ be %\redX{piece-wise} 
a polynomial function %\blueX
{of order $\mathfrak{p}$} defined on %\blueX
{both} the %\redX{extended}
% \blueX
{boundary} element %\blueX
{$K\in \mathcal{T}^\sharp$ and its corresponding extended $\mathcal K\in \mathcal{T}^{cov}$}%\redX{$\bar{K} = K_1 \cap K_2$, and $v_i$ to be the restriction of $v$ to $K_j$ for $j = 1, 2$}
. Then there is a constant $C > 0$, depending only on \TODO%r
{the shape-regularity of %$\mathcal{T}$ 
$\mathcal{T}^{cov}$} and the polynomial order $\mathfrak{p}%\redX{ = \max(\mathfrak{p}_{v_1}, \mathfrak{p}_{v_2})}
$
 of $v$, such that
%\begin{equation}\label{eqn:5.1}
$\|v\|^2_{%\blueX
{\mathcal{K}}%\redX{K_1}
} 
\le C \|v\|^2_{%\blueX
{K}%\redX{K_2}
} %\redhot{+ JUMPS)}.\redhot{(5.1)}
$}
%\end{equation}%Applying 
%Lemma 5.1 
%\EK{Lemma \ref{lemma_5.1}}
%\redX{as the two norms $\|v_2 \|_{K_1} $ and $\|v_2 \|_ {K_2}$ are equivalent,} %again 
%by shape regularity
. 
%
%Equivalent norms (APO wOUIKI)
%
%Suppose that p p and q q are two norms (or seminorms) on a vector space X . X. Then p p and q q are called equivalent, if there exist two positive real constants c c and C C with c > 0 c>0 such that for every vector x ∈ X , {\displaystyle x\in X,}
%c q ( x ) ≤ p ( x ) ≤ C q ( x ) .
%{\displaystyle cq(x)\leq p(x)\leq Cq(x).}
%The relation " p p is equivalent to q q" is reflexive, symmetric ( c q ≤ p ≤ C q {\displaystyle cq\leq p\leq Cq} implies 1 C p ≤ q ≤ 1 c p {\displaystyle {\tfrac {1}{C}}p\leq q\leq {\tfrac {1}{c}}p}), and transitive and thus defines an equivalence relation on the set of all norms on X . X. The norms p p and q q are equivalent if and only if they induce the same topology on X . X.[7] Any two norms on a finite-dimensional space are equivalent but this does not extend to infinite-dimensional spaces.[7]
%
Here, each component of $\nabla \bm{v}_h$ has been treated as  $v$   iteratively to each neighboring pair $\{K_{i-1}, K_i\}$ and we take the desired estimate. % yields the desired estimate.
The %\blueX{second}\redX
{first} inequality of \eqref{ext_p} 
%\redhot{(5.3) and (5.4)} 
follows similarly %. %by the analogous argument: apply \EK{Lemma}
%\redhot{Lemma 5.1} 
following the same procedure %\ref{lemma_5.1} to 
for $q_h$\EK{.} 
%\redhot{and recall that $[q_h]_F = 0$ for $q_h \in P^1_h$.}
%
The %\blueX{first}\redX
{second} inequalities of \EK{\eqref{ext_v}-\eqref{ext_p}} %\redhot{(5.2)-(5.4)} 
%rely on the shape regularity, allowing us to bound $h_F$ by $h$, and the trace and inverse estimates of Section 4.1 applied to each facet of the boundary zone sums. The upper bounds immediately follow.
can be derived straightforwardly.
%
%Remark 5.2. Burman and Hansbo [11] presented the analogous result to (5.2) for the Poisson problem with continuous piecewise linear finite elements. The formulation given here, together with Lemma 5.1, reveals the basic structure of jump-penalty-based stabilization terms for fictitious domain formulations and can be applied to various types of norms and elements, including higher-order elements.
\end{proof}
%
%\begin{lem}\label{lemma_5.1}
%Lemma 5.1. Let $\redhot{\mathcal{T}}\EK{\mathcal{T}^\sharp} = \{T\}$ be a tessellation consisting of shape-regular elements $T$ and let $T_1$, $T_2 \in \redhot{\mathcal{T}}\EK{\mathcal{T}^\sharp}$ be two elements sharing a common face $F$. Furthermore, let $v$ be a
%piecewise polynomial function defined relative to the macro-element $\bar{T} = T_1 \cap T_2$. Let $v_i$ be the restriction of $v$ to $T_i$ for $i = 1, 2$. Then there is a constant $C > 0$, depending only on the shape-regularity of $\mathcal{T}$ and the polynomial order $p = \max(ord(v_1), ord(v_2))$ of v, such that
%\begin{equation}\label{eqn:5.1}
%\|v\|^2_{T_1} \le C \redhot{(} \|v\|^2_{T_2} \redhot{+ JUMPS)}.\redhot{(5.1)}
%\end{equation}
%\redhot{where $\partial ^j_n v = \sum_|\alpha| ... ... .... .... ....$}
%%P
%%|α|=j Dαv(x)n
%%α for multi-index α = (α1, . . . , αd), |α| =
%%P
%%i αi and
%%n
%%α = n
%%α1
%%1 n
%%α2
%%2
%%· · · n
%%αd
%%d
%.
%\end{lem}
%
%
%
With this preliminary result in place, we are now ready to prove discrete coercivity of $\widetilde{a}_h%+j_u
$ and continuity:
\begin{lem}\label{a_coerc}\label{a_bound_revised}
For suitably large  discontinuity penalization   parameter $\sigma>0$ in the definition of the bilinear form $a_h(\cdot, \cdot)$, there exists a constant $c_a>0$, such that  
\begin{equation}\label{coercivit}
 {\green{\widetilde{a}_h(\mathbf{v}_h, \mathbf{v}_h) \ge}} c_{coer} \vertiii{\mathbf{v}_h}_{V^\sharp}^2%+j_u(\mathbf{v}_h, \mathbf{v}_h)
,
\end{equation} 
for any $\mathbf{v}_h \in V_h^\sharp$,
and
%
%
%\begin{lem}[Continuity]\label{a_bound_revised}
there exist constants $C_a , C_b>0$, such that 
\begin{eqnarray}
&{\green{\widetilde{a}_h(\mathbf{u}_h, \mathbf{v}_h) \leq C_a \vertiii{\mathbf{u}_h}%_{V^{\sharp}}
\cdot \vertiii{\mathbf{v}_h}%_{V^{\sharp}}
,  \quad \text{for every } \mathbf{u}_h, \mathbf{v}_h \in V_h^{\sharp},
}}
&
\label{cont3_revised}
\\
&{\red{
\widetilde{a}_h(\mathbf{u}, \mathbf{v}_h) \leq C_a \vertiii{\mathbf{u}}\cdot  \vertiii{\mathbf{v}_h}, \quad \text{for every } (\mathbf{u}, \mathbf{v}_h) \in ( [H^{k+1}(\Omega^\sharp)\cap H^1_0(\Omega^\sharp) ]^d+ V_h^{\sharp} )\times V_h^{\sharp},
}}&\label{cont1_revised}\\
&b_h(\mathbf{u}, p_h) \leq C_b \vertiii{\mathbf{u}} \cdot \vertiii{p_h},  \quad \text{for every } (\mathbf{u}, p_h) \in ( [H^{k+1}(\Omega^\sharp)\cap H^1_0(\Omega^\sharp) ]^d+ V_h^{\sharp} )\times Q_h^{\sharp},& \label{cont2_revised} \\
&b_h(\mathbf{u}_h, p) \leq C_b \vertiii{\mathbf{u}_h} \cdot \vertiii{p},  \quad \text{for every } (\mathbf{u}_h, p) \in V_h^{\sharp}\times ( [H^k(\Omega^\sharp)\cap L^2_0(\Omega^\sharp) ]+ Q_h^{\sharp} ).& \label{cont4_revised}
\end{eqnarray} 
%\end{lem}
%
%\begin{proof}
%The proof is standard and it is omitted for brevity. 
%\end{proof} 
%
%
\end{lem}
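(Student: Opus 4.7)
The plan is to prove coercivity and all four continuity bounds by standard $hp$-dG bookkeeping: expand each bilinear form term-by-term, apply Cauchy--Schwarz with carefully chosen weights, absorb the resulting face and boundary trace quantities using the $L^2$-stability of $\mathbf{\Pi}_{L^2}$ combined with the trace inverse inequalities \eqref{ineq:inv_est} and \eqref{eq:Fi_to_K} (plus the $H^1$--$L^2$ inverse \eqref{estimate:nabla_v} where needed), and finally lift any volume term on $\Omega^\sharp$ to its $\Omega^{cov}$-counterpart via Lemma~\ref{ext}.

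For coercivity \eqref{coercivit} I would start from the expression of $\widetilde{a}_h(\mathbf v_h,\mathbf v_h)$: the three positive terms $\|\nabla\mathbf v_h\|^2_{\Omega^\sharp}$, $\|\sigma^{1/2}\mathbf v_h\|^2_\Gamma$ and $\sum_F\|\sigma^{1/2}[\![\mathbf v_h]\!]\|^2_F$ already match summands of $\vertiii{\mathbf v_h}^2_{V^c}$, while the two indefinite interface/boundary consistency terms involving $\mathbf{\Pi}_{L^2}\nabla\mathbf v_h$ are split by Cauchy--Schwarz and Young's inequality with a small parameter $\eta>0$. The $L^2$-stability of $\mathbf{\Pi}_{L^2}$ followed by \eqref{eq:Fi_to_K} yields $\|\llrrparen{\mathbf{\Pi}_{L^2}\nabla\mathbf v_h}\|^2_F\le C\mathfrak p^2 h_K^{-1}\|\nabla\mathbf v_h\|^2_K$, so the choice $\sigma\gtrsim \mathfrak p^2/h_F$ together with a sufficiently small $\eta$ absorbs the indefinite terms into a fraction of $\|\nabla\mathbf v_h\|^2_{\Omega^\sharp}$ and of the penalty norms. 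Lemma~\ref{ext} then upgrades $\|\nabla\mathbf v_h\|^2_{\Omega^\sharp}$ to a constant multiple of $\|\nabla\mathbf v_h\|^2_{\Omega^{cov}}$, finishing the coercivity bound in $\vertiii{\cdot}_{V^c}$.

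The continuity estimates \eqref{cont3_revised}--\eqref{cont4_revised} follow by applying Cauchy--Schwarz to every summand of $\widetilde a_h$ and $b_h$. Each interface/boundary integrand splits into two weighted factors which are precisely summands of the two triple-bar norms: jumps and boundary traces of the velocity take the $\sigma^{1/2}$ weight, while normal-trace factors $\llrrparen{\mathbf{\Pi}_{L^2}\nabla\mathbf u}\cdot\mathbf n_F$ or $\llrrparen p$ take the $\mathfrak p^{-1}h_F^{1/2}$ weight and are controlled by the covering-mesh term $\sum_{K\in\mathcal T^{cov}}\|\mathfrak p^{-1}h_K^{1/2}\nabla\mathbf v\cdot\mathbf n\|^2_{\partial K}$ in $\vertiii{\mathbf u}^2$ (respectively the analogous pressure term in $\vertiii p^2$). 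For $\mathbf u\in[H^{k+1}(\Omega^\sharp)\cap H^1_0(\Omega^\sharp)]^d$ in \eqref{cont1_revised}--\eqref{cont2_revised}, the $L^2$-stability $\|\mathbf{\Pi}_{L^2}\nabla\mathbf u\|_K\le\|\nabla\mathbf u\|_K$ combined with Lemma~\ref{lem:Fi_to_K} applied to the projected polynomial keeps the estimate in the same norm without requiring higher-than-$H^1$ regularity at the trace. For $b_h$, the bulk integral is bounded directly by $\|p\|_{\Omega^\sharp}\|\nabla\mathbf u\|_{\Omega^\sharp}$ and the face/boundary integrals are handled identically.

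The main obstacle I anticipate is the $hp$-calibration of $\sigma$: it must simultaneously dominate the trace-inverse constants of Lemma~\ref{lem:Fi_to_K} in the coercivity step with room for absorption, and match the scaling of the weights appearing in $\vertiii{\cdot}$ so that $c_{coer}$, $C_a$, $C_b$ come out independent of $h$, $\mathfrak p$, and of how the interface $\Gamma$ cuts through $\mathcal T^{cov}$. A second subtle point is that the projection $\mathbf{\Pi}_{L^2}$ inside $\widetilde a_h$ is essential exactly so that Lemma~\ref{lem:Fi_to_K} can be applied to the polynomial $\mathbf{\Pi}_{L^2}\nabla\mathbf u$ in \eqref{cont1_revised} even when $\mathbf u$ itself is not a polynomial; verifying that this projection does not spoil the covering-mesh bound for arbitrarily shaped boundary elements and keeps the final constant $\mathfrak p$-independent is the most delicate bookkeeping of the argument.
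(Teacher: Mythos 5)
Your proposal follows essentially the same route as the paper: expand $\widetilde a_h(\mathbf v_h,\mathbf v_h)$ into its three positive terms plus the indefinite consistency terms, split the latter by Young's inequality, absorb them using the $L^2$-stability of $\mathbf{\Pi}_{L^2}$ together with the trace-inverse estimates \eqref{eq:Fi_to_K} and \eqref{estimate:normal_der_v} (the paper's bounds \eqref{par1}--\eqref{par2}), calibrate $\sigma$ against the resulting inverse constants, and pass to $\Omega^{cov}$ via Lemma~\ref{ext}; the continuity bounds are then the standard weighted Cauchy--Schwarz bookkeeping, which the paper itself omits as routine. No gaps; this matches the paper's argument.
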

\begin{proof}
The proof is based on standard arguments% for the usual symmetric interior penalty method
. In particular, for any ${\green{\lambda}}%epsilon
 > 0 $, we have
% \TODO{(oi normes sto Gamma isos athroisma sto $\mathcal{T}^\sharp\cap \Gamma$? Pantou PiL2?)}
\begin{align}
\widetilde{a}_h(\mathbf{v}_h, \mathbf{v}_h)
= \|\nabla \mathbf{v}_h \|^2_{\Omega^\sharp} 
+ %\sigma
%  (
 \|
%\mathfrak{p}^{1/2}
{\green{\sigma^{1/2}}}%h^{-1/2}
\mathbf{v}_h \|^2_{\Gamma} + \sum_{F\in \mathcal{F}^{\sharp}_{int}} \|
%\mathfrak{p}^{1/2}
%h^{-1/2}
{\green{\sigma^{1/2}}}
[\![\mathbf{v}_h ]\!] \|^2_{\EK{F}%\cap \Omega^\sharp
}
%\right) 
  - 2\int_{\Gamma}\mathbf{v}_h {\mathbf{\Pi_{L^2}}}(\nabla \mathbf{v}_h)\cdot \mathbf{n}_{\Gamma}\,d\gamma
  \notag 
\\
- 2\sum_{F\in \mathcal{F}^{\sharp}_{int}}\int_{\EK{F}%\cap \Omega^\sharp
}\llrrparen{{\mathbf{\Pi _{L^2}}}(\nabla \mathbf{v}_h)}\cdot \mathbf{n}_F [\![\mathbf{v}_h ]\!]\,d\gamma
\notag \\
\geq  \|\nabla \mathbf{v}_h \|^2_{\Omega^\sharp} + %\sigma
% ( 
 \|
%\mathfrak{p}^{1/2}
%h^{-1/2}
{\green{\sigma^{1/2}}}
\mathbf{v}_h \|^2_{\Gamma} + \sum_{F\in \mathcal{F}^{\sharp}_{int}} \|
%\mathfrak{p}^{1/2}
%h^{-1/2}
{\green{\sigma^{1/2}}}
[\![\mathbf{v}_h ]\!] \|^2_{\EK{F}%\cap  \Omega^\sharp
}
% \right)  
-  {\green{\lambda}}{\magenta{\sigma}}%\epsilon  
  \|{\green{\sigma^{-1/2}}}%h^{1/2}
{\mathbf{\Pi _{L^2}}}(\nabla\mathbf{v}_h)\cdot \mathbf{n}_{\Gamma} \|^2_{\Gamma}
-{\green{\lambda}}%\epsilon
^{-1} 
{\magenta{\sigma^{-1}}}
 \| {\green{\sigma^{1/2}}}%h^{-1/2}
\mathbf{v}_h  \|^2_{\Gamma}
\notag 
\\
- {\green{\lambda}}{\magenta{\sigma}}%\epsilon 
\sum_{F\in \mathcal{F}^{^\sharp}_{int}} \|{\green{\sigma^{-1/2}}}%h^{1/2}
\llrrparen{\mathbf{\Pi_{L^2}}(\nabla\mathbf{v}_h)}\cdot \mathbf{n}_F \|^2_{\EK{F}%\cap \Omega^\sharp
} - 
{\green{\lambda^{-1}}}{\magenta{\sigma^{-1}}}%\epsilon^{-1} 
\sum_{F\in \mathcal{F}^{\sharp}_{int}} \|{\green{\sigma^{1/2}}}%h^{-1/2}
[\![\mathbf{v}_h ]\!] \|^2_{\EK{F}%\cap \Omega^\sharp
}
  \notag \\
 \geq   \|\nabla \mathbf{v}_h \|^2_{ \Omega ^\sharp } +
 ({\green{1-\lambda^{-1}{\magenta{\sigma}^{-1}}}}%\sigma-\epsilon^{-1}
)
 (  \|{\green{\sigma^{1/2}}}%h^{-1/2}
\mathbf{v}_h \|^2_{\Gamma} + \sum_{F\in \mathcal{F}^{\sharp}_{int}} \|{\green{\sigma^{1/2}}}%h^{-1/2}
[\![\mathbf{v}_h ]\!] \|^2_{\EK{F}%\cap \Omega^\sharp
} )  \notag
\\
 - {\green{\lambda}}{\magenta{\sigma}}%\epsilon
 (\sum_{F\in \mathcal{F}^{\sharp}_{int}} \|
{\green{\sigma^{-1/2}}}%h^{1/2}
\llrrparen{{\mathbf{\Pi _{L^2}}}({\nabla\mathbf{v}_h})}\cdot \mathbf{n}_F \|^2_{\EK{F}%\cap \Omega^\sharp
} +   \|
{\green{\sigma^{-1/2}}}%h^{1/2}
{\mathbf{\Pi _{L^2}}}(\nabla\mathbf{v}_h)\cdot \mathbf{n}_{\Gamma} \|^2_{\Gamma} ). \label{coerc}
\end{align}
\normalsize
{\magenta{\EK{Young's inequality $ab\le a^2/(2\epsilon)+\epsilon b^2/2$} and inverse estimates %\eqref{deriv_estimate1} and \eqref{deriv_estimate2}.  
\eqref{eq:Fi_to_K}, \eqref{estimate:normal_der_v}, are applied  to the latter term in (\ref{coerc}) to achieving a lower bound.}} % forreadily obtained through .
In particular, note for $F \in \mathcal{F}_{int}^{\sharp}$ with $F=\partial K \cap \partial K^{'}$ that
\begin{align*}
 \|{\green{\sigma^{-1/2}}}\llrrparen{{\mathbf{\Pi _{L^2}}}(\nabla\mathbf{v}_h)}\cdot \mathbf{n}_F \|_{\EK{F}
%\cap \Omega^\sharp
}
\leq\frac{ ( \|{\green{\sigma^{-1/2}}}{\mathbf{\Pi _{L^2}}}(\nabla\mathbf{v}_h)\cdot \mathbf{n}_F \|_{F\subset \partial K  } +  \|{\green{\sigma^{-1/2}}}{\mathbf{\Pi _{L^2}}}(\nabla\mathbf{v}_h)\cdot \mathbf{n}_F \|_{F\subset \partial K^{'} } )}{2}
\\
\le%\lesssim
\frac{
{\green{\mathfrak{p}^2}} (
C_{INV}({\green{\mathfrak{p}}}, K, F_i)\frac{|F_i|}{|K|}||
{\mathbf{\Pi _{L^2}}}({\green{\sigma^{-1/2}}}\nabla\mathbf{v}_h)\cdot \mathbf{n}_F
||^2_K
+
C_{INV}({\green{\mathfrak{p}}}, K', F_i)\frac{|F_i|}{|K'|}||
{\mathbf{\Pi _{L^2}}}({\green{\sigma^{-1/2}}}\nabla\mathbf{v}_h)\cdot \mathbf{n}_F||^2_{K'}
 )}{2}
\\ 
\le \frac{1}{2}{\green{\mathfrak{p}^2}} \max_{\kappa=K,K^{'}} \{
C_{INV}({\green{\mathfrak{p}}}, \kappa, F_i)\frac{|F_i|}{|\kappa|}
 \|{\green{\sigma^{-1/2}}}{\mathbf{\Pi _{L^2}}}(\nabla{\mathbf{v}_h}) \|_{\kappa} \}
%{\magenta{na  grafei opos i 4.12???}}
\\ 
\le %\TODO{????}
C_{\mathfrak{p},1}\max_{{{\kappa%\mathcal{K}
}} = K,K^{'}} \{ \|{{\green{\sigma^{-1/2}}}\mathbf{\Pi _{L^2}}}(\nabla{\mathbf{v}_h}) \|_{\kappa} \}
\end{align*}
and then summing over all %interior
 faces in the active mesh, %we estimate
\begin{equation}\label{par1}
\sum_{\substack{F\in \mathcal{F}_{int}%h
^{\sharp%int
}
\\ F=\partial K \cap \partial K'}
}
 \|{\green{\sigma^{-1/2}}}\llrrparen{\TODO{{\mathbf{\Pi _{L^2}}}}(\nabla\mathbf{v}_h)}\cdot \mathbf{n}_F \|^2_{\EK{F}%\cap \Omega^\sharp
}
%\le %\lesssim
%  \widehat{C}_1  \|\nabla \mathbf{v}_h\right\|_{\Omega^{cov}}^2 ???nabgei???
{\magenta{\le %\lesssim
  {C}_{max}  \|\nabla \mathbf{v}_h \|_{\Omega^{\sharp}}^2}}.
\end{equation}
Likewise, using \eqref{estimate:normal_der_v}%{deriv_estimate2}
\begin{eqnarray}\label{par2}
 \|{\green{\sigma^{-1/2}}}%h^{1/2}
\TODO{{\mathbf{\Pi _{L^2}}}}(\nabla\mathbf{v}_h)\cdot \mathbf{n}_{\Gamma} \|^2_{\Gamma} = 
\sum_{\TODO{K%^\sharp
}\cap \Gamma\neq \emptyset%, \TODO{\forall T\in T^{cov}}
}
 \|
{\green{\sigma^{-1/2}}}%h^{1/2}
\TODO{{\mathbf{\Pi _{L^2}}}}(\nabla\mathbf{v}_h)\cdot \mathbf{n}_{\Gamma} \|^2_{\TODO{K%^\sharp
}\cap \Gamma}  \nonumber
\\
\le \sum_{
K%^\sharp
\cap \Gamma\neq \emptyset
}
{\green{\sigma^{-1/2}}}
\sum_{i \in I_F^K}
C_{INV}({\green{\mathfrak{p}}}, K, F_i^K)\frac{{\green{\mathfrak{p}}}^2 |F_i|}{|K|}
||\mathbf{\Pi _{L^2}}(\nabla\mathbf{v}_h)\cdot \mathbf{n}_{\Gamma}%\right\|^2
%_{\TODO{T}\cap \Gamma}
||^2_K
%\\
\nonumber\\ \le \sum_{
K%^\sharp
\cap \Gamma\neq \emptyset
}
{\green{\sigma^{-1/2}}}|I_F^K| \max_{i \in I_F^K} 
\{
C_{INV}({\green{\mathfrak{p}}}, K, F_i^K)
\}\frac{{\green{\mathfrak{p}}}^2 |F_i|}{|K|}
||\nabla\mathbf{v}_h
||^2_K
\nonumber\\
\le %\TODO{????}
C _{\mathfrak{p},2}%\lesssim 
\sum_{\TODO{\EK{K}}\cap \Gamma\neq \emptyset%, \TODO{\forall T\in T^{cov}}
} \|\nabla\mathbf{v}_h \|^2_{\EK{K}}  
{\magenta{%\le C'_{max}%\widehat{C}_1 
%%\lesssim
 %  \|\nabla \mathbf{v}_h\right\|^2_{\Omega^{cov}} ???nabgei????
\le C'_{max}%\widehat{C}_1 
%\lesssim
   \|\nabla \mathbf{v}_h \|^2_{\Omega^{\sharp}}}}. 
\end{eqnarray}
Then, application of (\ref{ext_v}) verifies, for a suitable   choice of ${\green{\lambda}}%\epsilon
$, that  the terms in (\ref{par1}) and (\ref{par2}) can be  dominated by the leading two terms in (\ref{coerc}). Indeed, letting {\magenta{$C_{max}%\widehat{C}_1
$ and  $C'_{max}%\widehat{C}_2
$}} the constants in (\ref{par1}) and \eqref{estimate:normal_der_v} %\ref{deriv_estimate2}
 respectively and  collecting all estimates, we conclude 
\begin{align*}
\widetilde{a}_h(\mathbf{v}_h, \mathbf{v}_h) 
%& 
\geq  (C_{u}^{-1}-\lambda{\magenta{\sigma}} (C_{max}+C'_{max}%\widehat{C}_1+\widehat{C}_2
)
 ) \|\nabla \mathbf{v}_h \|^2_{\Omega^{%cov ???^
\sharp%???
}} 
%\\
%&
%\quad 
+(1-\lambda^{-1}{\magenta{\sigma}^{-1}})
\big( % 
\|\sigma^{1/2}%h^{-1/2}
\mathbf{v}_h
%\right
\|^2_{\Gamma}
+ \sum_{F\in \mathcal{F}_{int}^{\sharp}}
\| 
\sigma^{1/2}%h^{-1/2}
[\![\mathbf{v}_h ]\!]
%\right
\|^2_{\EK{F}%\cap \Omega^\sharp
}%\right
\big).
\end{align*}
Coercivity (\ref{coercivit}) is already verified  for  $1> \lambda^{-1}{\magenta{\sigma}^{-1}}>C_u
 (C_{max}+C'_{max}%\widehat{C}_1+\widehat{C}_2
%\widehat{C}_1+\widehat{C}_2
)$, or  $1< \lambda{\magenta{\sigma}}<C_u
 (C_{max}+C'_{max}%\widehat{C}_1+\widehat{C}_2
%\widehat{C}_1+\widehat{C}_2
)$ which is valid for $\lambda = (1+C_u(C_{max}+C'_max))/(2\sigma)$. The  corresponding coercivity constant is $c_a=\min \{C_{u}^{-1}-\lambda{\magenta{\sigma}} (
C_{max}+C'_{max}%\widehat{C}_1+\widehat{C}_2
), 1 - \lambda^{-1}{\magenta{\sigma}^{-1}} \}$.

The proof of the continuity is standard and it is omitted for brevity.
\end{proof}
We recall also from \cite{Georgoulis17,Dong19} the following lemma and corollary that will be used next. 
\begin{lem}\label{lem:hp_projection_error_bounds}
 Let $K \in \mathcal{T}^\sharp$ satisfy Assumptions \ref{ass:basic_geometry} %4.1 
 and \ref{ass:max_card}%4.30new4.28
,
  and let $\mathcal{K} \in \mathcal{T}^{cov}$ be the
corresponding simplex with $K \subset \mathcal{K}$ as in Definition \ref{def:covering_domain}%4.29-new4.27
. Suppose that $v \in L^2(\Omega^\sharp)$
is such that the extension $\mathfrak{E}v|_\mathcal{K} \in H^{l_K} (\mathcal{K})$, for some $l_K \ge 0$, and that Assumption \ref{ass:max_card} %4.30 
 is satisfied.
Then, there exists an operator $\pi_\mathfrak{p} : H^{l_K}(\mathcal{K}) \to \mathcal{P}^\mathfrak{p}(\mathcal{K})$, such that
\begin{equation}\label{ineq:proj_K_to_H_elk}
||v - \pi _\mathfrak{p}v||_{H^q(K)} \le C_1 \frac{h_K^{s_K-q}}{\mathfrak{p}^{l_K-q}}||\mathfrak{E}v||_{H^{l_K} (\mathcal{K})}, %\quad l_K \ge 0,
\end{equation}
for $0 \le q \le l_K$, and
\begin{equation}\label{ineq:proj_Fi_to_H_elk}
 ||v - \pi_\mathfrak{p} v||_{F_i} \le \mathcal{C}^\frac{1}{2}_\text{ap}(\mathfrak{p}, K, F_i)|F_i|^\frac{1}{2}
 \frac{h_K^{s_K-d/2}}{\mathfrak{p}^{l_K-1/2}}||\mathfrak{E}v||_{H^{l_K} (\mathcal{K})}, \quad l_K \ge d/2,
 \end{equation}
with
$$
\mathcal{C}_\text{ap}(\mathfrak{p}, K, F_i) := C_2 \min\{
\frac{h^d_K}{|F_i| \sup\limits_{\mathbf{v}_{0,i}\in K} \min\limits_{x\in F_i} (\mathbf{d}_i \cdot \mathbf{n})},\mathfrak{p}^{d-1}\},$$
$s_K = \min\{\mathfrak{p} + 1, l_K\}$, and $C_1$, $C_2 > 0$ constants depending only on the shape-regularity of $\mathcal{K}$, $q$, $l_K$, on $C_\text{diam}$ (from Assumption \ref{ass:max_card}%4.30
) and on the domain $\Omega^\sharp$.
\end{lem}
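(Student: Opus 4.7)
The plan is to lift the problem to the shape-regular covering simplex $\mathcal{K}$, invoke the classical $hp$-version polynomial approximation theory on simplices, and then restrict back to $K \subset \mathcal{K}$ using Assumption \ref{ass:max_card}. Specifically, given $v \in L^2(\Omega^\sharp)$ with $\mathfrak{E}v|_\mathcal{K} \in H^{l_K}(\mathcal{K})$, I would define $\pi_\mathfrak{p} v := \widehat{\pi}_\mathfrak{p}(\mathfrak{E}v)$, where $\widehat{\pi}_\mathfrak{p} : H^{l_K}(\mathcal{K}) \to \mathcal{P}^\mathfrak{p}(\mathcal{K})$ is the Babu\v{s}ka--Suri / Schwab projection on the shape-regular simplex $\mathcal{K}$. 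That projection satisfies the standard simplex bound
\begin{equation*}
\|\mathfrak{E}v - \widehat{\pi}_\mathfrak{p}(\mathfrak{E}v)\|_{H^q(\mathcal{K})} \le C\, \frac{h_\mathcal{K}^{s_K-q}}{\mathfrak{p}^{l_K-q}}\,\|\mathfrak{E}v\|_{H^{l_K}(\mathcal{K})},\qquad 0\le q\le l_K.
\end{equation*}
Since $v = \mathfrak{E}v$ on $K \subset \mathcal{K}$ and $h_\mathcal{K} \le C_{\text{diam}} h_K$, restriction to $K$ immediately produces \eqref{ineq:proj_K_to_H_elk}, with the hidden constant depending only on shape-regularity of $\mathcal{K}$, on $C_{\text{diam}}$, and on $q,l_K$.

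\textbf{Face bound.} For \eqref{ineq:proj_Fi_to_H_elk} I would split
\begin{equation*}
v - \pi_\mathfrak{p} v \;=\; (v - \pi_{\mathfrak{p}-1} v) \;+\; (\pi_{\mathfrak{p}-1} v - \pi_\mathfrak{p} v),
\end{equation*}
where the first summand is handled by a classical trace inequality on $\mathcal{K}$ (legitimate because $l_K \ge d/2$ ensures the trace of $v - \pi_{\mathfrak{p}-1} v$ on $F_i$ is well defined), yielding a factor of $h_K^{s_K - d/2}\mathfrak{p}^{-(l_K - 1/2)}\|\mathfrak{E}v\|_{H^{l_K}(\mathcal{K})}$ after balancing the $L^2$ and $H^1$ contributions from \eqref{ineq:proj_K_to_H_elk}. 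The second summand is a polynomial in $\mathcal{P}^\mathfrak{p}(K)$, so the inverse trace estimate \eqref{eq:Fi_to_K} of Lemma \ref{lem:Fi_to_K} applies directly on the star-shaped sub-element $K_{F_i}$ guaranteed by Assumption \ref{ass:basic_geometry}, giving the factor $(C_{\text{INV}}(\mathfrak{p},K,F_i)|F_i|/|K|)^{1/2}$ times the $L^2(K)$ norm of the polynomial difference, which is again controlled by \eqref{ineq:proj_K_to_H_elk}. Recombining reproduces the quantity $\mathcal{C}_{\text{ap}}^{1/2}(\mathfrak{p},K,F_i)|F_i|^{1/2}\,h_K^{s_K-d/2}\mathfrak{p}^{-(l_K-1/2)}$ with exactly the advertised two-branch structure, since the dichotomy $\min\{\,\cdot\,,\mathfrak{p}^{d-1}\}$ in $\mathcal{C}_{\text{ap}}$ is inherited verbatim from the $\mathfrak{p}$-coverable vs.\ non-coverable alternative in $C_{\text{INV}}$.

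\textbf{Principal obstacle.} The routine part is the interior bound, which is essentially a citation of $hp$-approximation on shape-regular simplices composed with the extension operator $\mathfrak{E}$. The delicate step is the face estimate: one must make sure that the geometric factor produced by \eqref{eq:Fi_to_K} (which carries the $\sup$--$\min$ over $K_{F_i}$) and the one produced by the continuous trace on the covering simplex combine into a \emph{single} constant with the precise form of $\mathcal{C}_{\text{ap}}$, and that no stray factor of $h_K^{-1}$ or $\mathfrak{p}^{1/2}$ survives in the wrong place. Booking these half-powers correctly, while respecting the fact that the inverse trace constant is intrinsically anisotropy- and geometry-sensitive (through $F_i$ and through the star-shaped witness vertex $\mathrm{v}_{0,i}$), is where the bulk of the technical care is spent; everything else reduces to Assumption \ref{ass:max_card} and the universal extension bound $\|\mathfrak{E}v\|_{H^{l_K}(\mathbb{R}^d)} \le C_\mathfrak{E}\|v\|_{H^{l_K}(\Omega^\sharp)}$.
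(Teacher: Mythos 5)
First, a point of reference: the paper itself gives no proof of this lemma; it is recalled verbatim from \cite{Georgoulis17,Dong19}, so your sketch can only be measured against the argument in those sources. Your treatment of the interior bound \eqref{ineq:proj_K_to_H_elk} is correct and is exactly the standard construction: set $\pi_\mathfrak{p} v=\widehat{\pi}_\mathfrak{p}(\mathfrak{E}v)|_K$ with $\widehat{\pi}_\mathfrak{p}$ the Babu\v{s}ka--Suri projector on the shape-regular covering simplex, then restrict to $K\subset\mathcal{K}$ and use $h_{\mathcal{K}}\le C_{\text{diam}}h_K$.

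The face bound is where there is a genuine gap. Your decomposition $v-\pi_\mathfrak{p} v=(v-\pi_{\mathfrak{p}-1}v)+(\pi_{\mathfrak{p}-1}v-\pi_\mathfrak{p} v)$, with the polynomial inverse trace estimate \eqref{eq:Fi_to_K} applied to the second summand, cannot reproduce \eqref{ineq:proj_Fi_to_H_elk}: the prefactor in \eqref{eq:Fi_to_K} is $\bigl(C_{\text{INV}}(\mathfrak{p}+1)(\mathfrak{p}+d)|F_i|/|K|\bigr)^{1/2}\sim\mathfrak{p}\cdot(\cdot)^{1/2}$, i.e.\ it carries a full power of $\mathfrak{p}$, while the $L^2(K)$ norm of $\pi_{\mathfrak{p}-1}v-\pi_\mathfrak{p} v$ is only $O(h_K^{s_K}\mathfrak{p}^{-l_K})$; the product is $O(\mathfrak{p}^{1-l_K})$ where the target is $O(\mathfrak{p}^{1/2-l_K})$, so this summand is short by $\mathfrak{p}^{1/2}$ and dominates the (correctly balanced) first summand. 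Relatedly, the claim that the dichotomy in $\mathcal{C}_{\text{ap}}$ is ``inherited verbatim'' from $C_{\text{INV}}$ is false: the coverable branch of $C_{\text{INV}}$ scales like $\mathfrak{p}^{2(d-1)}$, whereas $\mathcal{C}_{\text{ap}}$ has $\mathfrak{p}^{d-1}$ --- a genuinely smaller constant that the inverse estimate cannot deliver. The proof in \cite{Georgoulis17} instead applies the continuous $\varepsilon$-weighted trace inequality on the star-shaped sub-element $K_{F_i}$ (the second trace lemma quoted in the paper, with $\|v\|^2_{F_i}\le\frac{d+\varepsilon}{\min(\mathbf{d}_i\cdot\mathbf{n}_{F_i})}\|v\|^2_{K_{F_i}}+\frac{\max|\mathbf{d}_i|^2}{\varepsilon\min(\mathbf{d}_i\cdot\mathbf{n}_{F_i})}\|\nabla v\|^2_{K_{F_i}}$) \emph{directly to the whole error} $v-\pi_\mathfrak{p} v\in H^1$, and then chooses $\varepsilon\sim\mathfrak{p}$ to balance the $q=0$ and $q=1$ interior bounds; this is what produces the extra $\mathfrak{p}^{1/2}$ and the $\sup$--$\min$ geometric factor in the first branch of $\mathcal{C}_{\text{ap}}$, while the $\mathfrak{p}^{d-1}$ branch comes from a separate argument over the overlapping simplices in the $\mathfrak{p}$-coverable case, not from \eqref{eq:Fi_to_K}. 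Your polynomial-splitting device is the right tool for bounding \emph{normal derivatives} on faces (where the loss is accepted elsewhere in this paper), but for the $L^2(F_i)$ bound stated here it is both unnecessary and lossy.
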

%\begin{rem} 
We note the correspondence between $\mathcal{C}_\text{INV} (\mathfrak{p}, K, F_i)$ from Lemma
\ref{lem:Fi_to_K}, %4.22
 and $\mathcal{C}_\text{ap} (\mathfrak{p}, K, F_i)$ while $h^d_K \sim |K|$ is the typical case. The key attribute of both
expressions is that they remain bounded for degenerating $|F_i|$, allowing for the
estimates (\ref{eq:Fi_to_K}) %(4.16) 
and (\ref{ineq:proj_Fi_to_H_elk}) %(4.27) 
to remain finite as $|F_i| \to 0$. 
%\end{rem}
%
%\begin{rem} 
% If the constant $\mathcal{C}_\text{ap}(\mathfrak{p}, K, F_i )$ in (\ref{ineq:proj_Fi_to_H_elk}) %(4.27) 
% is taken with the first term,
% then the approximation result (\ref{ineq:proj_Fi_to_H_elk}) %(4.27)
% holds with $l_K > 1/2$.
%\end{rem}
\begin{cor}\label{discrete_error_revised}
The approximation errors of the extended interpolation operators {$\bm{\pi}_\mathfrak{p}$ and $\pi_\mathfrak{p}$} \itshape for $(\mathbf{u}, p) \in \left[H^{\mathfrak{p}+1}(\mathrm{\Omega^\sharp})\right]^d \times H^\mathfrak{p}(\mathrm{\Omega^\sharp})$ satisfy
\begin{eqnarray}
\vertiii{\mathbf{u}-\bm{\pi}_\mathfrak{p} \mathbf{u}} 
&\leq &C \sum_{K\in\mathcal{T}^\sharp}\frac{h_K^{\mathfrak{p}}}{\mathfrak{p}^{\mathfrak{p}-\frac{1}{2}}}\left|\left|\mathbf{u}\right|\right|_{\mathfrak{p}+1,\mathrm{K%\Omega^\sharp
}},
%
%\redhot{\leq C \frac{h^{\mathfrak{p}}}{\mathfrak{p}^{\mathfrak{p}-\frac{1}{2}}}\left|\left|\mathbf{u}\right|\right|_{\mathfrak{p}+1,\mathrm{\Omega^\sharp}},} 
\label{approx_er1_revised} 
\\
\vertiii{(\mathbf{u}-\bm{\pi}_\mathfrak{p} \mathbf{u}, p-{\pi_\mathfrak{p} p})}
&\leq &C \sum_{K\in \mathcal{T}^\sharp}\frac{h_K^{\mathfrak{p}}}{\mathfrak{p}^{\mathfrak{p}-\frac{1}{2}}} \Big(\left|\left|\mathbf{u}\right|\right|_{\mathfrak{p}+1, \mathrm{K%\Omega^\sharp
}} + 
\frac{1}{\mathfrak{p}^\frac{1}{2}}
\left|\left|p\right|\right|_{\mathfrak{p}, \mathrm{K%\Omega^\sharp
}}\Big).
%
%\redhot{\leq C \frac{h^{\mathfrak{p}}}{\mathfrak{p}^{\mathfrak{p}-\frac{1}{2}}} \Big(\left|\left|\mathbf{u}\right|\right|_{\mathfrak{p}+1, \mathrm{\Omega^\sharp}} + 
%\frac{1}{\mathfrak{p}^\frac{1}{2}}
%\left|\left|p\right|\right|_{\mathfrak{p}, \mathrm{\Omega^\sharp}}\Big).} 
\label{approx_er2_revised} 
\end{eqnarray}
\end{cor}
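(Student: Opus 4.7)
The plan is to establish each of the two claimed bounds by splitting the triple norm into its constituent summands, applying Lemma~\ref{lem:hp_projection_error_bounds} on each element $K\in\mathcal{T}^\sharp$ together with its covering simplex $\mathcal{K}\in\mathcal{T}^{cov}$, and finally summing with the help of Assumption~\ref{ass:max_card}. Throughout I will use the extension stability $\|\mathfrak{E}\mathbf{u}\|_{H^{\mathfrak{p}+1}(\mathcal{K})}\lesssim \|\mathbf{u}\|_{\mathfrak{p}+1,K}$ (and analogously for $p$) so that the right-hand sides land on $\Omega^\sharp$-norms.

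First I would dispatch the volume contributions. For the $\|\nabla(\mathbf{u}-\bm{\pi}_\mathfrak{p}\mathbf{u})\|_{\Omega^\sharp}$ piece apply \eqref{ineq:proj_K_to_H_elk} with $q=1$, $l_K=\mathfrak{p}+1$ to each $K$, yielding $h_K^\mathfrak{p}/\mathfrak{p}^\mathfrak{p}\,\|\mathbf{u}\|_{\mathfrak{p}+1,K}$, which is majorised by $h_K^\mathfrak{p}/\mathfrak{p}^{\mathfrak{p}-1/2}\,\|\mathbf{u}\|_{\mathfrak{p}+1,K}$. For $\|p-\pi_\mathfrak{p} p\|_{\Omega^\sharp}$ take $q=0$, $l_K=\mathfrak{p}$ to get $h_K^\mathfrak{p}/\mathfrak{p}^\mathfrak{p}\,\|p\|_{\mathfrak{p},K}\lesssim h_K^\mathfrak{p}/\mathfrak{p}^{\mathfrak{p}-1/2}\cdot\mathfrak{p}^{-1/2}\|p\|_{\mathfrak{p},K}$, which produces the extra $\mathfrak{p}^{-1/2}$ factor that appears in front of $\|p\|_{\mathfrak{p},K}$ in \eqref{approx_er2_revised}.

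Second I would treat the face and boundary terms weighted by $\sigma^{1/2}$ and by $\mathfrak{p}^{-1}h_F^{1/2}$. For $\|\sigma^{1/2}(\mathbf{u}-\bm{\pi}_\mathfrak{p}\mathbf{u})\|_\Gamma$ and the interior jump terms, insert the standard scaling $\sigma\sim\mathfrak{p}^2/h_F$ into \eqref{ineq:proj_Fi_to_H_elk} with $l_K=\mathfrak{p}+1$: since $|F_i|\lesssim h_K^{d-1}$ and $\mathcal{C}_{\mathrm{ap}}(\mathfrak{p},K,F_i)$ is bounded uniformly in the (possibly degenerate) curved face geometry by Assumption~\ref{ass:basic_geometry} combined with \eqref{ineq:global}, the contribution is
\begin{equation*}
\Big(\tfrac{\mathfrak{p}^2}{h_F}\Big)^{1/2}\cdot \mathcal{C}_{\mathrm{ap}}^{1/2}|F_i|^{1/2}\,\frac{h_K^{\mathfrak{p}+1-d/2}}{\mathfrak{p}^{\mathfrak{p}+1/2}}\,\|\mathfrak{E}\mathbf{u}\|_{H^{\mathfrak{p}+1}(\mathcal{K})}\;\lesssim\;\frac{h_K^\mathfrak{p}}{\mathfrak{p}^{\mathfrak{p}-1/2}}\,\|\mathbf{u}\|_{\mathfrak{p}+1,K}.
\end{equation*}
The pressure face terms $\|\mathfrak{p}^{-1}h_F^{1/2}(p-\pi_\mathfrak{p} p)\|_F$ are handled by the same device with $l_K=\mathfrak{p}$, producing $h_K^\mathfrak{p}/\mathfrak{p}^{\mathfrak{p}+1/2}\,\|p\|_{\mathfrak{p},K}$, again consistent with the $\mathfrak{p}^{-1/2}$ improvement claimed in \eqref{approx_er2_revised}.

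Third I would dispose of the normal-derivative summands $\sum_{K\in\mathcal{T}^{cov}}\|\mathfrak{p}^{-1}h_K^{1/2}\nabla(\mathbf{u}-\bm{\pi}_\mathfrak{p}\mathbf{u})\cdot\mathbf{n}_T\|_{\partial K}^2$ and the analogous one appearing in $\vertiii{p}$. For these, since $\mathbf{u}-\bm{\pi}_\mathfrak{p}\mathbf{u}$ is not polynomial, apply the classical continuous trace inequality $\|\nabla w\|_{\partial K}^2\lesssim h_K^{-1}\|\nabla w\|_K^2+h_K\|\nabla^2 w\|_K^2$ and then \eqref{ineq:proj_K_to_H_elk} with $q=1$ and $q=2$. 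Both sides give $h_K^{\mathfrak{p}-1/2}/\mathfrak{p}^{\mathfrak{p}-1}\,\|\mathbf{u}\|_{\mathfrak{p}+1,K}$ up to constants; the prefactor $\mathfrak{p}^{-1}h_K^{1/2}$ then yields $h_K^\mathfrak{p}/\mathfrak{p}^\mathfrak{p}$, comfortably bounded by $h_K^\mathfrak{p}/\mathfrak{p}^{\mathfrak{p}-1/2}$. The pressure counterpart is identical but with $l_K=\mathfrak{p}$ and $q=0,1$, again giving the $\mathfrak{p}^{-1/2}$ improvement. Finally, combine the contributions and invoke the bounded-overlap property of Assumption~\ref{ass:max_card} to convert sums over $\mathcal{T}^{cov}$ into sums over $\mathcal{T}^\sharp$.

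The main obstacle I anticipate is ensuring that the geometric constant $\mathcal{C}_{\mathrm{ap}}(\mathfrak{p},K,F_i)$ from \eqref{ineq:proj_Fi_to_H_elk} does not degrade for the arbitrarily shaped curved boundary facets permitted here, since it is precisely the behaviour of $\mathcal{C}_{\mathrm{ap}}$ on such faces that distinguishes this setting from the standard polytopic one; however, Lemma~\ref{lem:hp_projection_error_bounds} has already absorbed the relevant star-shapedness information from Assumption~\ref{ass:basic_geometry} together with \eqref{ineq:global}, so $\mathcal{C}_{\mathrm{ap}}$ remains uniformly controlled as $|F_i|\to 0$ and the face estimates survive intact.
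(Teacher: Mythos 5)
Your proposal is correct and follows essentially the same route as the paper: decompose the triple norm termwise, apply the $hp$-approximation bounds \eqref{ineq:proj_K_to_H_elk} and \eqref{ineq:proj_Fi_to_H_elk} with the appropriate choices of $q$ and $l_K$ on each $K\subset\mathcal{K}$, use the stability of the extension operator, and sum via Assumption \ref{ass:max_card}. The only cosmetic difference is that for the normal-derivative summands you invoke the continuous trace inequality with $q=1,2$, whereas the paper uses an inverse-type bound $\|\nabla \mathbf{e}_\pi\cdot\mathbf{n}_F\|_F\le C\mathfrak{p}h_K^{-1/2}\|\nabla\mathbf{e}_\pi\|_K$ for the velocity and the multiplicative trace inequality for the pressure; both yield the same $h_K^{\mathfrak{p}}/\mathfrak{p}^{\mathfrak{p}}$ rate, so the final estimates coincide.
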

%
%\EK
{
\begin{proof}
It is convenient to introduce %illuminating, to induct 
the auxiliary norm 
\begin{multline*}
\vertiii{\mathbf{v}}^2_{h} = 
\sum_{\redhot{\mathcal{K}} \in {\mathcal{T}^{cov}%\sharp
}}||\nabla v  ||_\redhot{\mathcal{K}}^2
%\left\|\nabla \mathbf{v}\right\|^2_{\Omega_{\mathcal{T}^{cov}}}
 +% \sum_{\redhot{F \in {\mathcal{T}^\sharp}\cap %\Gamma
% }\EK{\mathsf{\Gamma}}
% }
 \|{\green{%\mathfrak{p}%^{1/2}
\sigma^{1/2}}}%h^{-1/2} 
\mathbf{v}\|^2_{\EK{\Gamma}}%{F} 
+ {\red{\sum_{F \in {\mathcal{T}^\sharp}\cap 
%\Gamma 
\EK{\mathsf{\Gamma}}}\|{\magenta{\mathfrak{p}^{-1}}}h_F^{1/2}\mathbf{n}_{\Gamma}\cdot \nabla \mathbf{v}\|^2_{F} }}
+  \sum_{F\in \mathcal{F}^{\sharp}_{int}} \|{\green{\sigma^{1/2}%\mathfrak{p}^{1/2}
}}
%h^{-1/2}
[\![\mathbf{v} ]\!]\|^2_{\EK{F}%\cap \Omega^\sharp
}
 \\
+
\EK{\frac{1}{2}} {\red{\sum_{K \in \EK{\mathcal{T}^{cov}}% \mathcal{T}^{\sharp}
}\|{\magenta{\mathfrak{p}^{-1}}}h_K^{1/2}\nabla \mathbf{v} |_{K}\cdot \mathbf{n}_{\partial K}\|_{\partial K}^2}},
%& \textcolor{red}{ |||p|||^2 = \left\| p\right\|^2_{\Omega}\,\,\, + \left\| h^{1/2}p\right\|^2_{\Gamma} +\sum_{F\in \mathcal{F}_{h}^{int}} \left\|h^{1/2}[\![p ]\!]\right\|^2_{F\cap \Omega^\sharp}}, \nonumber \\
%& \textcolor{red}{ |||(\mathbf{v},p)|||^2  =  |||\mathbf{v}|||^2+|||p|||^2.} \nonumber 
\end{multline*}
%\EK{
see also \cite[p.60]{Georgoulis17}.
\EK{This norm} clearly \EK{bounds} %dominates
 $\vertiii{\mathbf{v}}$, in the sense that
$\vertiii{\mathbf{v}-\bm{\pi}_{\mathfrak{p}}\mathbf{v}}\leq
\vertiii{\mathfrak{E}^{{\magenta{\mathfrak{p}}}+1}\mathbf{v}-\bm{\pi}_\mathfrak{p}\mathbf{v}}_h.$ %{\mathcal{T}^{cov}}$.  
 Hence, we may prove the estimate for $\vertiii{\cdot}_h$ instead of $\vertiii{\cdot}$. 
%Recalling the local approximation properties of $\mathbf{\Pi}_h$ in Lemma \ref{approx}, note the following global error estimates for $(s,r)=(2,0)$:
%\begin{align}
%\left\|v- \mathbf{\Pi}_h v \right\|_{\Omega}&\lesssim h^{2} |v|_{2,\Omega}, \label{interpolation_est1a} \\ 
%\sum_{F \in \mathcal{F}_{h}^{int}}\left\|v- \mathbf{\Pi}_h v \right\|_{F}&\lesssim h^{3/2}_{F} |v|_{2,\Omega}, \label{interpolation_est2a} \\
%\left\|v- \mathbf{\Pi}_h v \right\|_{\Gamma}&\lesssim h^{3/2}_{F} |v|_{2,\Omega}. \label{interpolation_est3a}
%\end{align}
Setting $\mathbf{e}_{\pi}=\mathfrak{E}^{{\magenta{\mathfrak{p}}}+1}\mathbf{u}-\bm{\pi}_{\mathfrak{p}}\mathbf{u}$, we take by definition
\begin{align*}
\vertiii{\mathbf{e}_{\pi}}^2_h = 
\sum_{\EK{\mathcal{K}} \in {\mathcal{T}^{cov}%\sharp
}}||\nabla \mathbf{e}_{\pi} ||_\EK{\mathcal{K}}^2
%\left\|\nabla \mathbf{e}_{\pi}\right\|^2_{\Omega_{\mathcal{T}^{cov}}}
%
+ %\sum_{F \in {\mathcal{T}^\sharp}\cap 
%\Gamma 
%\EK{\mathsf{\Gamma}}
%}
\|{\green{{%\mathfrak{p}%^{1/2}
\sigma^{1/2}}}}
%h^{-1/2} 
\mathbf{e}_{\pi}\|^2_{\EK{\Gamma}}
+ {\red{\sum_{F \in {\mathcal{T}^\sharp}\cap 
%\Gamma
\EK{\mathsf{\Gamma}}
}\|{\magenta{{\mathfrak{p}}^{-1}}}h_F^{1/2}\nabla \mathbf{e}_{\pi}\cdot \mathbf{n}_{F}\|^2_{F}}}
+ \sum_{F\in \mathcal{F}^{\sharp}_{int}}\|{\green{{\sigma^{1/2}%\mathfrak{p}^{1/2}
}}
}%h^{-1/2}
[\![\mathbf{e}_{\pi} ]\!]\|^2_{\EK{F}%\cap\Omega^\sharp
}
\\
 \qquad\qquad\qquad + {\red{\sum_{F\in \mathcal{F}^{\sharp}_{int}}\|{\magenta{{\mathfrak{p}^{-1}}}}h_F^{1/2}\llrrparen{\nabla\mathbf{e}_{\pi}}\cdot \mathbf{n}_{F}\|^2_{\EK{F}
%\cap\Omega^\sharp
}}}.
%\\
%= I_1 +I_2+I_3+I_4+I_5.
\end{align*}
All the above terms %$I_1%\le C \mathfrak{p}^4/h^2_ K ||v||^2 _K
%$ and $I_5$ 
may be estimated, using the local approximation properties (\ref{ineq:proj_K_to_H_elk})--(\ref{ineq:proj_Fi_to_H_elk}), the aforementioned inverse estimates %(\ref{deriv_estimate3}) 
and the stability  of the extension operator $\mathfrak{E}^{k+1}$. For instance, 
\begin{align*}
%\left\|\nabla \mathbf{e}_{\pi}\right\|_{\Omega{^{\sharp}}}  = 
%
\sum_{\mathcal{K}\in{T}^{cov%\sharp
}} \|\nabla \mathbf{e}_{\pi}\|_{\EK{\mathcal{K}}} %\stackrel{(\ref{deriv_estimate3})} 
\le
\sum_{\mathcal{K}\in{T}^{cov%\sharp
}} \left\| \mathbf{e}_{\pi}\right\|_{H^1{({\EK{\mathcal{K}}})}} \stackrel{(\ref{ineq:proj_K_to_H_elk%approx_er1_revised
})} 
%\le
%\\
%%
%{\TODO{\le C \sum_{T\in\mathcal{T}^{cov}} C_1 \frac{h_K^{s_K-q}}{\mathfrak{p}^{l_K-q}}||\mathfrak{E}u||_{H^{l_K} (\mathcal{K})}
%\le C \sum_{T\in\mathcal{T}^{cov}} C_1 \frac{h_K^{s_K-1}}{\mathfrak{p}^{l_K-1}}||\mathfrak{E}u||_{H^{l_K} (\mathcal{K})}
%}}
%
%\\
\le C \sum_{\mathcal{K}\in\mathcal{T}^{cov}} \frac{h_{\EK{\mathcal{K}}}^{\mathfrak{p}}}{\mathfrak{p}^{\mathfrak{p}}}||\mathfrak{E}u||_{H^{\mathfrak{p}+1} (\mathcal{K})}
%
%\redhot{\le C \frac{h^{\mathfrak{p}}}{\mathfrak{p}^{\mathfrak{p}}}||\mathfrak{E}u||_{H^{\mathfrak{p}+1} (\Omega{^{cov}})}}
,
\end{align*}
after choosing $q = 1$, $l_k = \mathfrak{p}+1$ and $s^k=\min(\mathfrak{p}+1,l_k)$ in (\ref{ineq:proj_K_to_H_elk}). Similarly, we derive 
$$
\sum_{F \in {\mathcal{T}^\sharp}\cap 
%\Gamma
\EK{\mathsf{\Gamma}}
}\|{\green{\mathfrak{p}}}%^{1/2}
h_F^{-1/2} \mathbf{e}_{\pi}\|_{F}
\stackrel{\redhot{(\ref{ineq:proj_Fi_to_H_elk%approx_er1_revised
})}} \le 
\EK{C\sum_{\mathcal{K}\in\mathcal{T}^{cov}}}
\mathfrak{p}h_{\EK{\mathcal{K}}}^{-1/2}\frac{h_{\EK{\mathcal{K}}}^{\mathfrak{p}+1/2}}{\mathfrak{p}^{\mathfrak{p}+1/2}} ||\mathfrak{E}u||_{H^{\mathfrak{p}+1} (\EK{\mathcal{K}}%\Omega{^{cov}}
)}
=
\EK{C\sum_{\mathcal{K}\in\mathcal{T}^{cov}}}
\frac{h_{\EK{\mathcal{K}}}^{\mathfrak{p}}}{\mathfrak{p}^{\mathfrak{p}-\frac{1}{2}}} ||\mathfrak{E}u||_{H^{\mathfrak{p}+1} (\EK{\mathcal{K}}%\Omega{^{cov}}
)},
$$
e.g. for $d=2$, $l_k=\mathfrak{p}+1$.
{{Let $F\subset{\partial{K}}$. As we have seen in the variational formulation, the norm $||\nabla e_p\cdot\mathbf{n}_F||_F$ can be efficiently approximated by 
$||\nabla e_p\cdot \mathbf{n}_F||_{F}\le C\frac{\mathfrak{p}}{h^{1/2}_K}||\nabla e_p||_{L^2(K)}$.
We note that inserting the $\Pi_{L^2}$ projection we are losing $\mathfrak{p}^{1/2}$ accuracy, \cite{Georgoulis2010OnTS}, although this is consistent with the half power we lose from the penalization of the method and we can prove optimal a--priori error bounds. Finally, we take
{\red
\begin{eqnarray*}
\sum_{F \in {\mathcal{T}^\sharp}\cap \EK{\mathsf{\Gamma}}
%\EK{\mathsf{\Gamma}}
}
||{\mathfrak{p}^{-1}}h_{\EK{{F}}}^{1/2}\cdot \nabla e_p \cdot {\bf{n}}_F||_{F}
\le C \sum_{\mathcal{K}\in\mathcal{T}^{cov}} {\mathfrak{p}^{-1}}h^{1/2}_{\EK{\mathcal{K}}}\frac{\mathfrak{p}}{h_{\EK{\mathcal{K}}}^{1/2}}||\nabla e_p||_{\mathcal{K}%L^2(K)
}
\le C \sum_{\mathcal{K}\in\mathcal{T}^{cov}}\frac{h_{\EK{\mathcal{K}}}^{\mathfrak{p}}}{\mathfrak{p}^{\mathfrak{p}}}||\mathfrak{E}u||_{H^{\mathfrak{p}+1} (\mathcal{K})}
%\\
%\le C  \frac{h^{\mathfrak{p}}}{\mathfrak{p}^{\mathfrak{p}}}||\mathfrak{E}u||_{H^{\mathfrak{p}+1} (\Omega^{{cov}})}
.
\end{eqnarray*}
}}
}
Proceeding in a similar fashion, we have 
\begin{eqnarray*}
{\green{\sum_{F\in \mathcal{F}^{\sharp}_{int}} \|{\green{\sigma%\mathfrak{p}
}}%^{1/2}
%h^{-1/2}
[\![\mathbf{e}_{\pi} ]\!]\|_{\EK{F}%\cap \Omega^\sharp
}=}}\sum_{F\in \mathcal{F}^{\sharp}_{int}} \|{\green{\mathfrak{p}}}%^{1/2}
h_{\EK{{F}}}^{-1/2}[\![\mathbf{e}_{\pi} ]\!]\|_{\EK{F}%\cap \Omega^\sharp
}
%\redhot{\le
%\frac{h^{\mathfrak{p}}}{\mathfrak{p}^{\mathfrak{p}-\frac{1}{2}}} ||\mathfrak{E}u||_{H^{\mathfrak{p}+1} (\Omega{^{cov}})}, 
%%$$
%%and
%%$$
%%\quad 
%%\\
%}
{\EK{\le\EK{C\sum_{\mathcal{K}\in\mathcal{T}^{cov}}}
\frac{h_{\EK{\mathcal{K}}}^{\mathfrak{p}}}{\mathfrak{p}^{\mathfrak{p}-\frac{1}{2}}} ||\mathfrak{E}u||_{H^{\mathfrak{p}+1} (\EK{\mathcal{K}}%\Omega{^{cov}}
)}
}},
\\
{\red\sum_{F\in \mathcal{F}^{\sharp}_{int}}\|\mathfrak{p}^{-1}h^{1/2}_{\EK{F}}\llrrparen{\nabla\mathbf{e}_{\pi}}\cdot \mathbf{n}_{F}\|^2_{\EK{F}%\cap \Omega
}
\EK{\le C\EK{\sum_{\mathcal{K}\in\mathcal{T}^{cov}}}
\frac{h_{\EK{\mathcal{K}}}^{\mathfrak{p}}}{\mathfrak{p}^{\mathfrak{p}}} ||\mathfrak{E}u||_{H^{\mathfrak{p}+1} (\EK{\mathcal{K}}%\Omega{^{cov}}
)}}
%\le 
%C  \frac{h^{\mathfrak{p}}}{\mathfrak{p}^{\mathfrak{p}}}||\mathfrak{E}u||_{H^{\mathfrak{p}+1} (\Omega{^{cov}})}
,
}
\end{eqnarray*}
%
%Proceeding in a similar fashion, $I_2$ and $I_3$ can be treated by applying the global error estimate $\left\|\mathbf{v}- \mathbf{\Pi}_h \mathbf{v} \right\|_{\Gamma}\lesssim h^{k+1/2} |\mathbf{v}|_{k+1,\Omega}$ and (\ref{deriv_estimate2}), while estimate (\ref{interpolation_est2_revised}) combined with (\ref{deriv_estimate1}) gives the desired bounds for $I_4$ 
and  the proof of (\ref{approx_er2_revised}) is complete. 
The proof of the estimate (\ref{approx_er2_revised}) for the approximation error in the product space is similar, considering the  auxiliary pressure norm
\begin{equation*} 
\vertiii{p}^2_h =    \sum_{\EK{\mathcal{K}} \in {\mathcal{T}^{\EK{cov}}%\sharp
}}\| p\|^2_{\EK{\mathcal{K}}}\,\,\, 
+ \sum_{F \in {\mathcal{T}^\sharp}\cap \EK{\mathsf{\Gamma}}
%\EK{\mathsf{\Gamma}}
}\| {\green{\mathfrak{p}^{-1}}}%^{-1/2}
h_{\EK{F}}^{1/2}p\|^2_{F} +\sum_{F\in \mathcal{F}^{\sharp}_{int}} \|{\green{\mathfrak{p}^{-1}}}%^{-1/2}
h_{\EK{F}}^{1/2}[\![p ]\!]\|^2_{\EK{F}%\cap \Omega^\sharp
}
\end{equation*}
and proving the assertion for ${e}_{\pi} = \vertiii{\mathfrak{E}p - {\pi}_\mathfrak{p} p}_h$. {\blue{In particular we can employ a multiplicative trace inequality:
$$||q||^{2}_{\partial K} \le C (||q||_K ||\nabla q||_K + h_K^{-1} ||q||^2_K) ,\quad q \in H^1(K),
$$ \cite[p2140]{Houston2002}, \cite[p1571]{T02},
and we conclude to
\begin{eqnarray*}
\sum_{F\in  {\mathcal{T}^\sharp}} \|{\green{\mathfrak{p}^{-1}}} h_{\EK{F}}^{1/2}{e}_{\pi}%[\![p ]\!]
\|^2
_{\EK{F}%\cap \Omega^\sharp
}
\le
C \sum_{K\in  {\mathcal{T}^\sharp}} \big(||\mathfrak{p}^{-1} h_K^{1/2}{e}_{\pi}||_K ||\mathfrak{p}^{-1} h_K^{1/2}\nabla {e}_{\pi}||_K + h_K^{-1} ||\mathfrak{p}^{-1} h_K^{1/2}{e}_{\pi}||^2_K\big)
\\
 \stackrel{q=0, \ s_K =l_K=\mathfrak{p}%, \newline s_K = \mathfrak{p}%\min\{\mathfrak{p}+1, l_K\}
 }\le
C \sum_{\substack{\mathcal{K}\in  {\mathcal{T}^{cov}}\\
                  {K}\in{\mathcal{T}^{\sharp}}}}%{K\in  {\mathcal{T}^\sharp}, \mathcal{K}\in  {\mathcal{T}^{cov}}} 
\big(
%||
%
\frac{h_K^{s_K-0}}{\mathfrak{p}^{\mathfrak{p}-0}}||
\mathfrak{p}^{-1} h_K^{1/2}
\mathfrak{E}{p}
||_{H^{ \mathfrak{p}} (\mathcal{K})}
%
%{e}_{\pi}
%
%||_K
 ||
\mathfrak{p}^{-1} h_K^{1/2}
\nabla 
{e}_{\pi}
||_K + h_K^{-1} ||
\mathfrak{p}^{-1} h_K^{1/2}
{e}_{\pi}
||^2_K\big)
\\
\le
C \sum_{\substack{\mathcal{K}\in  {\mathcal{T}^{cov}}\\
                  {K}\in{\mathcal{T}^{\sharp}}}} \big(
%||
%
\frac{h_K^{\mathfrak{p}}}{\mathfrak{p}^{\mathfrak{p}}}||\mathfrak{p}^{-1}\redhot{h_K}^{1/2}\mathfrak{E}{p}||_{H^{ \mathfrak{p}} (\mathcal{K})}
%
%{e}_{\pi}
%
%||_K
\frac{\mathfrak{p}^2}{h_K} ||
\mathfrak{p}^{-1} h_K^{1/2}
%\nabla 
{e}_{\pi}
||_K + h_K^{-1} ||
\mathfrak{p}^{-1} h_K^{1/2}
{e}_{\pi}
||^2_K\big)
\\
\le
C \sum_{\substack{\mathcal{K}\in  {\mathcal{T}^{cov}}\\
                  {K}\in{\mathcal{T}^{\sharp}}}}\big(
%||
%
\frac{h_K^{\mathfrak{p}}}{\mathfrak{p}^{\mathfrak{p}}}||\mathfrak{p}^{-1} h_K^{1/2}\mathfrak{E}{p}||_{H^{ \mathfrak{p}} (\mathcal{K})}
%
%{e}_{\pi}
%
%||_K
\frac{\mathfrak{p}}{h_K^{1/2}} || 
{e}_{\pi}
||_K +  
\mathfrak{p}^{-2}||
{e}_{\pi}
||^2_K\big),
%\le
%C \sum_{\mathcal{K}\in  {\mathcal{T}^{cov}}}
%%||
%%
%%{e}_{\pi}
%%
%%||_K
%%\frac{h_K^{2\mathfrak{p}}}{\mathfrak{p}^{2\mathfrak{p}}}||\mathfrak{E}{p}||^2_{H^{ \mathfrak{p}} (\mathcal{K})}
%\frac{h_K^{2\mathfrak{p}}}{\mathfrak{p}^{2\mathfrak{p}}}||\mathfrak{E}{p}||^2_{H^{ \mathfrak{p}} (\mathcal{K})},
\end{eqnarray*}
and finally with standard algebra 
\begin{eqnarray*}
\sum_{F\in  {\mathcal{T}^\sharp}} \|{\green{\mathfrak{p}^{-1}}} h_{\EK{F}}^{1/2}{e}_{\pi}%[\![p ]\!]
\|^2
_{\EK{F}%\cap \Omega^\sharp
}
\le
C \sum_{
\substack{\mathcal{K}\in  {\mathcal{T}^{cov}}\\
                  {K}\in{\mathcal{T}^{\sharp}}}
}
 \big(
%||
%
\frac{h_K^{\mathfrak{p}}}{\mathfrak{p}^{\mathfrak{p}}}||\mathfrak{E}{p}||_{H^{ \mathfrak{p}} (\mathcal{K})}
%
%{e}_{\pi}
%
%||_K
|| 
{e}_{\pi}
||_K +  
\mathfrak{p}^{-2}||
{e}_{\pi}
||^2_K\big)
\\
\le
C \sum_{\mathcal{K}\in  {\mathcal{T}^{cov}}} \big(
%||
%
\frac{h_K^{\mathfrak{p}}}{\mathfrak{p}^{\mathfrak{p}}}||\mathfrak{E}{p}||_{H^{ \mathfrak{p}} (\mathcal{K})}
%
%{e}_{\pi}
%
%||_K
%\frac{h_K^{2\mathfrak{p}}}{\mathfrak{p}^{2\mathfrak{p}}}||\mathfrak{E}{p}||^2_{H^{ \mathfrak{p}} (\mathcal{K})}
\frac{h_K^{\mathfrak{p}}}{\mathfrak{p}^{\mathfrak{p}}}||\mathfrak{E}{p}||_{H^{ \mathfrak{p}} (\mathcal{K})}
 +  
\mathfrak{p}^{-2}\frac{h_K^{2\mathfrak{p}}}{\mathfrak{p}^{2\mathfrak{p}}}||\mathfrak{E}{p}||^2_{H^{ \mathfrak{p}} (\mathcal{K})}\big)
\\
\le
C \sum_{\substack{\mathcal{K}\in  {\mathcal{T}^{cov}} }}
\big(
%||
%
%{e}_{\pi}
%
%||_K
%\frac{h_K^{2\mathfrak{p}}}{\mathfrak{p}^{2\mathfrak{p}}}||\mathfrak{E}{p}||^2_{H^{ \mathfrak{p}} (\mathcal{K})}
\frac{h_K^{2\mathfrak{p}}}{\mathfrak{p}^{2\mathfrak{p}}}||\mathfrak{E}{p}||^2_{H^{ \mathfrak{p}} (\mathcal{K})}
 +  
\frac{h_K^{2\mathfrak{p}}}{\mathfrak{p}^{2(\mathfrak{p}+1)}}||\mathfrak{E}{p}||^2_{H^{ \mathfrak{p}} (\mathcal{K})}\big)
\\
\le
C \sum_{\mathcal{K}\in  {\mathcal{T}^{cov}}}
%||
%
%{e}_{\pi}
%
%||_K
%\frac{h_K^{2\mathfrak{p}}}{\mathfrak{p}^{2\mathfrak{p}}}||\mathfrak{E}{p}||^2_{H^{ \mathfrak{p}} (\mathcal{K})}
\frac{h_K^{2\mathfrak{p}}}{\mathfrak{p}^{2\mathfrak{p}}}||\mathfrak{E}{p}||^2_{H^{ \mathfrak{p}} (\mathcal{K})},
\end{eqnarray*}

which completes the proof.
}}
\end{proof}
}
We continue with the stability for the $b_h$ \EK{proof}.
\begin{lem}\label{stab_b}
There exists $C >0$,  such that for every $p_h \in Q_h^{\sharp}$ we have
\begin{equation}\label{b_stab}
C  \left\|p_h\right\|_{\Omega^\sharp}\leq  \sup_{\mathbf{w}_h \in V_h^{\sharp}\backslash\left\{0\right\}}\frac{b_h(\mathbf{w}_h,p_h)}{\vertiii{\mathbf{w}_h}_{V^{\EK{c}%\sharp
}}}% k_{\mathcal{T}}(p_h) 
+ % :=
%\left
\Big(\sum_{K\in \mathcal{T}^{%cov ???
{\magenta{\sharp}}
%???
}} \left\|\TODO{\mathfrak{p}^{-2%1
}}h_K\nabla p_h\right\|^2_{K}
%\right
\Big)^{1/2}+%\left
\Big(\sum_{F\in \mathcal{F}^\sharp _{int}}\left\|%h_F
\TODO{\mathfrak{p}^{-1%/2
}}
h_F^{1/2}
 [\![p_h ]\!]\right\|^2_{\EK{F}%\cap \Omega^\sharp
 }%\right
\Big)^{1/2}.%+  k_{\mathcal{T}}(p_h),
\end{equation}
%where $k_{\mathcal{T}}(p_h):=\left(\sum_{T\in \mathcal{T}^{cov}} \left\|h_T\nabla p_h\right\|^2_{T}\right)^{1/2}+\left(\sum_{F\in \mathcal{F}^\sharp _{int}}\left\|h_F^{1/2} [\![p_h ]\!]\right\|^2_{F\cap \Omega^\sharp}\right)^{1/2}$.
\end{lem}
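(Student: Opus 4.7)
The plan is to use a Fortin-Bogovskii argument adapted to the arbitrarily shaped boundary elements dG framework, paired with the $hp$-quasi-interpolant from Lemma \ref{lem:hp_projection_error_bounds} and the broken-form identity \eqref{b_alt}. Given $p_h\in Q_h^\sharp\subset L_0^2(\Omega^\sharp)$, the continuous inf-sup condition for the divergence on the Lipschitz domain $\Omega^\sharp$ supplies a Bogovskii-type lifting $v\in [H_0^1(\Omega^\sharp)]^d$ with $-\nabla\cdot v=p_h$ and $\|v\|_{H^1(\Omega^\sharp)}\le C_{\mathrm{div}}\|p_h\|_{\Omega^\sharp}$. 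I then take the discrete test function to be $\mathbf{w}_h:=\bm{\pi}_\mathfrak{p}(\mathfrak{E}v)\in V_h^\sharp$, the componentwise image of the lifted and extended $v$ under the interpolant of Lemma \ref{lem:hp_projection_error_bounds}. Using the stability of $\mathfrak{E}$, the approximation bounds \eqref{ineq:proj_K_to_H_elk}-\eqref{ineq:proj_Fi_to_H_elk} (with $l_K=1$), and the trace/inverse estimates \eqref{eq:Fi_to_K}, \eqref{estimate:nabla_v}-\eqref{estimate:normal_der_v}, each constituent of $\vertiii{\mathbf{w}_h}_{V^c}$ is controlled by $\|v\|_{H^1(\Omega^{cov})}$, yielding $\vertiii{\mathbf{w}_h}_{V^c}\le C_F\|p_h\|_{\Omega^\sharp}$.

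Next, I exploit the broken formulation \eqref{b_alt} by splitting $\mathbf{w}_h=v+(\mathbf{w}_h-v)$. Since $v\in H_0^1(\Omega^\sharp)^d$ one has $\llrrparen{v}=v$, $[\![v]\!]=0$ on each $F\in\mathcal{F}_{int}^\sharp$, and $v|_\Gamma=0$, so elementwise integration by parts applied to the $v$-contribution gives
\begin{equation*}
\int_{\Omega^\sharp} v\cdot\nabla_{\mathcal{T}^\sharp} p_h-\sum_{F\in\mathcal{F}_{int}^\sharp}\int_F \llrrparen{v}\cdot\mathbf{n}_F[\![p_h]\!]\,d\gamma \;=\; -\int_{\Omega^\sharp} p_h\,\nabla\cdot v\,d\bm{x} \;=\; \|p_h\|_{\Omega^\sharp}^2.
\end{equation*}
Consequently
\begin{equation*}
b_h(\mathbf{w}_h,p_h)=\|p_h\|_{\Omega^\sharp}^2+\int_{\Omega^\sharp}(\mathbf{w}_h-v)\cdot\nabla_{\mathcal{T}^\sharp} p_h\,d\bm{x}-\sum_{F\in\mathcal{F}_{int}^\sharp}\int_F\llrrparen{\mathbf{w}_h-v}\cdot\mathbf{n}_F[\![p_h]\!]\,d\gamma,
\end{equation*}
which isolates the residual produced by the non-conforming approximation of $v$.

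The residual is then bounded by weighted elementwise Cauchy-Schwarz chosen to reproduce the two extra terms on the right-hand side. For the volume residual, pair $\mathfrak{p}^2 h_K^{-1}\|\mathbf{w}_h-v\|_K$ against $\mathfrak{p}^{-2}h_K\|\nabla p_h\|_K$ and use \eqref{ineq:proj_K_to_H_elk} with $q=0$, $l_K=1$ to control the first factor by $\|\mathfrak{E}v\|_{H^1(\mathcal{K})}$ (up to constants from the inverse/approximation estimates, absorbed in $C$). For the face residual, pair $\mathfrak{p}\, h_F^{-1/2}\|\llrrparen{\mathbf{w}_h-v}\|_F$ against $\mathfrak{p}^{-1}h_F^{1/2}\|[\![p_h]\!]\|_F$ and use the trace-approximation bound \eqref{ineq:proj_Fi_to_H_elk} together with the continuity of $v$ (so that $\llrrparen{v}=v$). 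Summing and invoking the stability of $\mathfrak{E}$ gives
\begin{equation*}
\|p_h\|_{\Omega^\sharp}^2\;\le\; |b_h(\mathbf{w}_h,p_h)|+ C\,\|p_h\|_{\Omega^\sharp}\Bigl[\bigl(\textstyle\sum_{K}\|\mathfrak{p}^{-2}h_K\nabla p_h\|_K^2\bigr)^{1/2}+\bigl(\sum_{F}\|\mathfrak{p}^{-1}h_F^{1/2}[\![p_h]\!]\|_F^2\bigr)^{1/2}\Bigr].
\end{equation*}
Dividing by $\|p_h\|_{\Omega^\sharp}$ (the case $p_h\equiv 0$ being trivial) and employing $\vertiii{\mathbf{w}_h}_{V^c}\le C_F\|p_h\|_{\Omega^\sharp}$ to replace $|b_h(\mathbf{w}_h,p_h)|$ by the supremum on the right-hand side delivers \eqref{b_stab} with $C=C_F^{-1}$.

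The principal difficulty is orchestrating the weighted Cauchy-Schwarz pairings so that the powers of $h_K$ and $\mathfrak{p}$ on the interpolation error exactly match the weights appearing in the two extra terms of \eqref{b_stab}; this step is where the curved-boundary geometry and the possible degeneration of $|F_i|$ enter through the constants $\mathcal{C}_{\text{ap}}(\mathfrak{p},K,F_i)$ and $\mathcal{C}_{\text{INV}}(\mathfrak{p},K,F_i)$, and it must be combined with the uniform extension stability across the active mesh $\mathcal{T}^\sharp$ and its covering $\mathcal{T}^{cov}$ to keep constants independent of how the boundary cuts the covering simplices.
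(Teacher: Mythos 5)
Your argument is, in structure, the same as the paper's: a Bogovskii/divergence lifting $v\in[H_0^1(\Omega^\sharp)]^d$ of $p_h$, the $hp$-interpolant $\bm{\pi}_\mathfrak{p}$ of its extension as the discrete test function, the identity $\|p_h\|_{\Omega^\sharp}^2 = -\int_{\Omega^\sharp}v\cdot\nabla p_h + \sum_{F}\int_F\llrrparen{v}\cdot\mathbf{n}_F[\![p_h]\!]$ obtained from elementwise integration by parts and the conformity of $v$, followed by insertion of the interpolation error and weighted Cauchy--Schwarz on the volume and face residuals. Your splitting $\mathbf{w}_h=v+(\mathbf{w}_h-v)$ inside \eqref{b_alt} is algebraically identical to the paper's step \eqref{rearr} with $\mathbf{e}_h=\bm{\pi}_\mathfrak{p}\mathbf{v}_{p_h}-\mathbf{v}_{p_h}$, and the treatment of the $b_h(\bm{\pi}_\mathfrak{p}v,p_h)$ term via the supremum and the $H^1$-stability of the interpolant is the same.

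The one point that does not go through as written is the weight you choose for the volume residual. Since $v$ has only $H^1$ regularity, \eqref{ineq:proj_K_to_H_elk} with $q=0$, $l_K=1$ gives $\|\mathbf{w}_h-v\|_K\le C\,(h_K/\mathfrak{p})\,\|\mathfrak{E}v\|_{H^1(\mathcal{K})}$, so pairing against $\mathfrak{p}^{-2}h_K\|\nabla p_h\|_K$ leaves $\mathfrak{p}^2h_K^{-1}\|\mathbf{w}_h-v\|_K\le C\,\mathfrak{p}\,\|\mathfrak{E}v\|_{H^1(\mathcal{K})}$: the factor you propose to ``absorb in $C$'' is $\mathfrak{p}$ itself, which destroys the $\mathfrak{p}$-uniformity of the constant in \eqref{b_stab}. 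The paper avoids this by pairing $\mathfrak{p}h_K^{-1/2}\mathbf{e}_h$ against $\mathfrak{p}^{-1}h_K^{1/2}\nabla p_h$ in \eqref{i1}, for which the interpolation factor is bounded uniformly in $\mathfrak{p}$ --- though at the price that what is actually proved carries the weight $\mathfrak{p}^{-1}h_K^{1/2}$ rather than the $\mathfrak{p}^{-2}h_K$ appearing in the statement (and subsequently in Corollary \ref{cornew} and Theorem \ref{infs}); the two weights differ by the factor $\mathfrak{p}^{-1}h_K^{1/2}\le 1$, so the stated form is the stronger one. You should either adopt the paper's pairing and accept the $\mathfrak{p}^{-1}h_K^{1/2}$ weight, or justify separately why the extra factor of $\mathfrak{p}$ is admissible; as it stands this step is a genuine gap in the $\mathfrak{p}$-dependence, albeit one that mirrors a discrepancy already present between the paper's proof and its statement.
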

\begin{proof}
Considering a fixed $p_h \in Q_h^{\sharp}$, due to the surjectivity of the divergence operator there exists a %corresponding 
$\mathbf{v}_{p_h} \in \left[H_0^1(\Omega^\sharp)\right]^d$, such that 
\begin{equation}\label{est}
\nabla \cdot \mathbf{v}_{p_h} = p_h \text{\,\, (a)} \quad \text{and} \quad C_{\Omega^\sharp}\left\|\mathbf{v}_{p_h}\right\|_{1, \Omega^\sharp}\leq \left\|p_h\right\|_{\Omega^\sharp} \text{\,\, (b)}
\end{equation}
for some constant $C_{\Omega^\sharp}>0$. 
%The field $\mathbf{v}_{p_h}$ is typically referred to as the \itshape{velocity lifting} \normalfont of $p_h$. 
Then,  applying integration by parts on each element and the fact that $\mathbf{v}_{p_h}$ and  $ [\![\mathbf{v}_{p_h}]\!]$  vanish on $\Gamma$ and on $F \in \mathcal{F}^{\sharp}_{int}$ respectively --since  $\mathbf{v}_{p_h} \in \left[H_0^1(\Omega^\sharp)\right]^d$ is an element of the continuous space-- implies 
\begin{align*}
 \left\|p_h\right\|_{\Omega^\sharp}^2%&
 = \int_{\Omega^\sharp}p_h\left(\nabla \cdot \mathbf{v}_{p_h}\right)\,d\bm{x} = -\int_{\Omega^\sharp}\mathbf{v}_{p_h}\nabla p_h\,d\bm{x}+\sum_{K \in \mathcal{T}^{%cov ???
{\magenta{\sharp}}%???
}}\int_{\EK{\partial K}%\cap \Omega^\sharp
} \left(\mathbf{v}_{p_h}\cdot \mathbf{n}_{F}\right)p_h\,d\bm{x} \\
 %&
= -\int_{\Omega^\sharp}\mathbf{v}_{p_h}\nabla p_h\,d\bm{x} +\sum_{F\in  \mathcal{F}_{int}^{\sharp}}\int_{\EK{F}%\cap \Omega^\sharp
} \llrrparen{ \mathbf{v}_{p_h}}\cdot \mathbf{n}_F [\![p_h ]\!]\,d\gamma +\sum_{F\in \mathcal{F}_{int}^{\sharp}}\int_{\EK{F}%\cap \Omega^\sharp
}[\![\mathbf{v}_{p_h}]\!]  \cdot \mathbf{n}_F  \llrrparen{ p_h}\,d\gamma 
\\
+\int_{\EK{\mathsf{\Gamma}}\cap K} \left(\mathbf{v}_{p_h}\cdot \mathbf{n}_{\Gamma}\right)p_h\,d\gamma\\
  %&
= -\int_{\Omega^\sharp}\mathbf{v}_{p_h}\nabla p_h\,d\bm{x} +\sum_{F\in  \mathcal{F}^{\sharp}_{int}}\int_{\EK{F}%\cap \Omega^\sharp
} \llrrparen{ \mathbf{v}_{p_h}}\cdot \mathbf{n}_F [\![p_h ]\!]\,d\gamma.
\end{align*}\normalsize
%Here, we have \redhot{used the fact that $\mathbf{v}_{p_h}$ and  $ [\![\mathbf{v}_{p_h}]\!]$  vanish on $\Gamma$ and on $F \in \mathcal{F}^{\sharp}_{int}$, respectively, due to $\mathbf{v}_{p_h} \in \left[H_0^1(\Omega^\sharp)\right]^d$ being an element of the continuous space}.  
\EK{Next, we introduce} %XXXXXXXXXXXXXXXXXXXXXXXXXXXXXXXXXXXXXXXXXXXXXXXXXXXXXXXXXXXXXXXXXthe %corresponding
 interpolation error $\mathbf{e}_h:=\bm{\pi}_\mathfrak{p}%\Pi_h
 \mathbf{v}_{p_h}-\mathbf{v}_{p_h}$ for $\mathbf{v}_{p_h} \mapsto 
 \bm{\pi}_\mathfrak{p}%\Pi_h 
 \mathbf{v}_{p_h} \in V_h^{\sharp}$ in the previous expression and holds that%we obtain
\begin{align}
 \left\|p_h\right\|_{\Omega^\sharp} ^2 &= \int_{\Omega^\sharp}\mathbf{e}_{h}\nabla p_h \,d\bm{x} 
-\int_{\Omega^\sharp}
\bm{\pi}_\mathfrak{p}%\Pi_h
\mathbf{v}_{p_h}\nabla p_h\,d\bm{x}
+\sum_{F\in  \mathcal{F}_{int}^{\sharp}}\int_{\EK{F}%\cap \Omega^\sharp
} \llrrparen{ \mathbf{v}_{p_h}}\cdot \mathbf{n}_F [\![p_h ]\!]\,d\gamma \notag \\
 &\stackrel{(\ref{b_alt})}{=} %\int_{\Omega}\mathbf{e}_{h}\nabla p_h - b_h(\Pi_h\mathbf{v}_{p_h}, p_h)- \sum_{F\in  \mathcal{F}_h^{int}}\int_{F\cap \Omega} \left\{ \Pi_h \mathbf{v}_{p_h}\right\}\cdot \mathbf{n}_F [\![p_h ]\!]+ \sum_{F\in  \mathcal{F}_h^{int}}\int_{F\cap \Omega} \left\{ \mathbf{v}_{p_h}\right\}\cdot \mathbf{n}_F [\![p_h ]\!]\\  & = 
 \int_{\Omega^\sharp}\mathbf{e}_{h}\nabla p_h\,d\bm{x} - b_h(
 {\pi}_\mathfrak{p}%0\Pi_h
 \mathbf{v}_{p_h}, p_h) 
- \sum_{F\in  \mathcal{F}_{int}^{\sharp}}\int_{\EK{F}%\cap \Omega^\sharp
} \llrrparen{   \mathbf{e}_{h}}\cdot \mathbf{n}_F [\![p_h ]\!]\,d\gamma.%= {\textbf{A}}+{\textbf{B}}+{\textbf{C}}%\mathbb{I}_1+\mathbb{I}_2+\mathbb{I}_3
\label{rearr}
\end{align}
For the first term, the Cauchy--Schwarz inequality, \eqref{ineq:proj_K_to_H_elk} and \eqref{est}a, yields
\begin{align}
\Big|  \int_{\Omega^\sharp}\mathbf{e}_{h}\nabla p_h\,d\bm{x}%{\textbf{A}} %\mathbb{I}_1
\Big|
%&%=\sum_{T\in \mathcal{T}_h}\int_{T}\mathbf{e}_{h}\nabla p_h
\leq \Big(\sum_{
\EK{K}\in \mathcal{T}^{{\magenta{\sharp}}%cov  ???\sharp???
}}
\big\|\mathfrak{p}h_{\EK{K}}^{-1/2}\mathbf{e}_h\big\|^2_{\EK{K}}\Big)^{1/2} \Big(\sum_{\EK{K}\in \mathcal{T}^{{\magenta{\sharp}}%cov ???\sharp???
}} \big\| \mathfrak{p}^{-1}h_{\EK{K}}^{1/2} \nabla p_h\big\|^2_{\EK{K}}\Big)^{1/2} \notag \\
%&
\stackrel{(\ref{ineq:proj_K_to_H_elk})}{\le} C\left\|\mathbf{v}_{p_h}\right\|_{1,\Omega^\sharp}\Big(\sum_{\EK{K}\in \mathcal{T}^{{\magenta{\sharp}}%cov ???\sharp???
}}\big\|\mathfrak{p}^{-1}h_\EK{K}^{1/2}\nabla p_h\big\|^2_{\EK{K}}\Big)^{1/2} 
\stackrel{(\ref{est})a}{\le} C C_{\Omega^\sharp}^{-1} \left\|p_h\right\|_{\Omega^\sharp}\Big(\sum_{\EK{K}\in \mathcal{T}^{{\magenta{\sharp}}%cov ???\sharp???
}}\big\|\mathfrak{p}^{-1}h_{\EK{K}}^{1/2}\nabla p_h\big\|^2_{\EK{K}}\Big)^{1/2}. \label{i1}
\end{align}
Employing %Owing to 
the continuity property of the extended interpolation operator %\eqref{interp_est_revised} 
and (\ref{est}), % respectively,
\begin{align}
\left|  b_h(\bm{\pi}_\mathfrak{p} %\Pi_h
\mathbf{v}_{p_h}, p_h) %{\textbf{B}}%\mathbb{I}_2
\right|
% &
= \frac{\left|b_h(\bm{\pi}_\mathfrak{p}%\Pi_h
 \mathbf{v}_{p_h}, p_h)\right|}{\vertiii{\bm{\pi}_\mathfrak{p}%\Pi_h
  \mathbf{v}_{p_h}}_{V^c%{\sharp}
}}\vertiii{\bm{\pi}_\mathfrak{p}%\Pi_h 
\mathbf{v}_{p_h}}_{V^c%{\sharp}
} 
\leq \Big(\sup_{\mathbf{w}_h \in V_h^{\sharp} \backslash \left\{0\right\}}\frac{b_h(\mathbf{w}_h, p_h)}{\vertiii{\mathbf{w}_h}_{V^c%{\sharp}
}}\Big)C_{
proj%\Pi
}\left\|\mathbf{v}_{p_h}\right\|_{1, \Omega^\sharp}\notag \\
%&
 \leq \Big(\sup_{\mathbf{w}_h \in V_h^{\sharp} \backslash \left\{0\right\}}\frac{b_h(\mathbf{w}_h, p_h)}{\vertiii{\mathbf{w}_h}_{V^{\EK{c}}%{\sharp}
 }}{\EK{\Big)}}C_{
proj%\Pi
}
C_{\Omega^\sharp}^{-1}\left\|p_h\right\|_{\Omega^\sharp}.\label{i2}
\end{align}
To handle %treat
 the third term, we follow the steps %proceed
 similarly as \EK{we did} for the first term %$\mathbb{I}_1$
 using \eqref{ineq:proj_Fi_to_H_elk} and \EK{we conclude to}
\begin{align}
\Big%\left
|  
\sum_{F\in  \mathcal{F}_{int}^{\sharp}}\int_{\EK{F}%\cap \Omega^\sharp
} \llrrparen{   \mathbf{e}_{h}}\cdot \mathbf{n}_F [\![p_h ]\!]\,d\gamma
%{\mathbf{C}}%\mathbb{I}_3
\Big%right
|
%&%=\sum_{F\in \mathcal{F}_h^{int}}\int_{F\cap \Omega} \left\{   \mathbf{e}_{h}\right\}\cdot \mathbf{n}_F [\![p_h ]\!]
\leq 
%\left
\Big(\sum_{F\in \mathcal{F}^{\sharp}_{int}}\big\|
{\magenta{\mathfrak{p}%^{1/2}
}}
h_F^{-1/2}
\llrrparen{\mathbf{e}_h}\big\|^2_{\EK{F}%\cap \Omega^\sharp
}
%\right
\Big)^{1/2} 
%\left
\Big(\sum_{F \in \mathcal{F}^\sharp _{int}} \big\|
\mathfrak{p}^{\magenta{-1}}%{-1/2}
h_F^{1/2}
[\![p_h ]\!]\big\|^2_{\EK{F}%\cap \Omega^\sharp
}%\right
\Big)^{1/2} \notag \\
%&
 \le C
\big\|\mathbf{v}_{p_h}\big\|_{1,\Omega^\sharp}
%\left
\Big(\sum_{F \in \mathcal{F}^{\sharp} _{int}} \big\|
\mathfrak{p}^{{\magenta{-1}}}%{-1/2}
h_F^{1/2}[\![p_h ]\!]\big\|^2_{\EK{F}%\cap \Omega^\sharp
}
%\right
\Big)^{1/2}  
\le C C_{\Omega^\sharp}^{-1} \left\|p_h\right\|_{\Omega^\sharp}
%\left
\Big(\sum_{F \in \mathcal{F}^{\sharp}_{int}} \big\|
\mathfrak{p}^{\magenta{-1}}%{-1/2}
h_F^{1/2}
[\![p_h ]\!]\big\|^2_{\EK{F}}%\cap \Omega^\sharp}
%\right
\Big)^{1/2}. \label{i3}
\end{align}
Finally, we collect the inequalities (\ref{i1})-(\ref{i3}), and the proof is completed.% for $\beta=$ $\max\{C_{\Pi}\beta_{\Omega}^{-1}, \beta_{\Omega}^{-1}\}^{-1}$.
\end{proof}
 An instant conclusion  is the following. %An immediate conclusion %consequence
 %is the following:
 %
 \begin{cor}\label{cornew}
For every $p_h \in Q_h^{\sharp}$, there exists $\mathbf{w}_h \in V_h^{\sharp}$, such that
%\begin{equation}\label{infsup2a}
%\beta_1 \left\|p_h\right\|_{\Omega}-\beta_2 k_{\mathcal{T}}(p_h) \leq \frac{b_h(\mathbf{w}_h,-p_h)}{\vertiii{\mathbf{w}_h}_{V}},
%\end{equation}
\begin{equation}\label{infsup2aa}
b_h(\mathbf{w}_h,-p_h) \geq \left\|p_h\right\|_{\Omega^\sharp}^2-C_{\sigma%beta
} 
%\left
\Big( 
%\left
\big(\sum_{K\in \mathcal{T}^{%cov ???
{\magenta{\sharp}}%???
}} \big\|\mathfrak{p}^{{\magenta{-2}}%-1
}h_K\nabla p_h\big\|^2_{K}
%\right
\big)^{1/2}+
%\left
\big(\sum_{F\in \mathcal{F}^\sharp _{int}}\big\|%h_F
\mathfrak{p}^{{\magenta{-1}}%1/2
}{h_F}^{1/2}
 [\![p_h ]\!]\big\|^2_{\EK{F}%\cap \Omega^\sharp
 }
%\right
\big)^{1/2}%+  k_{\mathcal{T}}(p_h),
%\right
\Big)%k_{\mathcal{T}}(p_h) 
\left\|p_h\right\|_{\Omega^\sharp},
\end{equation}
for suitable  $C_{\sigma%beta
}>0$. 
 \end{cor}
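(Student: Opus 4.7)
The plan is to derive \eqref{infsup2aa} as a direct packaging of Lemma \ref{stab_b}, using only the bilinearity of $b_h$ in its two arguments and a rescaling of the near-maximizer of the supremum. No new estimates should be required.

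First, denote by
$$
\mathcal{R}(p_h) := \Big(\sum_{K\in \mathcal{T}^{\sharp}} \big\|\mathfrak{p}^{-2}h_K\nabla p_h\big\|^2_{K}\Big)^{1/2}+\Big(\sum_{F\in \mathcal{F}^\sharp _{int}}\big\|\mathfrak{p}^{-1}h_F^{1/2}[\![p_h ]\!]\big\|^2_{F}\Big)^{1/2}
$$
the two correction terms appearing on the right of \eqref{b_stab}. Rearranging Lemma \ref{stab_b} then gives
$$
\sup_{\mathbf{w}_h \in V_h^{\sharp}\setminus\{0\}}\frac{b_h(\mathbf{w}_h,p_h)}{\vertiii{\mathbf{w}_h}_{V^{c}}} \; \geq \; C\,\|p_h\|_{\Omega^\sharp} - \mathcal{R}(p_h).
$$

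Next, since $V_h^\sharp$ is finite-dimensional the supremum above is attained (or approached arbitrarily closely) at some $\widetilde{\mathbf{w}}_h\neq 0$, which by positive homogeneity of the quotient can be rescaled to satisfy $\vertiii{\widetilde{\mathbf{w}}_h}_{V^{c}}=\|p_h\|_{\Omega^\sharp}/C$. Multiplying the displayed inequality by $\vertiii{\widetilde{\mathbf{w}}_h}_{V^{c}}$ then yields
$$
b_h(\widetilde{\mathbf{w}}_h,p_h) \;\geq\; \|p_h\|_{\Omega^\sharp}^{2} - C^{-1}\,\mathcal{R}(p_h)\,\|p_h\|_{\Omega^\sharp}.
$$

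Finally, to match the sign convention in \eqref{infsup2aa}, I would set $\mathbf{w}_h := -\widetilde{\mathbf{w}}_h$; the bilinearity of $b_h$ in its two arguments gives $b_h(\mathbf{w}_h,-p_h) = b_h(\widetilde{\mathbf{w}}_h,p_h)$, and the inequality above becomes exactly \eqref{infsup2aa} with the identification $C_{\sigma}:=C^{-1}$. The only point requiring a little care is the bookkeeping of constants produced by the rescaling and the verification that $\mathbf{w}_h$ indeed lies in $V_h^\sharp$ (which is immediate, since $V_h^\sharp$ is a linear space); all analytic content has already been carried out in Lemma \ref{stab_b}.
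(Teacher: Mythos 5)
Your proposal is correct, but it reaches \eqref{infsup2aa} by a genuinely different route than the paper. The paper does not invoke the statement of Lemma \ref{stab_b} at all: it goes back \emph{inside} that lemma's proof, takes the explicit test function $\mathbf{w}_h=\bm{\pi}_\mathfrak{p}\mathbf{v}_{p_h}$ (the interpolant of the velocity field supplied by the surjectivity of the divergence), rearranges the identity \eqref{rearr}, and reuses the bounds \eqref{i1} and \eqref{i3} to absorb the two residual terms, ending with $C_\sigma=\max\{C_1,C_2\}$. You instead treat Lemma \ref{stab_b} as a black box, rearrange \eqref{b_stab}, and realize $\mathbf{w}_h$ as a rescaled maximizer of the discrete supremum. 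Both arguments are sound. What each buys: the paper's version is constructive and produces the same $\mathbf{w}_h$ whose normalization is then exploited in the proof of Theorem \ref{infs}, and it keeps the constant traceable to \eqref{i1}--\eqref{i3}; your version is shorter, requires no knowledge of how Lemma \ref{stab_b} was proved, and automatically inherits the exact weights $\mathfrak{p}^{-2}h_K$ and $\mathfrak{p}^{-1}h_F^{1/2}$ as they appear in \eqref{b_stab}, at the price of a non-constructive $\mathbf{w}_h$ and a constant $C_\sigma=C^{-1}$ expressed through the inf--sup constant. One small point of rigor: your parenthetical ``or approached arbitrarily closely'' is not a valid fallback, since the conclusion asserts the inequality for a single fixed $\mathbf{w}_h$ with no $\varepsilon$ slack; the argument genuinely needs the supremum to be attained. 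It is attained, because $\vertiii{\cdot}_{V^c}$ is a norm on the finite-dimensional space $V_h^\sharp$ and the homogeneous quotient achieves its maximum on the compact unit sphere, but you should state that as the reason rather than hedge. The sign manipulation $b_h(-\widetilde{\mathbf{w}}_h,-p_h)=b_h(\widetilde{\mathbf{w}}_h,p_h)$ and the degenerate case $p_h=0$ are handled correctly.
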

 \begin{proof}
 We denote by $C_1$, $C_2$ the constants appearing in (\ref{i1}), (\ref{i3}), we rearrange (\ref{rearr}) \EK{and we derive} $
 b_h(\bm{\pi}_\mathfrak{p}%\Pi_h 
 \mathbf{v}_{p_h},-p_h) \geq \left\|p_h\right\|_{\Omega^\sharp}^2
-\left|        
 \int_{\Omega^\sharp}\mathbf{e}_{h}\nabla p_h \,d\bm{x}
\right|
-\left|
 \sum^{ }_{F\in  \mathcal{F}_{int}^{\sharp}}\int_{\EK{F}%\cap \Omega^\sharp
 } \llrrparen{   \mathbf{e}_{h}}\cdot \mathbf{n}_F [\![p_h ]\!]%\mathbf{C}
\,d\gamma\right|
$, hence,  the result clearly follows for $\mathbf{w}_h=\bm{\pi}_\mathfrak{p}%\Pi_h
 \mathbf{v}_{p_h}$ with $C_{\sigma}=\max\left\{C_1, C_2\right\}$.
 \end{proof} 
 %
%Now we are ready to state the main result of this section. For its proof, we will also need the useful bound
%\begin{equation}\label{k}
%k_{\mathcal{T}}(p_h) ^2\leq 2\min\left\{\gamma_0, \gamma_1\right\}^{-1} c_h(0,p_h;p_h),% \leq C_{\kappa}\left\|p_h\right\|^2_{\Omega_{\mathcal{T}}},
%\end{equation} 
%which follows immediately  by the respective definitions.
Next, we pass to the discrete inf--sup stability which is being proven below. 
\begin{thm}%[Discrete inf--sup stability]
\label{infs}
There is a constant $c_{bil}>0$, such that for all $(\mathbf{u}_h, p_h) \in V_h^{\sharp}\times Q_h^{\sharp}$, we have
\begin{equation}\label{infsup}
c_{bil}\vertiii{(\mathbf{u}_h, p_h)}_{V^\EK{c}%{\sharp}
,Q^\EK{c}%{\sharp}
}\leq \sup_{(\mathbf{v}_h, q_h)\in V_h^{\sharp}\times Q_h^{\sharp}}\frac{A_h(\mathbf{u}_h, p_h; \mathbf{v}_h, q_h) %+J_h(\mathbf{u}_h, p_h; \mathbf{v}_h, q_h)
}{\vertiii{(\mathbf{v}_h, q_h)}_{V^\EK{c}%{\sharp}
,Q^{\EK{c}%\sharp
}}}.
\end{equation}
\end{thm}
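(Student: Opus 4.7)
The plan is to follow a Brezzi-style argument tailored to the discontinuous Galerkin setting: for a given $(\mathbf{u}_h, p_h) \in V_h^\sharp \times Q_h^\sharp$ I will construct an explicit test pair $(\mathbf{v}_h, q_h) = (\mathbf{u}_h - \delta \mathbf{w}_h, -p_h)$, with $\delta>0$ to be chosen, such that $A_h(\mathbf{u}_h, p_h; \mathbf{v}_h, q_h)$ controls $\vertiii{(\mathbf{u}_h, p_h)}_{V^c,Q^c}^2$ from below while $\vertiii{(\mathbf{v}_h, q_h)}_{V^c,Q^c}$ is bounded from above by $\vertiii{(\mathbf{u}_h, p_h)}_{V^c,Q^c}$. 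Dividing then produces the uniform constant $c_{bil}$.

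The two building blocks are \emph{coercivity} and the \emph{discrete surjectivity of the divergence}. Because the coupling forms $b_h(\mathbf{u}_h, q_h)$ and $b_h(\mathbf{v}_h, p_h)$ enter $A_h$ with like signs, testing with $q_h = -p_h$ makes them cancel, so $A_h(\mathbf{u}_h, p_h; \mathbf{u}_h, -p_h) = \widetilde{a}_h(\mathbf{u}_h, \mathbf{u}_h) \geq c_{coer}\vertiii{\mathbf{u}_h}_{V^c}^2$ by Lemma \ref{a_coerc}. Independently, Corollary \ref{cornew} furnishes $\mathbf{w}_h = \bm{\pi}_\mathfrak{p}\mathbf{v}_{p_h} \in V_h^\sharp$ with $\vertiii{\mathbf{w}_h}_{V^c} \leq C_w\|p_h\|_{\Omega^\sharp}$ (which follows from the continuous-level stability \eqref{est} and the continuity of the extended interpolant $\bm{\pi}_\mathfrak{p}$) satisfying $b_h(\mathbf{w}_h, -p_h) \geq \|p_h\|_{\Omega^\sharp}^2 - C_\sigma(T_1 + T_2)\|p_h\|_{\Omega^\sharp}$, where $T_1$ and $T_2$ denote the $hp$-weighted volume-gradient and interior-jump pressure terms appearing in \eqref{infsup2aa}.

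Testing with the combined pair produces
\[
A_h(\mathbf{u}_h, p_h; \mathbf{u}_h - \delta\mathbf{w}_h, -p_h) = \widetilde{a}_h(\mathbf{u}_h, \mathbf{u}_h) - \delta\widetilde{a}_h(\mathbf{u}_h, \mathbf{w}_h) + \delta b_h(\mathbf{w}_h, -p_h).
\]
I would then dominate the cross term $\delta\widetilde{a}_h(\mathbf{u}_h, \mathbf{w}_h)$ via Young's inequality and the continuity bound \eqref{cont3_revised}, at the cost of $\tfrac{c_{coer}}{2}\vertiii{\mathbf{u}_h}_{V^c}^2 + \tfrac{\delta^2 C_a^2 C_w^2}{2c_{coer}}\|p_h\|_{\Omega^\sharp}^2$, and reduce $T_1 + T_2 \leq C_{inv}\|p_h\|_{\Omega^\sharp}$ through the $H^1$–$L^2$ inverse estimate \eqref{estimate:nabla_v} together with the trace inverse \eqref{eq:Fi_to_K}. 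For $\delta$ sufficiently small, the aggregate bound becomes
\[
A_h(\mathbf{u}_h, p_h; \mathbf{u}_h - \delta\mathbf{w}_h, -p_h) \geq c_1\vertiii{\mathbf{u}_h}_{V^c}^2 + c_2\|p_h\|_{\Omega^\sharp}^2.
\]
Lemma \ref{ext} then identifies $\|p_h\|_{\Omega^\sharp}$ with $\|p_h\|_{\Omega^{cov}} = \vertiii{p_h}_{Q^c}$ up to equivalence, while the triangle inequality together with $\vertiii{\mathbf{w}_h}_{V^c} \leq C_w\|p_h\|_{\Omega^\sharp}$ yields the companion bound $\vertiii{(\mathbf{u}_h - \delta\mathbf{w}_h, -p_h)}_{V^c,Q^c} \leq C\vertiii{(\mathbf{u}_h, p_h)}_{V^c,Q^c}$, and the inf-sup estimate follows.

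The main obstacle is the absorption of $T_1, T_2$: unlike the velocity cross term, the inverse-estimate bound $T_1 + T_2 \lesssim \|p_h\|_{\Omega^\sharp}$ comes without any free small parameter, so $\delta C_\sigma C_{inv}\|p_h\|_{\Omega^\sharp}^2$ is of the same order as the main pressure gain $\delta\|p_h\|_{\Omega^\sharp}^2$. Closure therefore rests on a joint admissibility condition involving $\delta$, the penalization $\sigma$ (which enters both $C_\sigma$ and $c_{coer}$), and the geometric constants built into Assumption \ref{ass:basic_geometry} and \eqref{ineq:global}; these must be quantified carefully through the inverse constants $C_{INV}$ from Lemma \ref{lem:Fi_to_K} to guarantee that an admissible range of $\delta$ actually exists uniformly in the mesh and in $\mathfrak{p}$.
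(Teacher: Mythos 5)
There is a genuine gap, and it sits exactly where you flag it: the absorption of the residual pressure terms $T_1=\bigl(\sum_{K}\|\mathfrak{p}^{-2}h_K\nabla p_h\|_K^2\bigr)^{1/2}$ and $T_2=\bigl(\sum_{F}\|\mathfrak{p}^{-1}h_F^{1/2}[\![p_h]\!]\|_F^2\bigr)^{1/2}$ coming from Corollary \ref{cornew}. Bounding $T_1+T_2\le C_{inv}\|p_h\|_{\Omega^\sharp}$ by the inverse estimates \eqref{estimate:nabla_v} and \eqref{eq:Fi_to_K} leaves you with $b_h(\mathbf{w}_h,-p_h)\ge (1-C_\sigma C_{inv})\|p_h\|_{\Omega^\sharp}^2$, and the product $C_\sigma C_{inv}$ is a fixed, order-one quantity determined by the approximation constants of Lemma \ref{lem:hp_projection_error_bounds} and the inverse constants of Lemma \ref{lem:Fi_to_K}; it is not controlled by $\delta$, by $\sigma$, or by any other free parameter. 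Your closing paragraph treats this as a quantification task (``these must be quantified carefully\dots to guarantee that an admissible range of $\delta$ actually exists''), but no such range exists in general: both the loss $\delta C_\sigma(T_1+T_2)\|p_h\|$ and the gain $\delta\|p_h\|^2$ scale linearly in $\delta$, so shrinking $\delta$ cannot help, and there is no mechanism forcing $C_\sigma C_{inv}<1$. The two-component test pair $(\mathbf{u}_h-\delta\mathbf{w}_h,-p_h)$ therefore cannot close the argument.

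The missing idea, which is how the paper proceeds, is to enrich the test function with a \emph{third} component proportional to the pressure gradient, $(\mathbf{v}_h,q_h)=(\mathbf{u}_h,-p_h)+\zeta_1(-\mathbf{w}_h,0)+\zeta_2(-\mathfrak{p}^{-4}h_K^2\nabla p_h,0)$. Testing $A_h$ against this extra block produces, via the integrated-by-parts form \eqref{b_alt} and the bound \eqref{aux_V}, the quantities $\sum_K\|\mathfrak{p}^{-2}h_K\nabla p_h\|_K^2$ and $\sum_F\|\mathfrak{p}^{-1}h_F^{1/2}[\![p_h]\!]\|_F^2$ with a \emph{positive} sign and an independent tunable coefficient $\zeta_2$ (the constants $C_5$, $C_6$ in \eqref{partial22}). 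One then keeps $T_1$, $T_2$ as they are rather than converting them to $\|p_h\|_{\Omega^\sharp}$, and chooses $\zeta_2$ large relative to $\zeta_1$ so that $\zeta_2 C_5-\zeta_1 C_3>0$ and $\zeta_2 C_6-\zeta_1 C_3>0$, and finally $\zeta_1$ small enough that the coercivity of $\widetilde a_h$ from Lemma \ref{a_coerc} survives the two velocity cross terms. Everything else in your outline (the cancellation $b_h(\mathbf{u}_h,-p_h)+b_h(\mathbf{u}_h,-p_h)$ under the sign-symmetric coupling, the use of Corollary \ref{cornew}, the norm-equivalence step via Lemma \ref{ext}, and the final boundedness of the test pair) matches the paper and is sound once this third component is added.
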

\begin{proof} 
Similar %Analogous
 to \cite{AreKarKat22}% the ones in \cite[Theorem. 5.1]{MLLR12} and \cite[Theorem 5.3]{BCM15})% for unfitted continuous methods
, let $(\mathbf{u}_h, p_h) \in V^\sharp_h\times Q^\sharp_h$ and note by Corollary \ref{cornew} that  there exists $\mathbf{w}_h \in V^\sharp_h$ in (\ref{infsup2aa}). 
As a matter of fact, there is no loss of generality in \EK{setting} %taking
 $\vertiii{\mathbf{w}_h}_{V^\EK{c}%\sharp
}=\left\|p_h\right\|_{\mathrm{\Omega^\sharp}}$ and then inequality (\ref{infsup2aa}) combined with the %n $\epsilon$-
Young inequality and factorizing with respect to $ \left\|p_h\right\|^2_{\mathrm{\Omega^\sharp}}$  we obtain %yield
\begin{align}
b_h(\mathbf{w}_h, -p_h) %\geq  \left\|p_h\right\|^2_{\mathrm{\Omega^\sharp}} - C_{
%\sigma%beta
%}
%\left((\sum_{T\in \mathcal{T}^{cov ???\sharp???}} \left\|\mathfrak{p}^{-1}h_T\nabla p_h\right\|^2_{T}
%%\right
%)^{1/2}+%\left
%(\sum_{F\in \mathcal{F}^\sharp _{int}}\left\|%h_F
%\mathfrak{p}^{1/2}
 %[\![p_h ]\!]\right\|^2_{F\cap \Omega^\sharp}%\right
%)^{1/2}%+  k_{\mathcal{T}}(p_h),%k_{\mathcal{T}}(p_h)
%\right)
%\left\|p_h\right\|_{\mathrm{\Omega^\sharp}} 
%\\
\geq \big(1-\frac{C_{
\sigma%\beta
}
\lambda%\epsilon
}{2}
\big)\left\|p_h\right\|^2_{\mathrm{\Omega^\sharp}}-\frac{C_{
\sigma%beta
}}{2\lambda%\epsilon
}
\Big(\big(\sum_{K\in \mathcal{T}^{{\magenta{\sharp}}%cov ???\sharp???
}} \left\|\mathfrak{p}^{\magenta{-2}}%{-1}
h_K\nabla p_h\right\|^2_{K}
%\right
\big)^{1/2}
%\\
+ %\left
\big(\sum_{F\in \mathcal{F}^\sharp _{int}}\big\|%h_F
\mathfrak{p}^{\magenta{-1}}%{-1/2} 
h_F^{1/2}
 [\![p_h ]\!]\big\|^2_{\EK{F}%\cap \Omega^\sharp
}%\right
\big)^{1/2}%+  k_{\mathcal{T}}(p_h),%k_{\mathcal{T}}(p_h)
\Big)
%\left\|p_h\right\|_{\mathrm{\Omega^\sharp}} 
%k_{\mathcal{T}}(p_h)
 ^2\notag \\
\geq \big(1-\frac{C_{
\sigma%beta
}
\lambda%epsilon
}{2}\big)\left\|p_h\right\|^2_{\mathrm{\Omega^\sharp}}
-\frac{C_{\sigma%beta
}}{\lambda%epsilon
}\sum_{K\in \mathcal{T}^{{\magenta{\sharp}}%cov???\sharp???
}} \big\|\mathfrak{p}^{\magenta{-2}}%{-1}
h_K\nabla p_h\big\|^2_{{\EK{K}%\cap\mathrm{\Omega^\sharp
}}
%\\
-\frac{C_{\sigma%beta
}}{\lambda%epsilon
}\sum_{F \in \mathcal{F}^{\sharp}_{int}}\big\|\mathfrak{p}% h_F
^{\magenta{-1}}%{-1/2} 
h_F^{1/2}
 [\![  p_h ]\!]\big\|_{\EK{F}%\cap \mathrm{\Omega^\sharp
}^2.
\label{b_est_n}
\end{align}
\EK{We focus now on  %Our purpose is to show 
showing} that for a sensible choice of parameters $\zeta%delta
_1>0$ and $\zeta%delta
_2>0$, there exists a constant $c_{bil}>0$ such that the test pair $(\mathbf{v}_h,q_h)=(\mathbf{u}_h, -p_h) +\zeta%delta
_1(-\mathbf{w}_h,0){\red{+\zeta%delta
_2(
{\magenta{-}}\mathfrak{p}^{-4}
%_\redhot{\sum_{K\in{\mathcal{T}}^\sharp}{h_K^2 \nabla p_h}}
\EK{\theta}%\redhot{h_{{K}}^2}
 \nabla p_h,0)}}$\EK{, for proper $\EK{\theta\in\mathbb{R}}$ that will be chosen later,} yields %satisfies 
\begin{equation}\label{surr}
A_h(\mathbf{u}_h, p_h; \mathbf{v}_h,q_h)\geq c_{bil}\vertiii{(\mathbf{u}_h,p_h)}_{V^\EK{c}%\sharp
,Q^\EK{c}%\sharp
} \vertiii{(\mathbf{v}_h,q_h)}_{V^\EK{c}%\sharp
,Q^\EK{c}%\sharp
},
\end{equation}
whereby the desired outcome % assertion 
(\ref{infsup}) is then provided.

Consequently%To this end
, if we initially test with $(\mathbf{u}_h,-p_h)$ using the coercivity estimate (\ref{coercivit}) of $\widetilde{a}_h$, we derive%get 
\begin{align}
A_h(\mathbf{u}_h, p_h;&\mathbf{u}_h, -p_h) 
=\widetilde{a}_h(\mathbf{u}_h, \mathbf{u}_h) %+ j_u(\mathbf{u}_h,\mathbf{u}_h) + c_h(p_h, p_h) + j_p(p_h,p_h)
%\notag  \\ 
%& 
\geq c_{a} \vertiii{\mathbf{u}_h}_{V^\EK{c}%\sharp
}^2 
%\TODO{+ 
%\gamma \sum_{F \in \mathcal{F}_h^{int}}\left\| h_F^{1/2} [\![  p_h ]\!]\right\|_{F\cap \mathrm{\Omega}}^2% + j_p(p_h,p_h)
%--na--bgei--
%}
. 
\label{partial00}
\end{align}
\EK{Thus}%Next
, we consider $(-\mathbf{w}_h,0)$ in \eqref{b_est_n} and \EK{utilize} the continuity estimate (\ref{cont3_revised}) of  $\widetilde{a}_h$ \EK{together with} %along with
the %n $\epsilon$--
Young inequality, 
\begin{align}
%&
A_h(\mathbf{u}_h, p_h;-\mathbf{w}_h, 0) 
= -\widetilde{a}_h(\mathbf{u}_h, \mathbf{w}_h) % - j_u(\mathbf{u}_h,\mathbf{w}_h) 
+ b_h(\mathbf{w}_h,-p_h)%\notag\\
%& \geq - C_a \vertiii{\mathbf{u}_h}_{V}\left\|p_h\right\|_{\mathrm{\Omega}} +\Big(1-\frac{C_{\beta}\epsilon}{2}\Big)\left\|p_h\right\|^2_{\mathrm{\Omega}}
%-\frac{C_{\beta}}{\epsilon}\sum_{T\in \mathcal{T}_h} %\left\|h_T\nabla p_h\right\|^2_{{\blue %T\cap\mathrm{\Omega}}}\nonumber\\
%& \qquad\qquad\qquad\qquad\qquad\qquad\qquad\qquad\qquad
%-\frac{C_{\beta}}{\epsilon}\sum_{F \in %\mathcal{F}_h^{int}}\left\| h_F^{1/2} [\![  p_h %]\!]\right\|_{F\cap \mathrm{\Omega}}^2
%\notag \\
%&
 \geq -\frac{C_a}{2\lambda%epsilon
}\vertiii{\mathbf{u}_h}_{V^\EK{c}%\sharp
}^2
+
\big(1-\frac{C_{a}\lambda%epsilon
}{2}-\frac{C_\sigma%beta
\lambda%epsilon
}{2}
\big)\left\|p_h\right\|^2_{\mathrm{\Omega^\sharp}}
\notag \\
-\frac{C_{\sigma%beta
}
}{\lambda%epsilon
}\sum_{K\in \mathcal{T}^{{\magenta{\sharp}}%cov???\sharp???
}} \left\|\mathfrak{p}^{\magenta{-2}}%{-1}
h_K\nabla p_h\right\|^2_{{\EK{K}%\cap\mathrm{\Omega^\sharp}
}}%\nonumber\\
%& %\qquad\qquad\qquad\qquad\qquad\qquad\qquad\qquad\qquad
%\qquad\qquad
-\frac{C_{\sigma
}}{
\lambda%epsilon
}\sum_{F \in \mathcal{F}^\sharp_{int}}\big\|\mathfrak{p}% h_F
^{\magenta{-1}}%{-1/2}
{h_F}^{1/2}
[\![  p_h ]\!]\big\|_{\EK{F}%\cap \mathrm{\Omega^\sharp}
}^2 \nonumber\\
%& 
\geq -C_1\vertiii{\mathbf{u}_h}_{V^\EK{c}%\sharp
}^2
+C_2\left\|p_h\right\|^2_{\mathrm{\Omega^\sharp}}
-C_3\sum_{K\in \mathcal{T}^{{\magenta{\sharp}}%cov???\sharp???
}} \left\|\mathfrak{p}^{\magenta{-2}}%{-1}
h_K\nabla p_h\right\|^2_{{\EK{K}%\cap\mathrm{\Omega^\sharp}
}}
-C_3\sum_{F \in \mathcal{F}^\sharp_{int}}\big\|% h_F
\mathfrak{p}^{\magenta{-1}}%{-1/2}
{h_F}^{1/2} [\![  p_h ]\!]\big\|_{\EK{F}%\cap \mathrm{\Omega^\sharp}
}^2,
\label{partial11}
\end{align}
where $C_1=\frac{C_a}{2\lambda%epsilon
}$, $C_2=1-\frac{C_{a}+C_\sigma%beta
}{2}\lambda%epsilon
$ and $C_3=\frac{C_{\sigma%beta
}}{\lambda%epsilon
}$ are positive constants for %sufficiently 
small enough $0<\lambda%epsilon
<\frac{2}{C_a+C_\sigma%beta
}$. 
Therefore, to gain the desired control and to \EK{balance} %compensate  over 
the negative contribution \redhot{$\|\mathfrak{p}^{-4}\redhot{h_K^{2}}\nabla p_h\|_{\EK{K}%\cap\mathrm{\Omega^\sharp}
}^2$} in \eqref{partial11}, we test with \redhot{$({\magenta{-}}\mathfrak{p}^{-4}
\EK{\theta}%h_{\EK{K}}^{2}
\nabla p_h,0)$} using the continuity estimate \eqref{cont3_revised} for $\widetilde{a}_h$, the Cauchy-Schwarz inequality, the inverse estimate 
\eqref{ineq:inv_est} %{deriv_estimate1} 
and %$\epsilon$--
Young inequality. In particular,% in the following fashion:
\begin{align}
%\redhot{na-tsekaro-Nomizi_-an-balo-Sum_sozei}
%\\
A_h(\mathbf{u}_h, p_h;
\redhot{{\magenta{-}}
\mathfrak{p}^{-4}
\EK{\theta}%h_{\EK{K}}^{2}
\nabla p_h},0) 
= \widetilde{a}_h(\mathbf{u}_h, \redhot{{{\magenta{-}}}\mathfrak{p}^{-4}
\EK{\theta}%h_{\EK{K}}^{2}
}
\nabla p_h) %+ j_u(\mathbf{u}_h, h^2\nabla p_h) 
+ b_h(\redhot{{{\magenta{-}}}\mathfrak{p}^{-4}
\EK{\theta}%h_{\EK{K}}^{2}
}
\nabla p_h,p_h)
\notag  \\ 
%& 
\geq - |\widetilde{a}_h(\mathbf{u}_h,
\redhot{{{\magenta{-}}}
\mathfrak{p}^{-4}
\EK{\theta}%h_{\EK{K}}^{2}
}
\nabla p_h) %+ j_u(\mathbf{u}_h, h^2\nabla p_h)
| 
{{\magenta{-}}}%+ 
\sum_{K\in \mathcal{T}^{{{\magenta{\sharp}}}%cov???\sharp???
}}\left\| \mathfrak{p}^{-2} 
\EK{\theta^{{1}/{2}}}%h_K
\nabla p_h \right\|_{\EK{K}%\cap\mathrm{\Omega^\sharp}
}^2
% \nonumber \\
%&
{{\magenta{+}}}%-
\sum_{F\in\mathcal{F}^\sharp_{int}}\int_{\EK{F}%\cap\mathrm{\Omega^\sharp}
}\llrrparen{\mathfrak{p}^{-4}
\EK{\theta}%h_F^2
\nabla p_h}\cdot \mathbf{n}_F [\![  p_h ]\!]\,d\gamma 
\notag \\
 \geq - C_a\vertiii{\mathbf{u}_h}_{V^\EK{c}%\sharp
}\vertiii{\mathfrak{p}^{-4}
\EK{\theta}%h_{\EK{K}}^2
\nabla p_h}_{V^\EK{c}%\sharp
}  
{{\magenta{-}}}%+ 
\sum_{K\in \mathcal{T}^{
{{\magenta{\sharp}}}%cov???\sharp???
}}\left\| \mathfrak{p}^{-2}
\EK{\theta^{1/2}}%h_K
\nabla p_h \right\|_{\EK{K}%\cap\mathrm{\Omega^\sharp}
}^2  \nonumber \\
 \TODO{%\qquad\qquad 
{{\magenta{+}}}%- 
\widehat{C} 
\big(
%\sum_{T\in\mathcal{T}^{cov???\sharp???}}
\sum_{F\in\mathcal{F}^\sharp_{int}}
%\left
\|\mathfrak{p}^{-3}
\EK{\theta^{{3}/{4}}}%h_F^{3/2}
\nabla p_h\cdot \mathbf{n}_F
%\right
\|_{\EK{F}%\cap\mathrm{\Omega^\sharp}
}^2\big)^{1/2} 
\big(\sum_{F\in\mathcal{F}^\sharp_{int}}
%\left
\|\mathfrak{p}^{-1}
\EK{\theta^{1/4}}%h_F^{1/2}
[\![p_h ]\!]
%\right
\|_{\EK{F}%\cap\mathrm{\Omega^\sharp}
}^2
\big)^{1/2}
}
 \nonumber 
\end{align}
and we conclude to
\begin{align}
 A_h(\mathbf{u}_h, p_h;
\redhot{{\magenta{-}}
\mathfrak{p}^{-4}
\EK{\theta}%h_{\EK{K}}^{2}
\nabla p_h},0)\geq -\frac{C_a}{2\lambda%epsilon
_1}\vertiii{\mathbf{u}_h}_{V^\EK{c}%\sharp
}^2 -\frac{C_a\lambda%epsilon
_1}{2}\vertiii{\mathfrak{p}^{-4}
\EK{\theta}%h_{\EK{K}}^2
\nabla p_h}_{V^\EK{c}%\sharp
}^2 
{{\magenta{-}}}%+ 
\sum_{K\in \mathcal{T}^{
{{\magenta{\sharp}}}%cov???\sharp???
}}\left\| \mathfrak{p}^{-2}
\EK{\theta^{1/2}}%h_K
\nabla p_h \right\|_{\EK{K}%\cap\mathrm{\Omega^\sharp}
}^2  \nonumber \\
 %\qquad\qquad\qquad\qquad\qquad\qquad\qquad
{{\magenta{+}}}%-
 C 
\big(\sum_{K\in\mathcal{T}^{{{\magenta{\sharp}}}%cov???\sharp???
}}
%\left
\| \TODO{{\mathfrak{p}}^{-2}
%\EK{\theta^{1/2}}
\EK{\theta^{3/4}{h^{-1/2}_K}}%h_K
}\TODO{\nabla}p_h
%\right
\|_{K}^2
\big)^{1/2} 
\big(\sum_{F\in\mathcal{F}^\sharp_{int}}
%\left
\|\mathfrak{p}^{-1}
\EK{\theta^{1/4}}%h_F^{1/2}
[\![p_h ]\!]
%\right
\|_{\EK{F}%\cap\mathrm{\Omega^\sharp}
}^2
\big)^{1/2} \nonumber \\
 %SOS!!!)
\stackrel{(\ref{aux_V}),\, \theta=h^2_K}{\geq}
 -\frac{C_a}{2\lambda%epsilon
_1}\vertiii{\mathbf{u}_h}_{V^\EK{c}%\sharp
}^2 
- \frac{\widetilde{C}C_a\lambda%epsilon
_1}{2}
\EK{\sum_{\mathcal{K}\in  {\mathcal{T}^{cov}}}}
\left\|\mathfrak{p}^{-2}
{\EK{h_{K}}}%h_{\EK{K}}
\nabla p_h \right\|^2_{\mathcal{K}}
%
%\redhot{\vertiii{\mathfrak{p}^{-2}
%h_{\EK{K}}
%\nabla p_h}_{V^\EK{c}%\sharp
%}^2 }
{{\magenta{-}}}%+
 \sum_{K\in \mathcal{T}^{{{\magenta{\sharp}}}%cov???\sharp???
}}
\left\|\mathfrak{p}^{-2} 
%\EK{\theta^{1/2}}%
h_K
\nabla p_h \right\|_{\EK{K}%\cap\mathrm{\Omega^\sharp}
}^2 
\nonumber \\
{{\magenta{+}}}%- 
\frac{C\lambda%epsilon
_2}{2} 
\redhot{ \sum_{K\in \mathcal{T}^{{{\magenta{\sharp}}}}}}\left\|\TODO{\mathfrak{p}^{-2}
%\EK{\theta^{3/4}h^{-1/2}_K}%
h_{\EK{K}} 
\nabla}p_h\right\|_{\EK{K}}%{\mathrm{\Omega^{{{\magenta{\sharp}}}%cov???\sharp???
%}}}
^2 %\nonumber \\
 %\qquad\qquad\qquad\qquad\qquad\qquad\qquad
%\qquad\qquad\qquad\qquad
{{\magenta{+}}}%- 
\frac{C}{2\lambda%epsilon
_2}\sum_{F\in\mathcal{F}^\sharp_{int}}\left\|\mathfrak{p}^{-1}
%\EK{\theta^{1/4}}%
h_F^{1/2}
[\![p_h ]\!]\right\|_{\EK{F}%\cap\mathrm{\Omega^\sharp}
}^2 \nonumber \\
 \geq
 -\frac{C_a}{2\lambda_1}\vertiii{\mathbf{u}_h}_{V^\EK{c}%\sharp
}^2 
%-\frac{\widetilde{C}C_a\lambda_1}{2}\vertiii{\mathfrak{p}^{-2}h\nabla p_h}_{V^\sharp}^2 
{{\magenta{-}}}%+ 
\sum_{K\in \mathcal{T}^{{{\magenta{\sharp}}}%cov???\sharp???
}}\left\|\mathfrak{p}^{-2} 
h_K
\nabla p_h \right\|_{\EK{K}%\cap\mathrm{\Omega^\sharp}
}^2 
- \frac{C_p}{2}(\widetilde{C}C_a\lambda_1
{{\magenta{-}}}%+
C \lambda_2) \left\|\TODO{\mathfrak{p}^{-2}h_{\EK{K}} \nabla}p_h\right\|_{\mathrm{\Omega^{
{{\magenta{\sharp}}}%cov???\sharp???
}}}^2 
\nonumber \\
 %\qquad\qquad\qquad\qquad\qquad\qquad\qquad
%\qquad\qquad\qquad\qquad
{{\magenta{+}}}%- 
\frac{C}{2\lambda_2}\sum_{F\in\mathcal{F}^\sharp_{int}}\left\|\mathfrak{p}^{-1}
h_F^{1/2}
[\![p_h ]\!]\right\|_{\EK{F}%\cap\mathrm{\Omega^\sharp}
}^2 \nonumber \\
\geq 
-C_4\vertiii{\mathbf{u}_h}_{V^\EK{c}%\sharp
}^2 
%-C_5\left\|p_h\right\|_{\mathrm{\Omega^{cov}}}^2 
%{{\magenta{-}}}%
+ 
C_5\sum_{K\in \mathcal{T}^{{{\magenta{\sharp}}}%cov???\sharp???
}}\left\| \mathfrak{p}^{-2}
h_K
\nabla p_h \right\|_{\EK{K}%\cap\mathrm{\Omega^\sharp}
}^2  
{\magenta{+}}%-
C_6\sum_{F\in\mathcal{F}^\sharp_{int}}\left\|\mathfrak{p}^{-1}
h_F^{1/2}
[\![p_h ]\!]\right\|_{\EK{F}%\cap\mathrm{\Omega^\sharp}
}^2 \label{partial22}
\end{align}
%
%\TODO{where $C_4=\frac{C_a}{2\lambda%epsilon
%_1}$, $C_5=1 - \frac{c_p}{2}(\widetilde{C}C_a\lambda_1{{\magenta{-}}}%+
%{C}{\lambda_2})$ and $C_6=\frac{C}{2\lambda%epsilon
%_2}$ are positive constants for sufficient small $0<\lambda_1<2/(c_p\widetilde{C}C_a)$ and $0<\lambda_2<2/(c_pC)(1-c_p\widetilde{C}%C_a\lambda_1/2)$.}
%
%\newline
{\green{where %\EK{we have chosen $\theta=h^2_K$,}
 $C_4=\frac{C_a}{2\lambda%epsilon
_1}$,
 $C_5=1 - \frac{c_p}{2}(\widetilde{C}C_a\lambda_1{{\magenta{-}}}%+
{C}{\lambda_2})$ and $C_6=\frac{C}{2\lambda%epsilon
_2}$ are positive constants for properly chosen %sufficient small 
$\lambda_1 > 2/(c_p \widetilde{C}C_a)>0$ and $0<\lambda_2<2/(c_pC)(c_p\widetilde{C}C_a\lambda_1/2-1)$.}}

We note that in the last inequality  $\widetilde{C}>0$, \TODO{we take $\sigma=\mathfrak{p}h_{\EK{K}}^{-1/2}$} and we derive the bound 
\begin{eqnarray}\label{aux_V}
\vertiii{\mathfrak{p}^{-4}
\EK{\theta}%h_{\EK{K}}^2
\nabla p_h}^2_{V^\EK{c}%\sharp
} = \left\|\mathfrak{p}^{-4}
\EK{\theta}%h_{\EK{K}}^2
\nabla\nabla p_h \right\|^2_{\mathrm{\Omega^{
{\EK{cov}}%cov???\sharp???
}}} 
+ 
\EK{\sum_{F \in {\mathcal{T}^\sharp\cap\mathsf{\Gamma}}}}
\left\|\mathfrak{p}^{-3}
\EK{\theta}
h^{-1/2}_{\EK{F}}\nabla p_h\right\|^2_\EK{F}%{\mathrm{\Gamma}} 
+ \sum_{F\in \mathcal{F}^\sharp_{int}} \left\|\mathfrak{p}^{-3}
\EK{\theta}
h^{-1/2}_{\EK{F}}%h_{\EK{F}}^{3/2}
[\![\nabla p_h ]\!]\right\|^2_{\EK{F}%\cap \mathrm{\Omega^\sharp}
}
\nonumber\\ 
\leq 
\EK{\sum_{\mathcal{K}\in  {\mathcal{T}^{cov}}}}
\left\|\mathfrak{p}^{-2}
\EK{\theta}
{\EK{h^{-1}_{K}}}%h_{\EK{K}}
\nabla p_h \right\|^2_{\mathcal{K}}%{\mathrm{\Omega^{{\magenta{\sharp}}%cov???\sharp???
%}}}
+ 
\EK{\sum_{F \in {\mathcal{T}^\sharp\cap\mathsf{\Gamma}}}}
\left\|\mathfrak{p}^{-3}
\EK{\theta}
h^{-1/2}_{\EK{F}}\nabla p_h\right\|^2_\EK{F}%{\mathrm{\Gamma}} 
+ \sum_{F\in \mathcal{F}^\sharp_{int}} \left\|\mathfrak{p}^{-3}
\EK{\theta}
h^{-1/2}_{\EK{F}}%h_{\EK{F}}^{3/2}
[\![\nabla p_h ]\!]\right\|^2_{\EK{F}%\cap \mathrm{\Omega^\sharp}
}\nonumber
\\
%\leq 
%\widetilde{C}%\| \mathfrak{p}^{-2}h_{\EK{K}} \nabla p_h\|_{\mathrm{\Omega^{{\magenta{\sharp}}%%cov???\sharp???
%%}}}^2,
%%\nonumber\\
%\EK{\sum_{\mathcal{K}\in  {\mathcal{T}^{cov}}}}
%\left\|\mathfrak{p}^{-2}
%\EK{\theta}
%{\EK{h^{-1}_{K}}}%h_{\EK{K}}
%\nabla p_h \right\|^2_{\mathcal{K}}%{\mathrm{\Omega^{{\magenta{\sharp}}%cov???\sharp???
%%}}}
\leq
%\stackrel{\theta=h^2_K}{\leq}
 \widetilde{C}%\| \mathfrak{p}^{-2}h_{\EK{K}} \nabla p_h\|_{\mathrm{\Omega^{{\magenta{\sharp}}%cov???\sharp???
%}}}^2,
%\nonumber\\
\EK{\sum_{\mathcal{K}\in  {\mathcal{T}^{cov}}}}
\left\|\mathfrak{p}^{-2}
{\EK{h_{K}}}%h_{\EK{K}}
\nabla p_h \right\|^2_{\mathcal{K}}%{\mathrm{\Omega^{{\magenta{\sharp}}%cov???\sharp???
%}}}
\end{eqnarray} 
which has been obtained by the trace inequalities (\ref{ineq:inv_est}%{tr1a}), (\ref{tr2a}
), the inverse inequality  (\ref{estimate:nabla_v}%{deriv_estimate3}
%{ineq:inv_est}
) \EK{and finally setting $\theta=h^2_K$}. 
In particular, if we regard the norm on a facet $F\subset \partial K\in \mathcal{T}^{{\magenta{\sharp}}%cov
}%???\sharp???cov???
$, 
$$
\TODO{\left\| \sigma\mathfrak{p}^{-4}h_{\EK{F}}^{2}\nabla p_h\right\|_{\EK{F}%\cap \mathrm{\Omega^\sharp}
} \leq}
\left\|\mathfrak{p}^{-3}h_{\EK{F}}^{3/2}\nabla p_h\right\|_{\EK{F}%\cap \mathrm{\Omega^\sharp}
}
 \leq \left\|\mathfrak{p}^{-3}h_{\EK{F}}^{3/2}\nabla p_h\right\|_{\partial K} 
%\lesssim 
\le C\mathfrak{p}^{-2}h_{\EK{K}}\left\|\nabla p_h\right\|_{K}%+ \mathfrak{p}^{-4}h^{2}\left\|\Delta p_h\right\|_{K}\lesssim \left\|p_h\right\|_{K}
,
$$
by (\ref{ineq:inv_est}%{tr2a}
) and   (\ref{estimate:nabla_v}%{deriv_estimate3}
) respectively. Then, the norm corresponding to the jump on $F=K\cap K'$ satisfies
$
\left\|\mathfrak{p}^{-3}h_{\EK{F}}^{3/2}[\![\nabla p_h ]\!]\right\|_{\EK{F} %\cap \mathrm{\Omega^\sharp}
}
%\lesssim
\le C \mathfrak{p}^{-2} h_{\EK{K}}
\max\left\{\left\|[\nabla p_h\right\|_{K}, \left\|[\nabla p_h\right\|_{K'}\right\}
$ leading to the estimate
 $$
\sum_{F\in \mathcal{F}^\sharp_{int}} {\left\|\mathfrak{p}^{-3}h_{\EK{F}}^{3/2}[\![\nabla p_h ]\!]\right\|^2_{\EK{F}%\cap \mathrm{\Omega^\sharp}
%\lesssim 
}
\le
C \EK{\sum_{K\in {\mathcal T}^\sharp
%F
}}} \left\| \mathfrak{p}^{-2}h_{\EK{K}} \nabla p_h\right\|_{K}^2.
$$ Proceeding \EK{in a similar way} %analogously 
for the other terms, we obtain \eqref{aux_V}.

Finally%Overall
, we collect inequalities \eqref{partial00}--\eqref{partial22} and we \EK{choose} %take  
$(\mathbf{v}_h,q_h)=(\mathbf{u}_h, -p_h) +\zeta_1(-\mathbf{w}_h,0)+\zeta_2(\redhot{-\mathfrak{p}^{-4}h_{\EK{K}}^2} \nabla p_h,0)$. Then, \EK{we obtain} % it holds
 that  
\begin{align}
A_h%&
(\mathbf{u}_h, p_h;\mathbf{v}_h, q_h) 
\geq 
{\green{c_{a} \vertiii{\mathbf{u}_h}_{V^\EK{c}%\sharp
}^2}} \nonumber \\ 
{\green{+\zeta_1 (  -C_1\vertiii{\mathbf{u}_h}_{V^\EK{c}%\sharp
}^2
+C_2\left\|p_h\right\|^2_{\mathrm{\Omega^\sharp}}
-C_3\sum_{K\in \mathcal{T}^{{\magenta{\sharp}}%cov???\sharp???
}} \left\|\mathfrak{p}^{\magenta{-2}}%{-1}
h_K\nabla p_h\right\|^2_{{\EK{K}%\cap\mathrm{\Omega^\sharp}
}}
-C_3\sum_{F \in \mathcal{F}^\sharp_{int}}\left\|% h_F
\mathfrak{p}^{\magenta{-1}}%{-1/2}
{h_F}^{1/2} [\![  p_h ]\!]\right\|_{\EK{F}%\cap \mathrm{\Omega^\sharp}
}^2)
}}
\nonumber \\ 
%& \qquad\qquad\qquad\qquad\,\, 
{\green{\zeta_2(
-C_4\vertiii{\mathbf{u}_h}_{V^\EK{c}%\sharp
}^2 
%-C_5\left\|p_h\right\|_{\mathrm{\Omega^{cov}}}^2 
%{{\magenta{-}}}%
+ 
C_5\sum_{K\in \mathcal{T}^{{{\magenta{\sharp}}}%cov???\sharp???
}}\left\| \mathfrak{p}^{-2}h_K\nabla p_h \right\|_{\EK{K}%\cap\mathrm{\Omega^\sharp}
}^2  
{\magenta{+}}%-
C_6\sum_{F\in\mathcal{F}^\sharp_{int}}\left\|\mathfrak{p}^{-1}h_F^{1/2}[\![p_h ]\!]\right\|_{\EK{F}%\cap\mathrm{\Omega^\sharp}
}^2
) }}\nonumber \\
{\green{\geq 
(c_{a}-\zeta_1C_1-\zeta_2C_4) \vertiii{\mathbf{u}_h}_{V^\EK{c}%\sharp
}^2 
+\zeta_1C_2\left\|p_h\right\|_{\mathrm{\Omega^\sharp}}^2 
%+ j_p(p_h,p_h)-
%\nonumber \\ 
%& \qquad\qquad\qquad\qquad\,\, 
+(\zeta_2 C_5-\zeta_1 C_3)\sum_{K\in \mathcal{T}^{{\magenta{\sharp}}%cov???\sharp???
}} \left\|\mathfrak{p}^{-2}h_K\nabla p_h\right\|^2_{{\EK{K}%\cap\mathrm{\Omega^\sharp}
}}
}}
 \nonumber \\
%& \qquad\qquad\qquad\qquad\qquad\qquad 
{\green{+(\zeta_2 C_6-\zeta_1 C_3)\sum_{F\in\mathcal{F}^\sharp_{int}}\left\|\mathfrak{p}^{-1}h_F^{1/2}[\![p_h ]\!]\right\|_{\EK{F}%\cap\mathrm{\Omega^\sharp}
}^2 .
}}
%\nonumber \\
%%&
%\geq 
%{\green{(c_{a}-\zeta_1 C_1-\zeta_2 C_4) \vertiii{\mathbf{u}_h}_{V^\sharp}^2
%}} 
%+\big(C_p^{-1}\min\{\lambda_1C_2,1\}-\lambda_2 C_5\big)\left\|p_h\right\|_{\mathrm{\Omega^{{{\magenta{\sharp}}}%cov???\sharp??
%}}}^2
%\nonumber \\ 
%%& \quad\,\, 
%{%\green
%{+(\zeta_2 -\zeta_1C_3)\sum_{K\in \mathcal{T}^{{{\magenta{\sharp}}}%cov???\sharp???
%}} \left\|\mathfrak{p}^{-2}h_K\nabla p_h\right\|^2_{{ K\cap\mathrm{\Omega^\sharp}}} +(\TODO{\gamma-\lambda_1C_3-\lambda_2C_6})\sum_{F\in\mathcal{F}^\sharp_{int}}\left\|\mathfrak{p}^{-1}h_F^{1/2}[\![p_h ]\!]\right\|_{F\cap\mathrm{\Omega^\sharp}}^2
%}}
\label{partial3}
\end{align} 
{\green{We force $\zeta_1 \le C_3^{-1}\min\{ C_5,C_6\}\zeta_2$ or $\zeta_2 \ge %\frac
{C_3}{\min\{ C_5 C_6 \}^{-1} }\zeta_1$ and we choose  $\zeta_2 = %\frac
{2C_3%^{-1}
}{\min\{ C_5 C_6\}^{-1}}\zeta_1$. We finally impose $\zeta_1 \le %\min\{
 %\frac
{c_a}/({C_1 + 2C_3%^{-1}
\min\{C_5,C_6\}^{-1}})%,?,?,?\}
$}}
%\TODO{if we select $\lambda_2=2\lambda_1C_3>0$ with $\lambda_1<\min\Big\{\frac{c_a}{C_1+2C_3C_4},\frac{1}{C_2},\TODO{\frac{\gamma}{C_3(1+C_6)}
%}
%\Big\}$
%for sufficiently small $0<\lambda_1<\big(\widetilde{C}C_a\big)^{-1}\Big(\frac{C_2}{C_3C_p}-C\lambda_2\Big)$ and  $0<\lambda_2<\frac{C_2}{CC_3C_p}$, then 
{\green{and we conclude that}} %the next inequality follows}
\begin{align*}
A_h&(\mathbf{u}_h, p_h;\mathbf{v}_h, q_h) 
\le%\gtrsim 
C(\vertiii{\mathbf{u}_h}_{V^\EK{c}%\sharp
}^2 + \vertiii{p_h}_{Q^\EK{c}%\sharp
}^2)=C\vertiii{(\mathbf{u}_h,p_h)}_{V^\EK{c}%\sharp
,Q^\EK{c}%\sharp
}^2, {\text{ for } C>0}. %\TODO{\textit{na antikatastathei pantou gia na mh moiazei ?}}
\end{align*}
We now note that
\begin{align*}
{\green{\vertiii{(\mathbf{u}_h-\zeta_1 \mathbf{w}_h - \zeta_2 \mathfrak{p}^{-4}h_{\EK{K}}^2\nabla p_h,-p_h)}_{V^\EK{c}%\sharp
,Q^\EK{c}%\sharp
}^2 %&
 = 
\vertiii{\mathbf{u}_h - \zeta_1 \mathbf{w}_h - \zeta_2 \mathfrak{p}^{-4}h_{\EK{K}}^2\nabla p_h}_{V^\EK{c}%\sharp
}^2 +\vertiii{p_h}_{Q^\EK{c}%\sharp
}^2 
}}
\\%&
 {\green{ \leq \vertiii{\mathbf{u}_h}_{V^\EK{c}%\sharp
}^2 + \zeta_1 \vertiii{\mathbf{w}_h}_{V^\EK{c}%\sharp
}^2+\zeta_2\vertiii{\mathfrak{p}^{-4}h_{\EK{K}}^2\nabla p_h}_{V^\EK{c}%\sharp
}^2 +\vertiii{p_h}_{Q^\EK{c}%\sharp
}^2  
}}
\\%\\& 
 {\green{\leq \vertiii{\mathbf{u}_h}_{V^\EK{c}%\sharp
}^2 +(\zeta_1 + % C %\frac
{2CC_3%^{-1}
}{\min\{ C_5, C_6\}}^{-1}\zeta_1+1) \vertiii{p_h}_{Q^\EK{c}%\sharp
}^2  
}}
\\%\\& 
 {\green{\leq \big((\zeta_1(1 + %C %\frac
{2CC_3%^{-1}
}{{\min}\{ C_5, C_6\}}^{-1})+1\big) \vertiii{(\mathbf{u}_h,p_h)}_{V^\EK{c}%\sharp
,Q^\EK{c}%\sharp
}^2,
}}
\end{align*}
and the proof of (\ref{surr}) follows, 
%$
%A_h(\mathbf{u}_h, p_h; \mathbf{v}_h,q_h)\geq c_{bil}\vertiii{(\mathbf{u}_h,p_h)}_{V^\sharp,Q^\sharp} \vertiii{(\mathbf{v}_h,q_h)}_{V^\sharp,Q^\sharp},
%$
where $c_{bil}=\frac{\min \{{\green{c_{a}-\zeta_1(C_1+\TODO{{2C_3%^{-1}
}{\min\{ C_5 C_6\}}^{-1})}C_4, \zeta_1C_2 \}}}}{
 {\green{\zeta_1(1 + C {2C_3%^{-1}
}{\min\{ C_5, C_6\}}^{-1})+1}}
 }$.%, with positive constants $\widehat{C}_1=c_a-\lambda_1(C_1+C_3C_4)$ and $\widehat{C}_2=\lambda_1(C_p^{-1}C_2-2 C_3C_5)$.
\end{proof}
%\TODO{
%\begin{rem}\label{rem1}
%The case of $P_1 - P_1$ elements leads to significant simplifications in the previous proof. Indeed, as noted earlier for $k=1$, i.e.,  piece-wise linear $\mathbf{u}_h$, the term $\Delta \mathbf{u}_h$ in the formulation of $c(\mathbf{u}_h, p_h; q_h)$ vanishes and the corresponding form is simply a function   $c(\mathbf{u}_h, p_h; q_h)=c(p_h, q_h)$. Repeating the procedure in the proof of Theorem \ref{infs} reveals that the relevant constraint on $\gamma_0<\frac{4c_a}{C_1^2}$ is unnecessary. This theoretical finding is also supported by the numerical evidence in Figures \ref{sensitivity_V2_new}, \ref{sensitivity_V2_P2}, \ref{sensitivity_V2_P3} of Section \ref{section6}. Indeed, Figure \ref{sensitivity_V2_new} for $P_1 - P_1$ elements verifies remarkable error robustness for a wide range of parameter values of $\gamma_0$. On the other hand,  Figures  \ref{sensitivity_V2_P2}, \ref{sensitivity_V2_P3} for $P_2 - P_2$ and $P_3 - P_3$ elements respectively reveal a steep deterioration in the method's approximation properties for large values of $\gamma_0$. Hence, the constraint implied by the proof of Theorem \ref{infs} that $\gamma_0$ should be sufficiently small is necessary for the stability of the method for the higher--order case ($k>1$).
%Hence, the constraint on fine tuning of stabilization parameters $\beta$, as well as $\gamma_0$, is required for the higher--order case.
%\end{rem}
% }
 
\section{Error estimates} \label{section5}
 \TODO{
%Similar to the classical literature 
\magentaX{Estimating the inconsistency of the bilinear form by the best approximation estimates, we have the following approximate Galerkin orthogonality.}
% \redX{Next, we introduce  the asymptotic Galerkin orthogonality and consistency of the variational formulation (\ref{cutdg}).} %of the total form $A_h+J_h$. 
To obtain error estimates, in this section we will assume \EK{that} %some extra regularity for 
the exact solution $(\mathbf{u}, p) \in \left[H^1_0(\Omega^\sharp)\right]^d \times L_0^2(\Omega^\sharp)$.
%To plug in the exact solution  $(\mathbf{u}, p) \in \left[H^1_0(\Omega)\right]^d ?L_0^2(\Omega)$ into the discrete bilinear form $A_h+J_h$, in the following Lemma  we assume some additional regularity of the pair  $(\mathbf{u}, p) $.

\begin{lem}%[Galerkin orthogonality]
\label{Galerkin orthogonality}
Let $(\mathbf{u}, p) \in \left[H^2(\Omega^\sharp)\cap H^1_0(\Omega^\sharp)\right]^d  \times \left[H^1(\Omega^\sharp)\cap L_0^2(\Omega^\sharp)\right]$ be the solution to the Stokes problem \normalfont (\ref{The Stokes Problem}) \itshape and $(\mathbf{u}_h, p_h) \in V_h^\sharp \times Q_h^\sharp$ the finite element approximation in  \normalfont  (\ref{cutdg})\EK{, with $h=\max_{\mathcal{K}\in \mathcal{T}^{cov}}{h_\mathcal{K}}
$ due to shape regularity of $\mathcal{T}^{cov}$, see also  Assumption \ref{ass:max_card}}.  
\itshape Then, 
\begin{equation}\label{galerkin}
A_h(\mathbf{u}-\mathbf{u}_h, p - p_h; \mathbf{v}_h, q_h) = \mathcal{O}(%\frac
{h^{\mathfrak{p}}}
/
{\mathfrak{p}^{\mathfrak{p}-\frac{1}{2}}})%0
 \quad \text{for every} \quad  (\mathbf{v}_h, q_h) \in V_h ^\sharp\times Q_h^\sharp.
  \end{equation} 
\end{lem}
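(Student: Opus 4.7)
The plan is to exploit the partial consistency of the method: if we plug the exact solution into $A_h$, the only terms that break the usual Galerkin orthogonality are those produced by the $L^2$–projection $\mathbf{\Pi}_{L^2}$ acting on $\nabla\mathbf{u}$. Since $\mathbf{u}\in [H^2\cap H_0^1]^d$ and $p\in H^1\cap L_0^2$ we have $[\![\mathbf{u}]\!]=0$ and $[\![\nabla\mathbf{u}]\!]=0$ on interior facets, $\mathbf{u}|_\Gamma=0$, $\llrrparen{p}=p$, and $\nabla\cdot\mathbf{u}=0$. Using these facts in the definitions of $\widetilde a_h$ and $b_h$, element-wise integration by parts of $\int_{\Omega^\sharp}\nabla\mathbf{u}{:}\nabla\mathbf{v}_h$, and the identity $\llrrparen{\nabla\mathbf{u}}=\nabla\mathbf{u}$ a.e.\ on interior facets, I get
\begin{align*}
\widetilde a_h(\mathbf{u},\mathbf{v}_h)=-\!\int_{\Omega^\sharp}\!\Delta\mathbf{u}\cdot\mathbf{v}_h\,d\bm{x}
+\!\!\sum_{F\in\mathcal{F}^\sharp_{int}}\!\!\int_F \llrrparen{(I-\mathbf{\Pi}_{L^2})\nabla\mathbf{u}}\cdot\mathbf{n}_F[\![\mathbf{v}_h]\!]\,d\gamma
+\!\int_\Gamma \mathbf{v}_h\,(I-\mathbf{\Pi}_{L^2})\nabla\mathbf{u}\cdot\mathbf{n}_\Gamma\,d\gamma,
\end{align*}
while $b_h(\mathbf{u},q_h)=0$ and $b_h(\mathbf{v}_h,p)=\int_{\Omega^\sharp}\mathbf{v}_h\cdot\nabla p\,d\bm{x}$ (no projection acts on $p$). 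Together with $-\Delta\mathbf{u}+\nabla p=\mathbf{f}$ this yields $A_h(\mathbf{u},p;\mathbf{v}_h,q_h)=L_h(\mathbf{v}_h,q_h)+\mathcal{R}_h(\mathbf{u};\mathbf{v}_h)$, where $\mathcal{R}_h$ collects the two projection residuals.

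Subtracting the discrete equation \eqref{cutdg} gives
\[
A_h(\mathbf{u}-\mathbf{u}_h,p-p_h;\mathbf{v}_h,q_h)=\mathcal{R}_h(\mathbf{u};\mathbf{v}_h),
\]
so the task reduces to bounding $\mathcal{R}_h$. I apply Cauchy--Schwarz in the $hp$–weighted pairing: for the interior term,
\[
\Bigl|\sum_{F\in\mathcal{F}^\sharp_{int}}\!\!\int_F \llrrparen{(I-\mathbf{\Pi}_{L^2})\nabla\mathbf{u}}\cdot\mathbf{n}_F[\![\mathbf{v}_h]\!]\Bigr|
\le \Bigl(\sum_F\|\mathfrak{p}^{-1}h_F^{1/2}\llrrparen{(I-\mathbf{\Pi}_{L^2})\nabla\mathbf{u}}\|_F^2\Bigr)^{1/2}\Bigl(\sum_F\|\mathfrak{p}\,h_F^{-1/2}[\![\mathbf{v}_h]\!]\|_F^2\Bigr)^{1/2},
\]
and analogously for the boundary integral (pairing against $\|\sigma^{1/2}\mathbf{v}_h\|_\Gamma$). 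The second factor in each pairing is controlled by $\vertiii{\mathbf{v}_h}_{V^c}$ thanks to the choice $\sigma\sim\mathfrak{p}^2 h^{-1}$. The first factor is the projection residual on facets; applying \eqref{ineq:proj_Fi_to_H_elk} of Lemma \ref{lem:hp_projection_error_bounds} to (each component of) $\nabla\mathbf{u}\in[H^{\mathfrak{p}}(\mathcal{K})]^d$ and the $L^2$–stability plus quasi-optimality of $\mathbf{\Pi}_{L^2}$, it is bounded by $C\,(h^{\mathfrak{p}}/\mathfrak{p}^{\mathfrak{p}-1/2})\|\mathbf{u}\|_{\mathfrak{p}+1,\Omega^\sharp}$.

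Combining these two estimates delivers $|\mathcal{R}_h(\mathbf{u};\mathbf{v}_h)|\le C\,(h^{\mathfrak{p}}/\mathfrak{p}^{\mathfrak{p}-1/2})\|\mathbf{u}\|_{\mathfrak{p}+1,\Omega^\sharp}\,\vertiii{(\mathbf{v}_h,q_h)}_{V^c,Q^c}$, which is precisely the order claimed in \eqref{galerkin}. The main obstacle I anticipate is controlling the projection error at facet traces when facets may degenerate: the bound on $\|(I-\mathbf{\Pi}_{L^2})\nabla\mathbf{u}\|_F$ is not immediate for arbitrarily shaped boundary elements and requires the $hp$-approximation estimate \eqref{ineq:proj_Fi_to_H_elk}, whose constant $\mathcal{C}_{\text{ap}}(\mathfrak{p},K,F_i)$ stays bounded precisely under Assumptions \ref{ass:basic_geometry}--\ref{ass:max_card}; a secondary subtlety is to ensure the constant does not pick up an extra power of $\mathfrak{p}$ when one uses $L^2$-stability of $\mathbf{\Pi}_{L^2}$ together with an inverse trace inequality to handle $\llrrparen{\mathbf{\Pi}_{L^2}\nabla\mathbf{u}}-\llrrparen{\nabla\mathbf{u}}$ on anisotropic sub-elements $K_{F_i}$.
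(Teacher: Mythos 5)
Your proposal is correct and follows essentially the same route as the paper, which does not prove the lemma in isolation but absorbs it into the proof of Theorem \ref{order_revised} via the residual $R_h(\mathbf{v}_h):=\widetilde a_h(\mathbf{u},\mathbf{v}_h)-a_h(\mathbf{u},\mathbf{v}_h)$, handled exactly as you do: the inconsistency reduces to $(I-\mathbf{\Pi}_{L^2})\nabla\mathbf{u}$ on facets, one inserts $\bm{\pi}_{\mathfrak p}\nabla\mathbf{u}$, uses $\mathbf{\Pi}_{L^2}\bm{\pi}_{\mathfrak p}=\bm{\pi}_{\mathfrak p}$ and the $L^2$-stability of $\mathbf{\Pi}_{L^2}$, and concludes with the $hp$-weighted Cauchy--Schwarz pairing against $\vertiii{\mathbf{v}_h}_{V^c}$ and the facet approximation bound \eqref{ineq:proj_Fi_to_H_elk}. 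The only caveat, present in the paper as well, is that the stated hypothesis $(\mathbf{u},p)\in[H^2]^d\times H^1$ does not support the rate $h^{\mathfrak p}/\mathfrak p^{\mathfrak p-1/2}$ for $\mathfrak p>1$; your bound tacitly assumes $\mathbf{u}\in[H^{\mathfrak p+1}(\Omega^\sharp)]^d$, which is the regularity actually used in Theorem \ref{order_revised}.
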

%\begin{proof}
%
%The proof is standard after recalling the definitions %of $A_h$ and $L_h$ 
%from (\ref{A}) and using the fact that the exact solution $(\mathbf{u}, p)$ satisfies the continuity conditions $[\![\mathbf{u}]\!]=[\![  \nabla \mathbf{u}  ]\!] \cdot \mathbf{n}_F=[\![p]\!]=0$. % on all interfaces $F \in \mathcal{F}_h^{int}$. 
%%, we infer using \eqref{a_alt}, \eqref{b_alt}%then it is immediate that  the pair $(\mathbf{u}, p)$ satisfies % the equation $A_h(\mathbf{u}, p ; \mathbf{v}_h, q_h)=L_h(\mathbf{v}_h, q_h)$, for every $(\mathbf{v}_h, q_h) \in V_h?Q_h$. Indeed, using the fact that the exact solution $(\mathbf{u}, p)$ satisifes $[\![  \nabla \mathbf{u}  ]\!] \cdot \mathbf{n}_F=[\![p]\!]=0$ on all interfaces $F \in \mathcal{F}_h^{int}$, then
%%\begin{align*}
%%A_h(\mathbf{u}, p ; \mathbf{v}_h, 0) &= \widetilde{a}_h(\mathbf{u}, \mathbf{v}_h) + b_h(\mathbf{v}_h,p) = -\int_{\Omega^\sharp}\Delta \mathbf{u} \cdot \mathbf{v}_h+\int_{\Omega^\sharp}\mathbf{v}_h\cdot \nabla p \\
%%A_h(\mathbf{u}, p ; 0, q_h)&=b_h(\mathbf{u}, q_h) = \int_{ \Omega^\sharp}  (\mathbf{u}+\nabla p) \cdot  \nabla q_h + \sum_{T\in \mathcal{T}^{cov}}\int_{ \Omega^\sharp}  (\mathbf{u}+\nabla p) \cdot  \nabla q_h, \TODO{\textit{doublecheck the weak form}}
%%\end{align*}
%%whereby $A_h(\mathbf{u}, p ; \mathbf{v}_h, q_h)=\int_{\Omega}\mathbf{f}\cdot \mathbf{v}_h=L_h(\mathbf{v}_h, q_h)$  for every $(\mathbf{v}_h, q_h) \in V_h^{\sharp}\times Q_h ^{\sharp}$ and the result follows.
%\end{proof}
}
Next, we prove the a priori error estimate and optimal convergence rates for the velocity and pressure.
\begin{thm}%[A--priori error estimate]
\label{order_revised}
Let $(\mathbf{u}, p) \in \left[H^{\mathfrak{p}+1}(\Omega^\sharp)\cap H^1_0(\Omega^\sharp)\right]^d \times \left[H^\mathfrak{p}(\Omega^\sharp)\cap L_0^2(\Omega^\sharp)\right]$ be the solution to the Stokes problem \normalfont \eqref{The Stokes Problem} \itshape and $(\mathbf{u}_h, p_h) \in V_h^\sharp \times Q_h^\sharp$ the finite element approximation \EK{in agreement with} %according to 
\normalfont \eqref{cutdg}. \itshape Then, there exists a constant $C>0$, such that
\begin{equation}\label{apriori_revised}
\vertiii{(\mathbf{u}-\mathbf{u}_h, p-p_h)}
\leq C \EK{\sum_{K\in\mathcal{T}^\sharp}\frac{h_K^{\mathfrak{p}}}{\mathfrak{p}^{\mathfrak{p}-\frac{1}{2}}}
\Big(
||\mathbf{u}||_{\mathfrak{p}+1, K%\Omega^\sharp
} + 
\frac{1}{\mathfrak{p}^\frac{1}{2}}||p||_{\mathfrak{p},K%\Omega^\sharp
}\Big)
}
%
%\redhot{\leq C \EK{\sum}\frac{h^{\mathfrak{p}}}{\mathfrak{p}^{\mathfrak{p}-\frac{1}{2}}}(||\mathbf{u}||_{\mathfrak{p}+1, \Omega^\sharp} + 
%\frac{1}{\mathfrak{p}^\frac{1}{2}}||p||_{\mathfrak{p},\Omega^\sharp})}
.
\end{equation}
\end{thm}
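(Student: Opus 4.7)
The strategy is a standard Strang-type decomposition combined with the discrete inf--sup stability of Theorem \ref{infs}, the approximate Galerkin orthogonality of Lemma \ref{Galerkin orthogonality}, the continuity bounds \eqref{cont3_revised}--\eqref{cont4_revised} of Lemma \ref{a_bound_revised} and the projection error estimates of Corollary \ref{discrete_error_revised}. First I would split the error through the extended projectors $\bm{\pi}_\mathfrak{p}$ and $\pi_\mathfrak{p}$ from Lemma \ref{lem:hp_projection_error_bounds}:
\[
(\mathbf{u}-\mathbf{u}_h,\, p-p_h) \;=\; (\mathbf{u}-\bm{\pi}_\mathfrak{p}\mathbf{u},\,p-\pi_\mathfrak{p} p) \;+\; (\bm{\pi}_\mathfrak{p}\mathbf{u}-\mathbf{u}_h,\,\pi_\mathfrak{p} p - p_h) \;=:\; \eta + \xi_h,
\]
so that $\xi_h \in V_h^\sharp \times Q_h^\sharp$ and, by the triangle inequality, $\vertiii{(\mathbf{u}-\mathbf{u}_h,p-p_h)} \le \vertiii{\eta} + \vertiii{\xi_h}$.

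The approximation term $\vertiii{\eta}$ is bounded directly by Corollary \ref{discrete_error_revised} and already delivers the right-hand side of \eqref{apriori_revised}. The discrete part $\xi_h$ is the main task. I would apply Theorem \ref{infs} to $\xi_h$ and rewrite the residual using linearity of $A_h$:
\[
c_{bil}\,\vertiii{\xi_h}_{V^c,Q^c} \;\le\; \sup_{(\mathbf{v}_h,q_h)}\frac{A_h(\xi_h;\mathbf{v}_h,q_h)}{\vertiii{(\mathbf{v}_h,q_h)}_{V^c,Q^c}} \;=\; \sup_{(\mathbf{v}_h,q_h)}\frac{A_h(\mathbf{u}-\mathbf{u}_h, p-p_h;\mathbf{v}_h,q_h)-A_h(\eta;\mathbf{v}_h,q_h)}{\vertiii{(\mathbf{v}_h,q_h)}_{V^c,Q^c}}.
\]
The first numerator is handled by Lemma \ref{Galerkin orthogonality}, which gives $A_h(\mathbf{u}-\mathbf{u}_h,p-p_h;\mathbf{v}_h,q_h)=\mathcal{O}(h^{\mathfrak{p}}/\mathfrak{p}^{\mathfrak{p}-1/2})\cdot\vertiii{(\mathbf{v}_h,q_h)}$ up to the same regularity factors appearing on the right of \eqref{apriori_revised}. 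The second numerator $A_h(\eta;\mathbf{v}_h,q_h)=\widetilde{a}_h(\mathbf{u}-\bm{\pi}_\mathfrak{p}\mathbf{u},\mathbf{v}_h)+b_h(\mathbf{u}-\bm{\pi}_\mathfrak{p}\mathbf{u},q_h)+b_h(\mathbf{v}_h,p-\pi_\mathfrak{p} p)$ is controlled via the continuity estimates \eqref{cont1_revised}--\eqref{cont4_revised}, bounded by $C\vertiii{\eta}\,\vertiii{(\mathbf{v}_h,q_h)}$, and then by Corollary \ref{discrete_error_revised} applied to $\vertiii{\eta}$.

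To finish, I would use the norm comparisons \eqref{norm_est1}, namely $\vertiii{(\mathbf{v}_h,q_h)}\le C\,\vertiii{(\mathbf{v}_h,q_h)}_{V^c,Q^c}$, to absorb the mismatch between the test norm that appears from continuity and the denominator in the inf--sup quotient; this yields a bound of the form $\vertiii{\xi_h}_{V^c,Q^c}\le C\,\vertiii{\eta}$ plus the Galerkin orthogonality remainder, both of optimal order. Then the reverse direction of \eqref{norm_est1}, which is a consequence of Lemma \ref{ext} applied to the discrete quantity $\xi_h$, transfers the bound from $\vertiii{\cdot}_{V^c,Q^c}$ to $\vertiii{\cdot}$. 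Combining with the estimate for $\vertiii{\eta}$ concludes \eqref{apriori_revised}.

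The main obstacle I anticipate is tracking the $\mathfrak{p}$-powers consistently when passing between the Stokes-like norm $\vertiii{\cdot}$ appearing in the continuity estimates, the simplified stability norm $\vertiii{\cdot}_{V^c,Q^c}$ used in the inf--sup result, and the covering-mesh norms built from $\mathcal{T}^{cov}$ in which the approximation theory of Corollary \ref{discrete_error_revised} is formulated. In particular, the mismatch between the $\mathfrak{p}^{-1}h_F^{1/2}$ weights appearing on interior faces and the $\sigma^{1/2}=\mathfrak{p}\,h_K^{-1/2}$ weights used in $\widetilde a_h$ must be reconciled using the trace/inverse estimates \eqref{ineq:inv_est}, \eqref{eq:Fi_to_K}, \eqref{estimate:nabla_v}, \eqref{estimate:normal_der_v} on the polytopic boundary elements satisfying Assumption \ref{ass:basic_geometry}, so that no extra factor of $\mathfrak{p}$ is lost in the final bound.
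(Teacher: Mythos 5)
Your proposal follows essentially the same route as the paper's proof: the same splitting through $\bm{\pi}_\mathfrak{p}$ and $\pi_\mathfrak{p}$, Corollary \ref{discrete_error_revised} for the projection part, Theorem \ref{infs} combined with Lemma \ref{Galerkin orthogonality} and the continuity bounds \eqref{cont1_revised}--\eqref{cont4_revised} for the discrete part, and the norm comparison \eqref{norm_est1} to pass between $\vertiii{\cdot}$ and $\vertiii{\cdot}_{V^c,Q^c}$. The only cosmetic difference is that the paper additionally spells out, at the end, that the inconsistency residual $R_h(\mathbf{v}_h)=\tilde a_h(\mathbf{u},\mathbf{v}_h)-a_h(\mathbf{u},\mathbf{v}_h)$ introduced by the $\mathbf{\Pi}_{L^2}$ projection is itself of optimal order, a point you fold implicitly into the $\mathcal{O}(h^{\mathfrak{p}}/\mathfrak{p}^{\mathfrak{p}-1/2})$ remainder of Lemma \ref{Galerkin orthogonality}.
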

\begin{proof}
We start by  rearranging the  $(\mathbf{u}-\mathbf{u}_h, p-p_h)$ error adding and subtracting appropriate terms, and we conclude to its discrete  and projection error components:
$$
\vertiii{(\mathbf{u}-\mathbf{u}_h, p-p_h)}\leq \vertiii{(\mathbf{u}-\bm{\pi}_\mathfrak{p}%\Pi_h
\mathbf{u}, p - {\pi}_\mathfrak{p}%\Pi_h 
p)}+\vertiii{(\bm{\pi}_\mathfrak{p}%\Pi_h
\mathbf{u}-\mathbf{u}_h, {\pi}_\mathfrak{p}%\Pi_h
p-p_h)}.
$$
The first term optimal estimate is provided by (\ref{approx_er1_revised}) and (\ref{approx_er2_revised}) as it is proven in Corollary \ref{discrete_error_revised}. The second term first is  bounded by
$$
\vertiii{(\bm{\pi}_\mathfrak{p}%\Pi_h
\mathbf{u}-\mathbf{u}_h, {\pi}_\mathfrak{p}%\Pi_h
p-p_h)}\leq C\vertiii{(\bm{\pi}_\mathfrak{p}%\Pi_h
\mathbf{u}-\mathbf{u}_h, {\pi}_\mathfrak{p}%\Pi_h
p-p_h)}_{V^\EK{c}%\sharp
,Q^\EK{c}%\sharp
},
 $$
\EK{according to} (\ref{norm_est1}). Due to % \TODO{Galerkin orthogonality}
 (\ref{galerkin}) from Lemma \ref{Galerkin orthogonality} and Theorem, \ref{infs} there exists a pair $(\mathbf{v}_h, q_h) \in V_h^\sharp\times Q_h^\sharp$ with $\left\|(\mathbf{v}_h, q_h)\right\|_{V^\EK{c}%^\sharp
,Q^\EK{c}%^\sharp
}=1$, such that 
 \begin{align*}
  c_{bil}\vertiii{\left(\bm{\pi}_\mathfrak{p}%\Pi_h
  \mathbf{u}-\mathbf{u}_h, {\pi}_\mathfrak{p}%\Pi_h 
  p - p_h\right)}_{V^\EK{c}%^\sharp
,Q^\EK{c}%^\sharp
}  \leq A_h(\bm{\pi}_\mathfrak{p}%\Pi_h
\mathbf{u}-\mathbf{u}_h, {\pi}_\mathfrak{p}%\Pi_h 
p - p_h; \mathbf{v}_h, q_h)  
  = A_h(\bm{\pi}_\mathfrak{p}%\Pi_h
  \mathbf{u}-\mathbf{u}, {\pi}_\mathfrak{p}%\Pi_h 
  p - p; \mathbf{v}_h, q_h).
 \end{align*}
% The asserted estimate for the second term follows by Lemma \ref{consistency_revised}, since the pair $(\mathbf{v}_h, q_h) \in V_h^\sharp\times Q_h^\sharp$ has unit $\vertiii{(\cdot, \cdot)}_{V^\sharp,Q^\sharp}$--norm. 
Hence, we %restrict our attention to the remaining term and
 use the definition of the corresponding form $A_h$ to take
 \begin{align}
A_h(\bm{\pi}_\mathfrak{p}%\Pi_h
\mathbf{u}-\mathbf{u}, {\pi}_\mathfrak{p}%\Pi_h
 p - p; \mathbf{v}_h, q_h)=\widetilde{a}_h(\bm{\pi}_\mathfrak{p}%\Pi_h
 \mathbf{u}-\mathbf{u}, \mathbf{v}_h) +b_h(\mathbf{v}_h, {\pi}_\mathfrak{p}%\Pi_h
  p - p) + b_h(\bm{\pi}_\mathfrak{p}%\Pi_h
  \mathbf{u}-\mathbf{u},  q_h). %+ \notag \\
%&\quad + c_h( \mathbf{u}-\Pi_h\mathbf{u}, p- \Pi_h p; q_h). 
\label{last}
\end{align}
\EK{Next}, recalling that the pair $\left(\mathbf{v}_h,q_h\right)$ has unit $\vertiii{\cdot}_{V^\EK{c}%^\sharp
,Q^\EK{c}%^\sharp
}$ norm, we derive
$$
A_h(\bm{\pi}_\mathfrak{p}%\Pi_h
\mathbf{u}-\mathbf{u}, {\pi}_\mathfrak{p}%\Pi_h
 p - p; \mathbf{v}_h, q_h)
 \le C_a \vertiii{\bm{\pi}_\mathfrak{p}%\Pi_h
 \mathbf{u}-\mathbf{u}}\cdot \vertiii{v_h} + C_b \vertiii{v_h}\cdot \vertiii{{\pi}_\mathfrak{p}%\Pi_h 
 p - p_h} + C_b \vertiii{\bm{\pi}_\mathfrak{p}%\Pi_h
 \mathbf{u}-\mathbf{u}}\cdot \vertiii{q_h},
$$
and after employing the continuity of $\widetilde{a}_h$ and $b_h$ in (\ref{cont1_revised})--(\ref{cont4_revised}) and Corollary \ref{discrete_error_revised},  also bounds  the remaining terms. At this end, an estimate for (\ref{last}) follows as
\begin{eqnarray*}
A_h(\bm{\pi}_\mathfrak{p}%\Pi_h
\mathbf{u}-\mathbf{u}, {\pi}_\mathfrak{p}%\Pi_h
 p - p; \mathbf{v}_h, q_h)
\EK{\le  C\sum_{K\in\mathcal{T}^\sharp}\frac{h_K^{\mathfrak{p}}}{\mathfrak{p}^{\mathfrak{p}-\frac{1}{2}}} \left\{\max{\{\vertiii{v_h}\cdot\vertiii{q_h}}\}\left(3 ||\mathbf{u}||_{\mathfrak{p}+1, K%\Omega^\sharp
} + \frac{1}{\mathfrak{p}^\frac{1}{2}}||p||_{\mathfrak{p}, K%\Omega^\sharp
} \right)\right\}}
\\
%\redhot{\le  C\frac{h^{\mathfrak{p}}}{\mathfrak{p}^{\mathfrak{p}-\frac{1}{2}}} \left\{\max{\{\vertiii{v_h}\cdot\vertiii{q_h}}\}\left(3 ||\mathbf{u}||_{\mathfrak{p}+1, \Omega^\sharp} + \frac{1}{\mathfrak{p}^\frac{1}{2}}||p||_{\mathfrak{p},\Omega^\sharp} \right)\right\}}
%\\
\le  \EK{C\sum_{K\in{\mathcal{T}^\sharp}}\frac{h_K^{\mathfrak{p}}}{\mathfrak{p}^{\mathfrak{p}-\frac{1}{2}}}\Big(||\mathbf{u}||_{\mathfrak{p}+1, K%\Omega^\sharp
} + \frac{1}{\mathfrak{p}^{\frac{1}{2}}}||p||_{\mathfrak{p},K%\Omega^\sharp
}\Big),}
%\redhot{\le  C\frac{h^{\mathfrak{p}}}{\mathfrak{p}^{\mathfrak{p}-\frac{1}{2}}}(||\mathbf{u}||_{\mathfrak{p}+1, \Omega^\sharp} + \frac{1}{\mathfrak{p}^{\frac{1}{2}}}||p||_{\mathfrak{p},\Omega^\sharp})}
\end{eqnarray*}
and the estimate \eqref{apriori_revised} follows.
%++++++++++++++++++++++++++++++++++++++++++++++
{\blueX{%For the sake of completeness and i
It is also necessary to %have to % In a similar way, again employing the tools of Collorary \ref{discrete_error_revised} we 
check that the extended bilinear form $\tilde{a}_h (\cdot, \cdot)$ and the related inconsistency error bound does not affect the aforementioned  optimal error bound, see also \cite[Section 4.2]{Georgoulis17}.  This can easily be verified by employing again the tools of Corollary \ref{discrete_error_revised}. % can easily verify
 In particular, we  consider the residual $ R_h (\bm{v_h} ) := \tilde{a}_h (\bm{u}, \bm{v_h} ) - {a_h(}\bm{u}, \bm{v_h} ) $, we substitute the bilinear forms with their components as in Subsection \ref{22}, we recall that $\bm{u}$ belongs in $V^\sharp$, e.g. $[\![ {\mathbf{u}} ]\!] $ is zero, on the boundary $\bm{u}|_\Gamma = 0$, and the residual becomes  
$$%R_h (\bm{v_h} ) =  
-\sum_{F \in \mathcal{F}^\sharp_{int} \cup {\mathsf{\Gamma}}} \int_{F} \left( \llrrparen{{{\mathbf{\Pi_{L^2}}(}}\nabla \mathbf{u}){ - \nabla \mathbf{u}}
}\cdot \bf{n_F} [\![ {\mathbf{v}}_h  ]\!] \right )\,d\gamma 
-
 \int_{\Gamma}\left (\mathbf{v}_h {{\mathbf{\Pi_{L^2}}(}}\nabla \mathbf{u}) { - \nabla \mathbf{u}}\big )  \cdot \mathbf{n}_{\Gamma} \right ) \,d\gamma  ,
$$
or
$$
 \left | R_h (\bm{v_h} )\right | \le \sum_{{F \in \mathcal{F}^\sharp_{int} \cup {\mathsf{\Gamma}}}}
\int_{F} \left| \llrrparen{{{\mathbf{\Pi_{L^2}}(}}\nabla \mathbf{u})
{ - \nabla \mathbf{u}}}\cdot \bf{n_F} [\![ {\mathbf{v}}_h  ]\!] \right |\,d\gamma + \int_{\Gamma}\left |\mathbf{v}_h {{\mathbf{\Pi_{L^2}}(}}\nabla \mathbf{u}){ - \nabla \mathbf{u}}\big ) \cdot \mathbf{n}_{\Gamma} \right | \,d\gamma . 
$$
%
%\begin{align*}
 %\left | R_h (\bm{v_h} )\right | := \left|\tilde{a}_h (\bm{u}, \bm{v_h} ) - {a_h(}\bm{u}, \bm{v_h} )\right|
% &=    \sum_{{F \in \mathcal{F}^\sharp_{int} \cup {\mathsf{\Gamma}}}}
%\int_{F} \left| \llrrparen{{{\mathbf{\Pi_{L^2}}(}}\nabla \mathbf{u}%_h
%)
%{ - \nabla \mathbf{u}%_h
%}
%}\cdot \bf{n_F} [\![ {\mathbf{v}}_h  ]\!] %+ \llrrparen{ {\magenta{\Pi_{L^2}(}}\nabla \mathbf{v}_h)
%%\redX{ - \nabla \mathbf{v}_h}
%%}\cdot \bf{n_F} XXXX[\![ \mathbf{u}%_h  
%%]\!] 
%\right |\,d\gamma 
%\\
%&\quad  + \int_{\Gamma}\left |\mathbf{v}_h  
%%\big ( 
%{{\mathbf{\Pi_{L^2}}(}}\nabla \mathbf{u}%_h
%)
%{ - \nabla \mathbf{u}%_h
%}
%\big )
 %\cdot \mathbf{n}_{\Gamma} \right | \,d\gamma  
 % %++++++++++++++++++++++++++++++++++++++++++++++++++++++++++++++++++++++++++
%%
%\end{align*}
%+++++++++++++++++++++++++++++++++++++++++++++++++++++++++++++++++++++++++++++
Next, we add and subtract the term $\bm{\pi}_\mathfrak{p}  \nabla \mathbf{u}$,  and after  algebraic calculations we derive that 
\begin{align*}
\left | R_h (\bm{v_h} )\right | &\le \sum_{F \in \mathcal{F}^\sharp_{int} \cup {\mathsf{\Gamma}}}\int_{F}\left |\left(\llrrparen{\nabla \mathbf{u}- \bm{\pi}_\mathfrak{p}  \nabla \mathbf{u}}\cdot \bf{n_F} [\![ {\mathbf{v}}_h  ]\!]\right |
 + \left|\llrrparen{{\mathbf{\Pi_{L^2}}(}\nabla {\mathbf{u}}) - {{\bm{\pi}}_\mathfrak{p}}  \nabla \mathbf{u}}\cdot \bf{n_F} [\![ {\mathbf{v}}_h  ]\!]\right| \right)\,d\gamma \\
&\quad  +\int_{\Gamma}\mathbf{v}_h\big (\left| \nabla \mathbf{u}- \bm{\pi}_\mathfrak{p}  \nabla \mathbf{u} \right |
+\left|{\mathbf{\Pi_{L^2}}(}\nabla \mathbf{u}) - \bm{\pi}_\mathfrak{p}  \nabla \mathbf{u}\right|\big )
 \cdot \mathbf{n}_{\Gamma}\,d\gamma 
 \\
 &\le   \sum_{F \in \mathcal{F}^\sharp_{int} \cup {\mathsf{\Gamma}}}\int_{F}\left |\left(\llrrparen{\nabla \mathbf{u} - \bm{\pi}_\mathfrak{p}  \nabla \mathbf{u}}\cdot \bf{n_F} [\![ {\mathbf{v}}_h  ]\!]\right |
 + \left|\llrrparen{{\mathbf{\Pi_{L^2}}(}\nabla {\mathbf{u}} - {{\bm{\pi}}_\mathfrak{p}}  \nabla \mathbf{u})
}\cdot \bf{n_F} [\![ {\mathbf{v}}_h  ]\!]\right| \right)\,d\gamma \\
&\quad 
+ \int_{\Gamma}\mathbf{v}_h  
\big (
\left| \nabla \mathbf{u}%_h
- \bm{\pi}_\mathfrak{p}  \nabla \mathbf{u} 
\right |
+\left|{\mathbf{\Pi_{L^2}}       (
%(
}\nabla \mathbf{u}%_h
%)
 - \bm{\pi}_\mathfrak{p}  \nabla \mathbf{u}       )
\right|
\big )
 \cdot \mathbf{n}_{\Gamma}\,d\gamma .
 \end{align*}
  Finally, for the last inequality terms we use the fact that $\mathbf{\Pi_{L^2}}{{\bm{\pi}}_\mathfrak{p}}\mathbf{v}$ coincides with ${{\bm{\pi}}_\mathfrak{p}}\mathbf{v}$,  {we follow the same procedure as in the beginning of the present proof}, the tools as demonstrated  in Collorary \ref{discrete_error_revised}, that  the projector $\mathbf{\Pi_{L^2}}$  has the stability property  %stability of the L2-projector˘ L2 in the L2-norm, namely that
 $||\mathbf{\Pi_{L^2}}{\mathbf{v}}||_{L^2(K)}\le ||{\mathbf{v}}||_{L^2(K)}$,  % for ${\mathbf{u}}\in ??$
%k˘ L2vkL2.�/ � kvkL2.�/, for v 2 ŒV �d, � 2 Th, gives$ 
and we verify that the related inconsistency error bound does not affect % and follows
 the aforementioned  optimal error bound. % estimate.
}}
%+++++++++++++++++++++++++++++++++++++++++++++++++++++++++++++++++++++++++++++=
%}
%+++++++++++++++++++++++++++++++++++++++++++++++++++
\end{proof}
\EK{
\begin{rem}
%The above hp–version a priori error bounds hold without any assumptions on the relative size of the faces 
%The above $hp$-version a priori error estimates main result is valid %hold
% independently of the relative size of  $F$, $F \subset \partial K$, of a given curved boundary element on $%K \in 
%\mathcal{T}^\sharp$. %To aid the understanding of the rates of convergence resulting from the above results, we
We note that if one makes stronger assumptions as mesh quasi-uniformity: %e.g. setting %pK = p ≥ 1, 
$h = \max_{K\in\mathcal{T}^\sharp} h_K,$  %sK = s, s = min{p + 1, l}, and l > 1 + d/2, 
 %and
%assume
% that $diam(F) \sim h_K$, for all faces $F \subset \partial K$, $K \in \mathcal{T}^\sharp$, so that $|F| \sim h_K^{d-1}$.
then the estimate of Theorem \ref{order_revised} \EK{is transformed to} %becomes %turns to
$%\label{apriori_revised}
\vertiii{(\mathbf{u}-\mathbf{u}_h, p-p_h)}
\EK{\leq C }\frac{h^{\mathfrak{p}}}{\mathfrak{p}^{\mathfrak{p}-\frac{1}{2}}}
\Big(||\mathbf{u}||_{\mathfrak{p}+1, \Omega^\sharp} + 
\frac{1}{\mathfrak{p}^\frac{1}{2}}||p||_{\mathfrak{p},\Omega^\sharp}\Big),
$
%|ku − uh|kd ≤ C
%h
%s−1
%p
%l− 3
%2
%kukHl(Ω),
%i.e., 
which verifies optimal convergence in $h$ and suboptimal in $p$ by $p^{1/2}$.
%At the other end of the spectrum, consider the case of transport equation, i.e.,
%when a ≡ 0. In this case, the dG norm |k·|k degenerates to |k·|kar; note that, then
%we have λK = O(hK/p2
%K), and the a priori error bound in Theorem 5.6 reduces to
%|ku − uh|kar ≤
%h
%s− 1
%2
%p
%l−1
%kukHl(Ω).
%This bound is, again, optimal in h and suboptimal in p by p
%1/2 and completely genneralizes the error estimate derived in our previous work [18] to essentially arbitrarily shaped meshes under the same assumption (5.3).
%Remark 5.7. We remark on typical cases which result to simplified formulas for
%λK. Assuming |K| ∼ h
%d
%K, ρK ∼ hK, and |F| ∼ h
%(d−1)
%K for an element K ∈ T
%and for its immediate neighbours, both constants C
%B
%INV and CINV will be defined by
%the first term in the maxima in (4.16) and (4.19), respectively. Then, we deduce
%λK ∼ hK/p2
%K for the important case of advection-dominated problems.
\end{rem}
}
\section{Numerical Experiments/Convergence tests} \label{section6}
 %
%\subsection{Convergence test }%study {\red{ -- adaptive elements case}}
\subsection{1st experiment: Rectangular domain $\Omega^\sharp$}
We consider a two--dimensional test case of (\ref{The Stokes Problem}) in the unit square ${\Omega^\sharp}=\left[0,1\right]^2$ with %manufactured
 exact solution
$$
\mathbf{u}\left(x,y\right)=\left(u\left(x,y\right), -u\left(y,x\right)\right), \quad p\left(x,y\right)=\sin\left(2\pi x\right)\cos\left(2\pi y\right),
$$
where $u(x,y)=\left(\cos\left(2\pi x\right)-1\right)\sin\left(2\pi y\right)$. 
%
       %$uxexact =  (cos(2 pi x)-1) sin(2 pi y)$
%
      %  $uyexact = sin(2 pi x) (1-cos(2 pi y))$
%
 %       $pexact =  sin(2 pi x) cos(2 pi y)$
%
   %     $coeff_f1 = 4 pi^2 sin(2 pi y) (2 cos(2 pi x)-1) +2 pi cos(2 pi x) cos(2 pi y)$
%
 %      $ coeff_f2 = 4 pi^2 sin(2 pi x) (1- 2 cos(2 pi y)) -2 pi sin(2 pi x) sin(2 pi y)$
%

Note that the mean value of $p\left(x,y\right)$ over \EK{$\Omega^\sharp$} vanishes by construction,
%; i.e. $p \in L_{0}^{2}(\Omega)$
thus ensuring that the problem (\ref{The Stokes Problem}) is uniquely solvable. As in subsection \ref{22}, in the \redhot{spirit of arbitrarily shaped discontinuous Galerkin method on the boundary  %domain 
approach}, we consider the original domain {$\Omega^\sharp$} %r
{%as being immersed in the background domain ${\Omega}=\left[-0.5, 1.5\right]^{2}$ (
as seen in Figure \ref{geometry}}\greenX{'s first geometry}.  A level set description of the geometry is possible through the function
\begin{equation}\label{lset}
\phi\left(x,y\right)=\left|x-0.5\right|+\left|y-0.5\right|+\left|\left|x-0.5\right|-\left|y-0.5\right|\right|-1<0.
\end{equation}
To investigate the error convergence behavior of the discretization \eqref{cutdg}, we consider a sequence of successively refined tessellations $\{\mathcal{T}^\sharp_{h_
i}\}_{i>0}$ of ${\Omega^\sharp}$ with
mesh parameters $h_i=2^{-i-2}$, for $i=0, \ldots, 7$. 
{%We construct a sequence of domain-fitted curvilinear meshes as follows. First, using the mesh generator from [54], we construct a sequence of meshes approximating the domain Ω comprising of 65, 109, 527, 2266, and 9411 quasi-uniform triangular elements, respectively. The 65-element mesh is shown in Figure 9. 
As it is shown in \greenX{the rectangular geometry case of} Figure \ref{mesh_geometry} %Then, 
exploiting the  level-set function information \EK{on} $\partial\Omega^\sharp$, elements containing straight faces approximating the %\redX{\st{curved/polygonal}}
 \greenX{polytopal} boundary are marked. Finally, all marked elements are treated \greenX{as %\redX{\st{curved}} 
classical} triangular elements 
 \greenX{in the interior, with polytopal elements only on the boundary} %which are actu
%with two straight faces and one curved face
described by the domain \TODO%ek
{level-set} function, thus capturing the domain exactly.}
To allow for several polynomial degrees $\mathfrak{p}_i$, the symmetric interior penalty parameter in \eqref{a_alt} scales as $\sigma = \mathfrak{p}_i^2/h_i$. %  and the stabilization constants  in \eqref{c}, \eqref{ju_high_order} and \eqref{jp_high_order} scale as $\gamma_0=1$ and $\gamma_1=\gamma_{\mathbf{u}}=\gamma_{p}=0.1$, respectively. 
A sparse %direct
solver has been used to solve the arising linear systems. 

As expected from the theoretical error estimate stated in Theorem \ref{order_revised}, optimal $ \mathfrak{p}$-th order convergence rates with respect to the $H^{1}$--norm of the velocity error and the $L^{2}$--norm of the pressure error are in agreement with %indeed verified by 
the error results as they are visualized %numerical findings
 in Table \ref{table11} \magentaX{($ \mathfrak{p}_1/\mathfrak{p}_{0}$) and Table \ref{table22} ($ \mathfrak{p}_2/\mathfrak{p}_{1}$). Although,} as seen in Table \ref{table33} {\magentaX{($ \mathfrak{p}_3/\mathfrak{p}_{2}$) the convergence rates are optimal for the velocity and} suboptimal for pressure. {The superiority of the higher order framework is obvious %approach being evident. 
and show that %, although the orders of convergence remain the same, 
the method is efficient}}. Numerical experiments \EK{verify} these facts%are displayed
, indeed, for larger $\mathfrak{p}$, much smaller errors are attained in progressively coarser meshes, for $ \mathfrak{p}_i/\mathfrak{p}_{i-1}$ Taylor--Hood velocity/pressure pairs
 %equal 
order, and the inf--sup stability is guaranteed. %by the pressure--Poisson stabilizing term $c_h(\mathbf{u}_h,p_h;q_h)$ in \eqref{c} which penalizes the pressure gradients in the bulk of the domain. %Notice that the jump $[\![ q_h ]\!]$ in the second summand of the term $c_h(\mathbf{u}_h,p_h;q_h)$ remains active in case of $P_1-P_1$ discontinuous Galerkin approximations. 
%We thus, test its effect on $P_1-P_1$ convergence rates, and obtain two individual cases in Table \ref{table1}: one stabilized with the full term $c_h(\mathbf{u}_h,p_h;q_h)$ in \eqref{c} and the other omitting the second summand of $c_h(\mathbf{u}_h,p_h;q_h)$, i.e., for $\gamma_1=0$.
The aforementioned results are visualized in Figure \ref{sensitivity_V2_P3}.
  
\begin{table}[htbp]
\centering  
%\resizebox{14.5cm}{!}
{\begin{tabular}{|c|c|c||c|c|} 
\toprule
Discretization   & \multicolumn{4}{c|}{Errors and convergence rates: $\mathfrak{p}_1 / \mathfrak{p}_0$ polynomials case}    \\ 
\midrule 
$h_{\max}$ & $\left\|\mathbf{u}-\mathbf{u}_h\right\|_{1,\Omega^\sharp}$ & Conv. rate 
           & $\left\|p-p_h\right\|_{\Omega^\sharp}$ &Conv. rate
           \\
\midrule 
$0.25 (=2^{-2})$ & 1.469461 &  --     & 0.8756151 &       --         \\
$0.125 (=2^{-3})$ & 0.871741 & 0.7533148 & 0.4336638 & 1.0137197    \\
$0.0625 (=2^{-4})$ & 0.526300 & 0.7280155 & 0.3289021 & 0.3989189     \\
$0.03125 (=2^{-5})$ & 0.290011 & 0.8597764 & 0.1901614 & 0.7904334     \\
$0.015625 (=2^{-6})$ & 0.151577 & 0.9360504 & 0.1006410 & 0.9180059     \\
$0.0078125 (=2^{-7})$ & 0.077476 & 0.9682347 & 0.0515098 & 0.9662987     \\
$0.00390625 (=2^{-8})$ & 0.039269 & 0.9803399 & 0.0261353 & 0.9788479      \\
\midrule
Mean: &    --    & {0.8709553} &     --    & {0.8443707} 
 \\
\bottomrule
\end{tabular}}
\caption{\greenX{Rectangular geometry:} Errors and rates of convergence with respect to $H^1$-norm for the velocity and $L^2$-norm for the pressure, using $\mathfrak{p}_1 / \mathfrak{p}_0$ finite elements% \TODOr{without boundary elements area stabilization}
.}\label{table11}
\end{table}
\begin{table}[htbp]
\centering  
%\resizebox{14.5cm}{!}
{\begin{tabular}{|c|c|c||c|c|} 
\toprule
  Discretization  & \multicolumn{4}{c|}{Errors and convergence rates: $\mathfrak{p}_2 / \mathfrak{p}_1$ polynomials case}  \\ 
\midrule
$h_{\max}$ & $\left\|\mathbf{u}-\mathbf{u}_h\right\|_{1,\Omega^\sharp}$ & Conv. rate   & $\left\|p-p_h\right\|_{\Omega^\sharp}$ & Conv. rate   \\
\midrule 
$0.25 (=2^{-2})$ & 0.2372591 &   --    & 0.1131108 &        --     \\
$0.125 (=2^{-3})$ & 0.1129321 & 1.0710074 & 0.0512201 & 1.1429531  \\
$0.0625 (=2^{-4})$ & 0.0337229 & 1.7436526 & 0.0165609 & 1.6289302  \\
$0.03125 (=2^{-5})$ & 0.0088867 & 1.9240082 & 0.0046825 & 1.8224281  \\
$0.015625 (=2^{-6})$ & 0.0023107 & 1.9433135 & 0.0012434 & 1.9129403  \\
$0.0078125 (=2^{-7})$ & 0.0005816 & 1.9900301 & 0.0003073 & 2.0165782  \\
\midrule
Mean: &  --   & {1.7344024} &    --     & {1.7047660}
 \\
\bottomrule
\end{tabular}}
\caption{\greenX{Rectangular geometry:} Errors and rates of convergence with respect to $H^1$-norm for the velocity and $L^2$-norm for the pressure, using  $\mathfrak{p}_2 / \mathfrak{p}_1$ and finite elements% \TODOr{without boundary elements area stabilization}
.}\label{table22}
\end{table}
%============================================================
\begin{table}[htbp]
\centering  
%\resizebox{14.5cm}{!}
{
\begin{tabular}{|c|c|c||c|c|} 
\toprule
  Discretization  &  \multicolumn{4}{c|}{Errors and convergence rates: $\mathfrak{p}_3 /\mathfrak{p}_2$ polynomials case} \\ 
\midrule 
$h_{\max}$ &    $\left\|\mathbf{u}-\mathbf{u}_h\right\|_{1,\Omega^\sharp}$ & Conv. rate
           & $\left\|p-p_h\right\|_{\Omega^\sharp}$ & Conv. rate \\
\midrule 
$0.25 (=2^{-2})$           & 0.0466716 &   --             & 0.0184400 &    --   \\
$0.125 (=2^{-3})$           & 0.0076981 & 2.5999546  & 0.0069364 & 1.4105717 \\
$0.0625 (=2^{-4})$           & 0.0012099 & 2.6696092  & 0.0017273 & 2.0056609\\
$0.03125 (=2^{-5})$           & 0.0001644 & 2.8789030  & 0.0009789 & {0.8192784} \\
$0.015625 (=2^{-6})$           & 2.5518e-05 & 2.6883086 & 0.0003071 & 1.6720465 \\
\midrule
Mean:       &          --       & {2.7091938} &   --      & {1.4768894}
 \\
\bottomrule
\end{tabular}}
\caption{\greenX{Rectangular geometry:} Errors and rates of convergence with respect to $H^1$-norm for the velocity and $L^2$-norm for the pressure, using  $\mathfrak{p}_3 /\mathfrak{p}_2$ finite elements%without boundary elements area stabilization
.}
\label{table33}
\end{table}
%============================================================
%============================================================
\begin{table}[htbp]
\centering  
%\resizebox{14.5cm}{!}
{\begin{tabular}{|c|c|c||c|c|} 
\toprule
  Discretization  &  \multicolumn{4}{c|}{Errors and convergence rates: $\mathfrak{p}_3 / \mathfrak{p}_2$ polynomials case} \\ 
\midrule 
$h_{\max}$ &    $\left\|\mathbf{u}-\mathbf{u}_h\right\|_{1,\Omega^\sharp}$ & Conv. rate
           & $\left\|p-p_h\right\|_{\Omega^\sharp}$ & Conv. rate \\
\midrule 
$0.25 (=2^{-2})$           & 0.3386166 &   --            & 0.7184173 &    --   \\
$0.125 (=2^{-3})$         & 0.0247465 & 3.7743523  &0.0590158  & 3.60564722 \\
$0.0625 (=2^{-4})$       & 0.0039329 & 2.6535468  & 0.0096287  & 2.61568216\\
$0.03125 (=2^{-5})$     & 0.0006434 &  2.6117326 & 0.0017089 &  2.4942619 \\
$0.015625 (=2^{-6})$   & 0.0002045  & 1.6536038 & 0.0003671 &  2.2187371\\
\midrule
Mean:       &          --       & {2.67330889 %\TODO{...2.7091938}
} &   --      & {2.73358212%\TODO{...1.4768894}
}
 \\
\bottomrule
\end{tabular}}
\caption{\greenX{Rectangular geometry:} Errors and rates of convergence with respect to $H^1$-norm for the velocity and $L^2$-norm for the pressure, using  $\mathfrak{p}_3 / \mathfrak{p}_2$ finite elements with higher order ghost boundary elements area stabilization in the context of \cite{AreKarKat22}.}
\label{table44}
\end{table}
%
%============================================================

\magentaX{Moreover in} Table \ref{table44}, and in the context of \cite{AreKarKat22}, a sequence of $ \mathfrak{p}_3/\mathfrak{p}_{2}$ approximations for the % first component   of the 
velocity and pressure solution in progressively finer %unfitted
 \greenX{ meshes
% \redX{\st{and polynomial degrees in an $h%\mathfrak
%{p}$- sense}}
  is} illustrated, showcasing the convergence of the method and
the errors with respect to \magentaX{several mesh sizes}. 
%\redX{ \st{orders of $h%\mathfrak
%{p}
%$-}} consideration %magnitude of the ghost penalty parameters $\gamma_{\mathbf{u}}$, and $\gamma_{p}$
% is remarkable 
%\redX{\st{in all three polynomial order cases}}. 
Further investigation, revealed that contrasting the results between Table \ref{table33} and Table \ref{table44} related to  the $ \mathfrak{p}_3/\mathfrak{p}_{2}$ case, i.e. comparing the suggested in the present work framework without 
 any ghost type of stabilization on the boundary zone area, against to the  %ek
 full ghost penalty stabilization, showed that: the suggested method, yields better error behavior and better convergence rates \greenX{ only for the velocity, although suboptimal for the pressure even if the errors are better, \magentaX{a phenomenon that will be discussed later and it is probably caused by the quadrature on the polytopal boundary elements we employed.}%\redX{--the latter is explained later in the same paragraph. 
%}
} %with the ghost penalty stabilized one. 
Therefore, %$ \mathfrak{p}_3/\mathfrak{p}_{2}$ case results 
we \greenX{may avoid} integration errors related to the normal derivative up to the third-order jump terms \greenX{used in  the ghost penalty stabilization approach}, \cite{MLLR12,MLLR14%AreKarKat22
}, %\redhot{which} 
which lead to a significant improvement for the velocity and pressure errors, as well as, crucial computational cost savings. %\footnote{A similar convergence test for $P_1-P_0$ elements  reported  in Table \ref{table4} reveals convergence order of $\mathcal{O}(h)$ for both $H^{1}$--norm velocity error and $L^{2}$--norm pressure error (setting $\beta=10$ and retaining the above values for parameters $\gamma_0=1, \gamma_1=\gamma_{\mathbf{u}}=\gamma_{p}=0.1 $). Note that in this case, the first summand of the stabilization term $c_h(\mathbf{u}_h,p_h;q_h)$ in \eqref{c}  vanishes, since $\Delta\mathbf{u}_h|_{T} =0$ for $\mathbf{u}_h\in \mathcal{P}^1(T)$ and $\nabla(q_h|_{T}) =0$ for $q_h\in\mathcal{P}^0(T)$. This evidence suggests that an extension of our  results to Taylor--Hood elements is possible.}
We again underline that  the evaluation of  boundary elements stabilization is avoided, {namely}, % which %the forms $j_u(\mathbf{u}_h,\mathbf{v}_h)$ in \eqref{ju_high_order} and $j_p(p_h,q_h)$ in \eqref{jp_high_order} requires 
computations of gradually \redhot{higher order normal derivative jumps} --depended on  the finite element order-- seem unnecessary with the suggested approach.  % increases
%. 
%Hence, inspired by the approach in \cite[Lemma 3.1 and Remark 6]{P2018}, to reduce the complexity and also avoid accumulation of errors in numerical integration, we resort to evaluating only the first order derivative jump of the term $j_u(\mathbf{u}_h,\mathbf{v}_h)$ for all polynomial degrees. Nevertheless, for the pressure we retain the ghost penalty form $j_p(p_h,q_h)$ in its standard formulation with no alteration.  
%
 We also re-note that the pressure convergence rate in Table \ref{table33} is suboptimal although the error is much smaller e.g. than the case where full boundary area elements stabilization is applied for $\mathfrak{p}_3 / \mathfrak{p}_2$ and as it is {reported in Table \ref{table44}}.% or  in the $\mathfrak{p}_3 / \mathfrak{p}_3$ as it is reported in Table
\begin{figure}
\centering
\begin{tabular}{cc}
\subfigure[ $\|\mathbf{u}-\mathbf{u}_h\|_{1,\Omega^\sharp}$ error, conv.rates, and mean conv. rates]{
\includegraphics[scale=0.50]{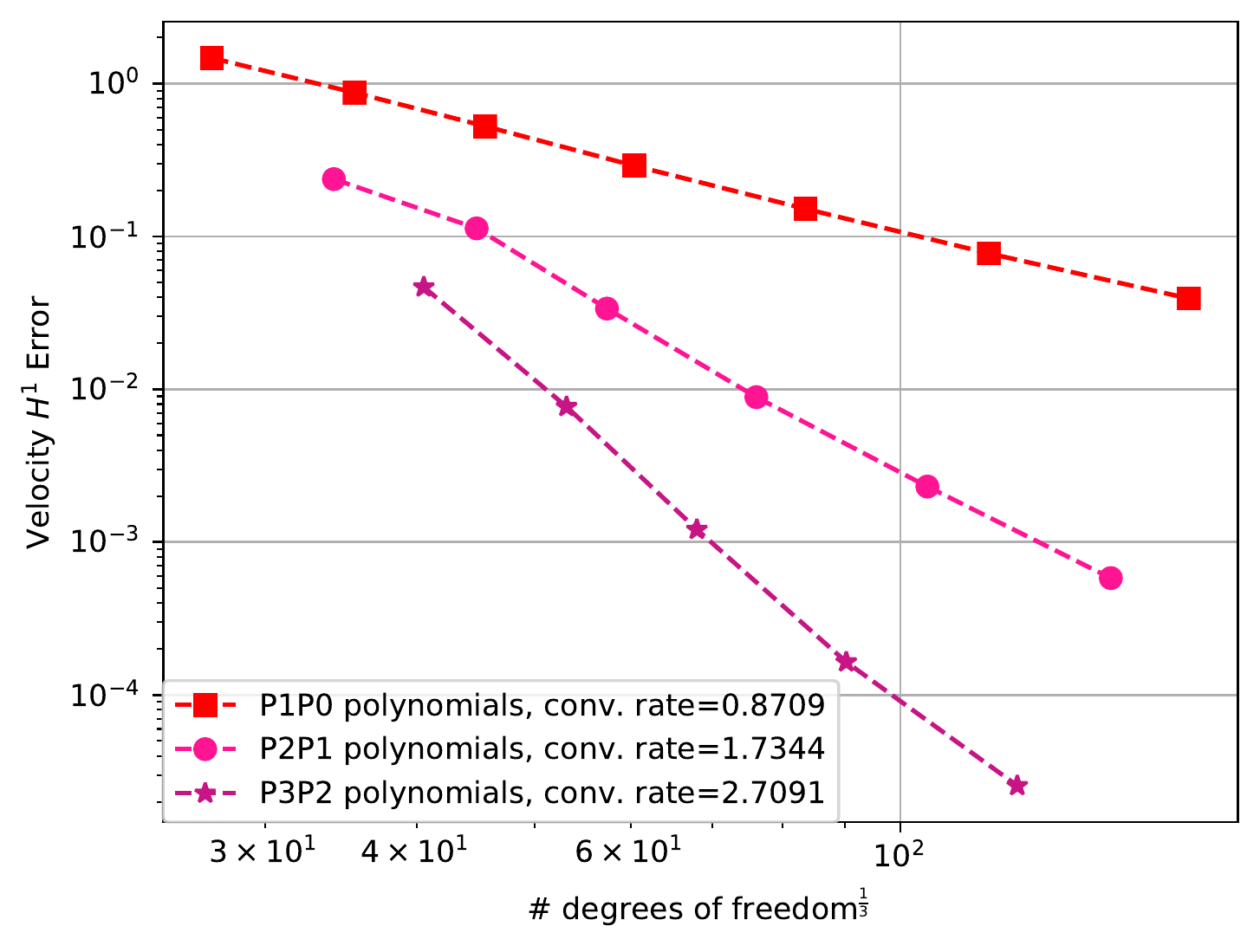}
}
\subfigure[$\|p-p_h\|_{\Omega^\sharp}$  error, conv.rates, and mean conv. rates]{
\includegraphics[scale=0.50]{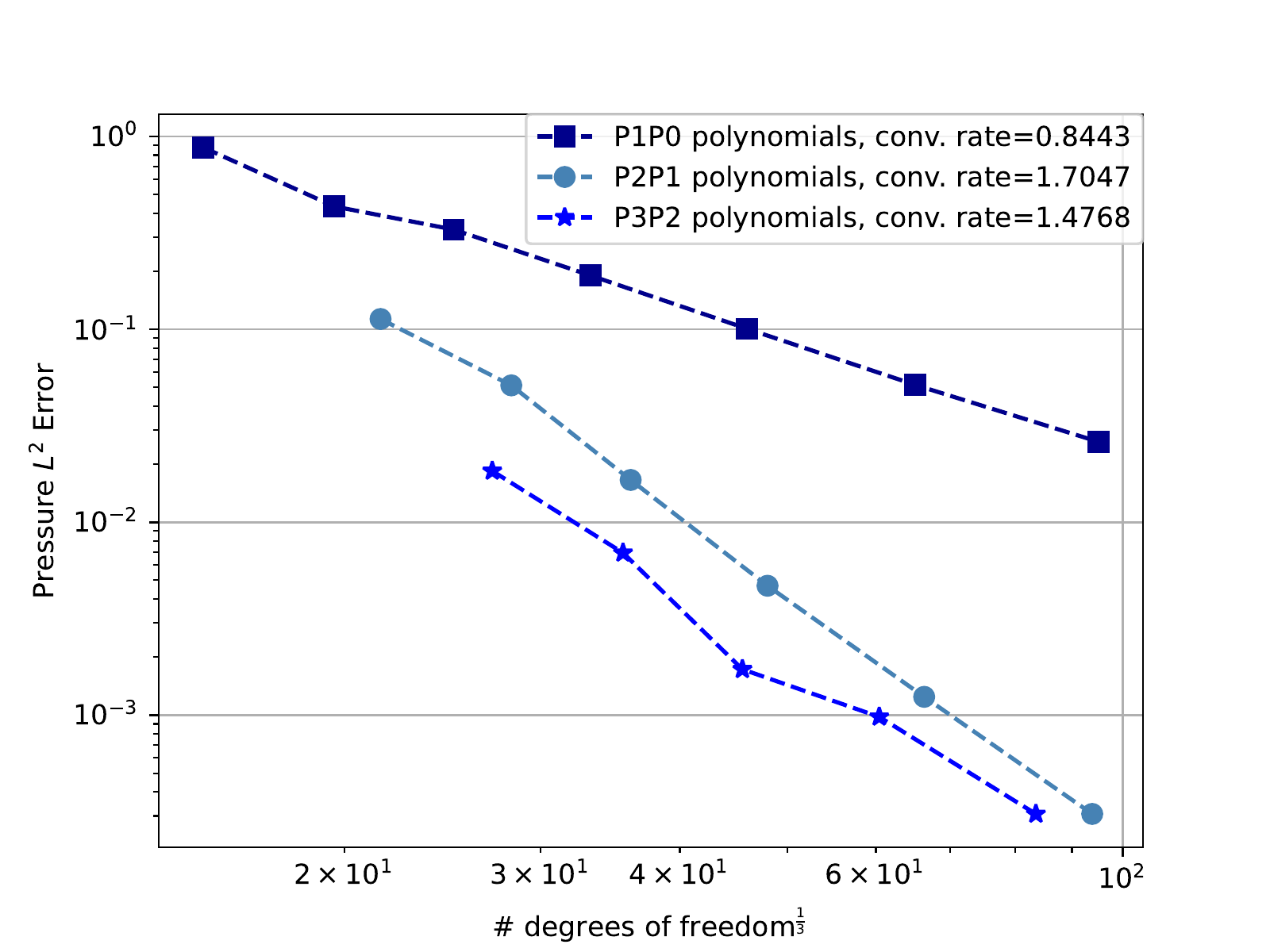}
}
\end{tabular}
\caption{\greenX{Rectangular geometry:} Visualization of the $H^1$-norm velocity errors and $L^2$-norm pressure errors with respect to the discretization size.}\label{sensitivity_V2_P3}
\end{figure}
%
%\begin{rem}
Finally, we report that in practice, the condition number for arbitrarily shaped boundary elements discontinuous Galerkin methods to solve Stokes
problem is typically very large. The employment of efficient multigrid preconditioners for such cases, is left as a further challenge, see also the works \cite{AK21}, \cite{Dong19}. \blueX{We also clarify that for the {polytopal} %curved
 boundary elements, the current implementation performs quadrature by a sufficiently fine sub-triangulation approximating {properly} the curved element. {Although, this first implementation of ours could be the issue of noticing slightly lower convergence rate than the predicted and specially on highest order polynomial case, and even worse for the {pressure component.}} %\EK{exactly.} %as in \cite{ }
%the agglomerated-element case. We stress, however, 
It is noted  that in this case the sub-triangulation is only used to generate the quadrature rules.} % with calculations are fully parallelizable in the sense %: in [30] it is shown 
%that quadrature cost becomes irrelevant if modern GPU architectures are used in the implementation of assembly. 
Of course, this is not the only possibility. For instance, domain-exact quadrature algorithms for many curved domains exist,  for such algorithms see \cite{ARTIOLI20202057, Dong2021GPUacceleratedDG} and the references therein, and we plan to investigate this issue in the future. %Finally that the aforementioned approach is enriched with the  {\green{employing  levelset boundary elements area quite coarse refinement}} since it is endowed by
\greenX{
\subsection{2nd experiment: Circular domain $\Omega^\sharp$}
We consider a two--dimensional test case of (\ref{The Stokes Problem}) in  $\Omega^\sharp$ to be a \greenX{disc} of radius $r=1$
%
%$\EK{\Omega^\sharp}=\left[0,1\right]^2$
 with %manufactured
 exact solution
$$
\mathbf{u}\left(x,y\right)=\left(u_x\left(x,y\right), u_y\left(y,x\right)\right), \quad p\left(x,y\right)=(y^2+x^2-r^2)^2 (2 y^2+x^2) exp(-k (y^2+ x^2))/2,
$$
where 
    $$u_x =  -y (y^2+x^2 - r^2) exp(-k (y^2+x^2)/2)  \text{ and   }  u_y=  x (y^2+x^2-r^2) exp(-k (y^2+x^2)/2), $$
$k=3\pi /2.$
%and force ${\mathbf{f}}=(f_x,f_y)$ for
%   $ f_x =   ((k^2 exp(k x^2) y^5+(2 k^2 x^2-k^2 r^2-8 k) exp(k x^2) y^3+(k^2 x^4+(-(k^2 r^2)-8 k) x^2+4 k r^2+8) exp(k x^2) y) exp(k y^2)+(-(2 k x exp((k x^2)/2) y^6)+((4 k r^2+5) x-5 k x^3) exp((k x^2)/2) y^4+(-(4 k x^5)+(6 k r^2+8) x^3+(-(2 k r^4)-6 r^2) x) exp((k x^2)/2) y^2+(-(k x^7)+(2 k r^2+3) x^5+(-(k r^4)-4 r^2) x^3+r^4 x) exp((k x^2)/2)) exp((k y^2)/2)) exp(-((3 k y^2)/2)-(3 k x^2)/2) $  
 %
 %       $coeff_f2 =   -(((k^2 x exp(k x^2) y^4+(2 k^2 x^3+(-(k^2 r^2)-8 k) x) exp(k x^2) y^2+(k^2 x^5+(-(k^2 r^2)-8 k) x^3+(4 k r^2+8) x) exp(k x^2)) exp(k y^2)+(2 k exp((k x^2)/2) y^7+(5 k x^2-4 k r^2-6) exp((k x^2)/2) y^5+(4 k x^4+(-(6 k r^2)-10) x^2+2 k r^4+8 r^2) exp((k x^2)/2) y^3+(k x^6+(-(2 k r^2)-4) x^4+(k r^4+6 r^2) x^2-2 r^4) exp((k x^2)/2) y) exp((k y^2)/2)) exp(-((3 k y^2)/2)-(3 k x^2)/2))
  %      $
%
%$u(x,y)=\left(\cos\left(2\pi x\right)-1\right)\sin\left(2\pi y\right)$. 
Note that the mean value of $p\left(x,y\right)$ over {$\Omega^\sharp$} vanishes by construction,
%; i.e. $p \in L_{0}^{2}(\Omega)$
thus ensuring that the problem (\ref{The Stokes Problem}) is uniquely solvable as well as it is divergence free. Again, following  subsection \ref{22}, in the {spirit of arbitrarily shaped discontinuous Galerkin method on the boundary  %domain 
approach}, we consider the original domain {$\Omega^\sharp$} %r
{%as being immersed in the background domain ${\Omega}=\left[-0.5, 1.5\right]^{2}$ (
as it is  in Figure \ref{geometry}}'s second image.  A level set description of the geometry is possible through the function
\begin{equation}\label{lset}
\phi\left(x,y\right)=y^2+x^2 - r^2<0.
\end{equation}
\begin{table}[htbp]
\centering  
%\resizebox{14.5cm}{!}
{\begin{tabular}{|c|c|c||c|c|} 
\toprule
  \greenX{Discretization}  &  \multicolumn{4}{c|}{\greenX{Errors and convergence rates: $\mathfrak{p}_1 / \mathfrak{p}_0$ polynomials case}} \\ 
\midrule 
\greenX{$h_{\max}$} &    \greenX{$\left\|\mathbf{u}-\mathbf{u}_h\right\|_{1,\Omega^\sharp}$} & \greenX{Conv. rate}
           & \greenX{$\left\|p-p_h\right\|_{\Omega^\sharp}$} & \greenX{Conv. rate} \\
\midrule 
\greenX{$0.62500625$}                      & \greenX{0.2091145} &   --                            & \greenX{0.1462374} &    --   \\
\greenX{$0.31250312$}                      & \greenX{0.1643200} & \greenX{0.3477844} & \greenX{0.0742179}  & \greenX{0.9784737} \\
\greenX{$0.15625156$}                      & \greenX{0.0842741} & \greenX{0.9633462} & \greenX{0.0469684}  & \greenX{0.6600764}\\
\greenX{$0.07812578$}                      & \greenX{0.0441366}&  \greenX{0.9331140} & \greenX{0.0265372} &  \greenX{0.8236700} \\
\greenX{$0.03906289$}                      & \greenX{0.0226625} & \greenX{0.9616639} & \greenX{0.0141321} &  \greenX{0.9090371}\\
\greenX{$0.01953144$}                      & \greenX{0.0114507} & \greenX{0.9848749} & \greenX{0.0072050} &  \greenX{0.9718974}\\
\greenX{$\quad\,0.00976572\quad\,$}   & \greenX{0.0057310} & \greenX{0.9985631} & \greenX{0.0035962} &  \greenX{1.0025094}\\
\midrule
\greenX{Mean:}       &          --            & {\greenX{0.968312}} &   --                           & {\greenX{0.873438}}
 \\
\bottomrule
\end{tabular}}
\caption{\greenX{Circular geometry: Errors and rates of convergence with respect to $H^1$-norm %\redX{for the embedded circle boundary experiment} 
for the velocity and $L^2$-norm for  \greenX{the pressure, using}  {\greenX{$\mathfrak{p}_1 / \mathfrak{p}_0$ finite elements and with no ghost boundary elements area stabilization.}}}}
\label{table5p1p0C}
\end{table}
\begin{table}[htbp]
\centering  
%\resizebox{14.5cm}{!}
{\begin{tabular}{|c|c|c||c|c|} 
\toprule
  \greenX{Discretization}  &  \multicolumn{4}{c|}{\greenX{Errors and convergence rates: $\mathfrak{p}_2 / \mathfrak{p}_1$ polynomials case}} \\ 
\midrule 
\greenX{$h_{\max}$} &    \greenX{$\left\|\mathbf{u}-\mathbf{u}_h\right\|_{1,\Omega^\sharp}$} & \greenX{Conv. rate}
           & \greenX{$\left\|p-p_h\right\|_{\Omega^\sharp}$} & \greenX{Conv. rate} \\
\midrule 
\greenX{$0.62500625$}                      & \greenX{0.0328117} &   --                            & \greenX{0.0145547} &    --   \\
\greenX{$0.31250312$}                      & \greenX{0.0319032} & \greenX{0.0405088} & \greenX{0.0123406}  & \greenX{0.2380803} \\
\greenX{$0.15625156$}                      & \greenX{0.0082061} & \greenX{1.9589227} & \greenX{0.0039063}  & \greenX{1.6595269}\\
\greenX{$0.07812578$}                      & \greenX{0.0020995}&  \greenX{1.9666160} & \greenX{0.0010171} &  \greenX{1.9412207} \\
\greenX{$0.03906289$}                      & \greenX{0.0005342} & \greenX{1.9746348} & \greenX{0.0002690} &  \greenX{1.9189042}\\
\greenX{$\quad\,0.01953144\quad\,$}   & \greenX{0.0001331} & \greenX{2.0038963} & \greenX{6.759e-05} &  \greenX{1.9927008}\\
\midrule
\greenX{Mean:}       &          --            & {\greenX{1.976017}} &   --                           & {\greenX{1.878088}}
 \\
\bottomrule
\end{tabular}}
\caption{\greenX{Circular geometry: Errors and rates of convergence with respect to $H^1$-norm %\redX{for the embedded circle boundary experiment} 
for the velocity and $L^2$-norm for  \greenX{the pressure, using}  {\greenX{$\mathfrak{p}_2 / \mathfrak{p}_1$ finite elements and with no ghost boundary elements area stabilization.}}}}
\label{table5p2p1C}
\end{table}
\begin{table}[htbp]
\centering  
%\resizebox{14.5cm}{!}
{\begin{tabular}{|c|c|c||c|c|} 
\toprule
  \greenX{Discretization}  &  \multicolumn{4}{c|}{\greenX{Errors and convergence rates: $\mathfrak{p}_3 / \mathfrak{p}_2$ polynomials case}} \\ 
\midrule 
\greenX{$h_{\max}$} &    \greenX{$\left\|\mathbf{u}-\mathbf{u}_h\right\|_{1,\Omega^\sharp}$} & \greenX{Conv. rate}
           & \greenX{$\left\|p-p_h\right\|_{\Omega^\sharp}$} & \greenX{Conv. rate} \\
\midrule 
\greenX{$0.62500625$}           & \greenX{0.0120753} &   --            & \greenX{0.0070916} &    --   \\
\greenX{$0.31250312$}         & \greenX{0.0044922} & \greenX{1.4265409}  &\greenX{0.0014351}  & \greenX{2.3048990} \\
\greenX{$0.15625156$}       & \greenX{0.0005162} & \greenX{3.1212517}  & \greenX{0.0001907}  & \greenX{2.9113670}\\
\greenX{$0.07812578$}     & \greenX{6.4007e-05} &  \greenX{3.0118057} & \greenX{2.7194e-05} &  \greenX{2.8103814} \\
\greenX{$\quad\,0.03906289\quad\,$}   & \greenX{8.3068e-06}  & \greenX{2.9458756} & \greenX{5.3857e-06} &  \greenX{2.3361018}\\
\midrule
\greenX{Mean:}       &          --       & {\greenX{2.62636850}} &   --      & {\greenX{2.5906872}%\TODO{...1.4768894}
}
 \\
\bottomrule
\end{tabular}}
\caption{\greenX{Circular geometry: Errors and rates of convergence with respect to $H^1$-norm %\redX{for the embedded circle boundary experiment} 
for the velocity and $L^2$-norm for  \greenX{the pressure, using}  {\greenX{$\mathfrak{p}_3 / \mathfrak{p}_2$ finite elements and with no ghost boundary elements area stabilization.}}}}
\label{table5p3p2C}
\end{table}
%
%\magentaX{
%\begin{table}[htbp]
%\centering  
%%\resizebox{14.5cm}{!}
%{\begin{tabular}{|c|c|c||c|c|} 
%\toprule
 % \redX{Discretization}  &  \multicolumn{4}{c|}{\redX{Errors and convergence rates: $\mathfrak{p}_3 / \mathfrak{p}_2$ polynomials case}} \\ 
%\midrule 
%\redX{$h_{\max}$} &    \redX{$\left\|\mathbf{u}-\mathbf{u}_h\right\|_{1,\Omega^\sharp}$} & \redX{Conv. rate}
 %          & \redX{$\left\|p-p_h\right\|_{\Omega^\sharp}$} & \redX{Conv. rate} \\
%\midrule 
%\redX{$0.25 (=2^{-2})$}           & \redX{0.0056420} &   --            & \redX{0.0038188} &    --   \\
%\redX{$0.125 (=2^{-3})$}         & \redX{0.0020551} & \redX{1.4569654}  &\redX{0.0009893}  & \redX{1.9485583} \\
%\redX{$0.0625 (=2^{-4})$}       & \redX{0.0002564} & \redX{3.0028365}  & \redX{0.0001901}  & \redX{2.3797013}\\
%\redX{$0.03125 (=2^{-5})$}     & \redX{3.8289e-05} &  \redX{2.7433356} & \redX{5.5661e-05} &  \redX{1.7720309} \\
%\redX{$0.015625 (=2^{-6})$}   & \redX{8.2568e-06}  & \redX{2.2132710} & \redX{1.7385e-05} &  \redX{1.6787694}\\
%\midrule
%\redX{Mean:}       &          --       & {\redX{2.3541021} %\TODO{...2.7091938}
%} &   --      & {\redX{1.9447650}%\TODO{...1.4768894}
%}
 %\\
%\bottomrule
%\end{tabular}}
%\caption{\redX{Errors and rates of convergence with respect to $H^1$-norm for the embedded circle boundary experiment for the velocity and $L^2$-norm for the pressure, using  {\magentaX{$\mathfrak{p}_3 / \mathfrak{p}_2$ finite elements and witho no  ghost boundary elements area stabilization.}}}}
%\label{table55}
%\end{table}
%
%}
%
\begin{figure}
\centering
\begin{tabular}{cc}
\subfigure[ \greenX{$\|\mathbf{u}-\mathbf{u}_h\|_{1,\Omega^\sharp}$ error, conv.rates, and mean conv. rates}]{
\includegraphics[scale=0.50]{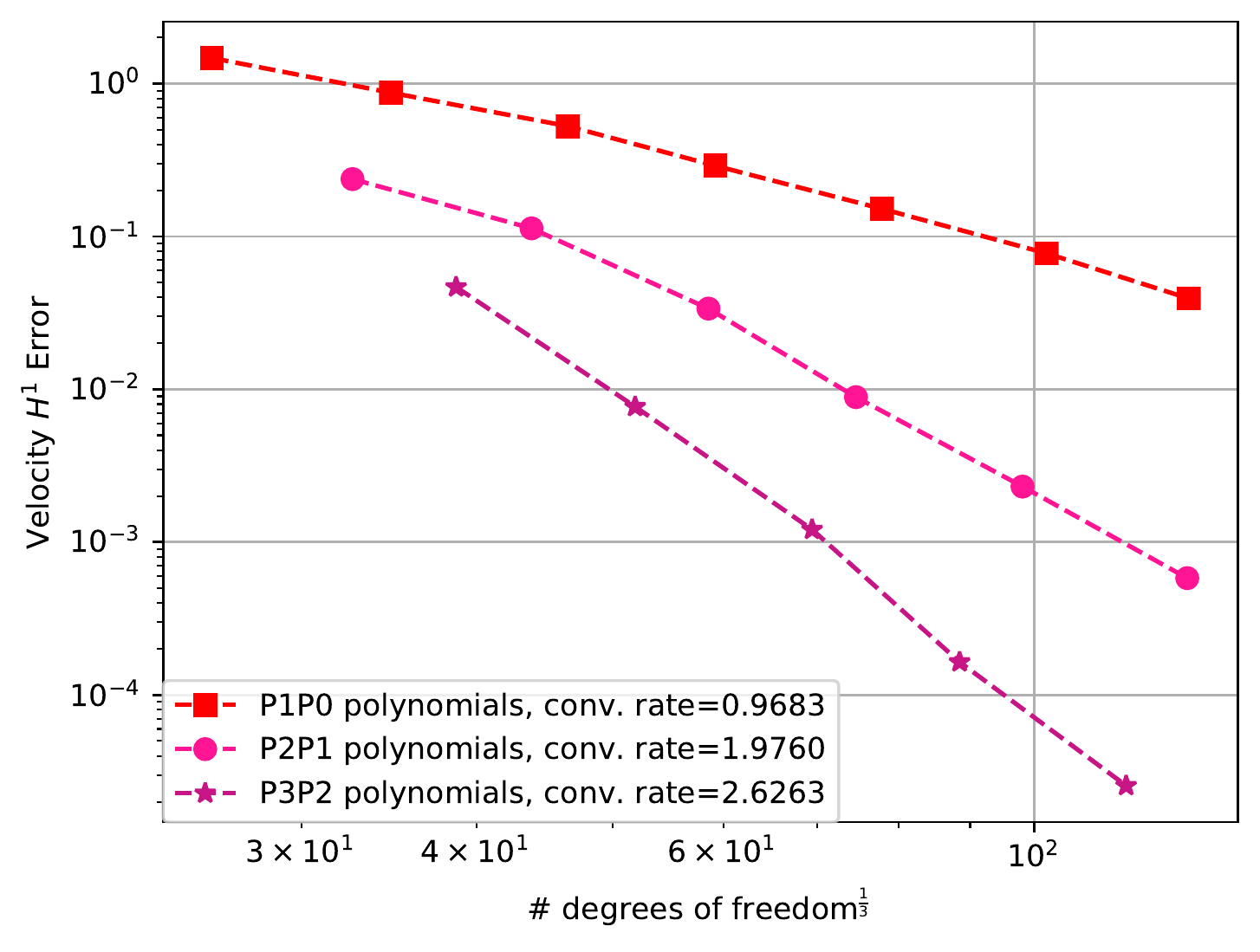}
}
\subfigure[\greenX{$\|p-p_h\|_{\Omega^\sharp}$  error, conv.rates, and mean conv. rates}]{
\includegraphics[scale=0.50]{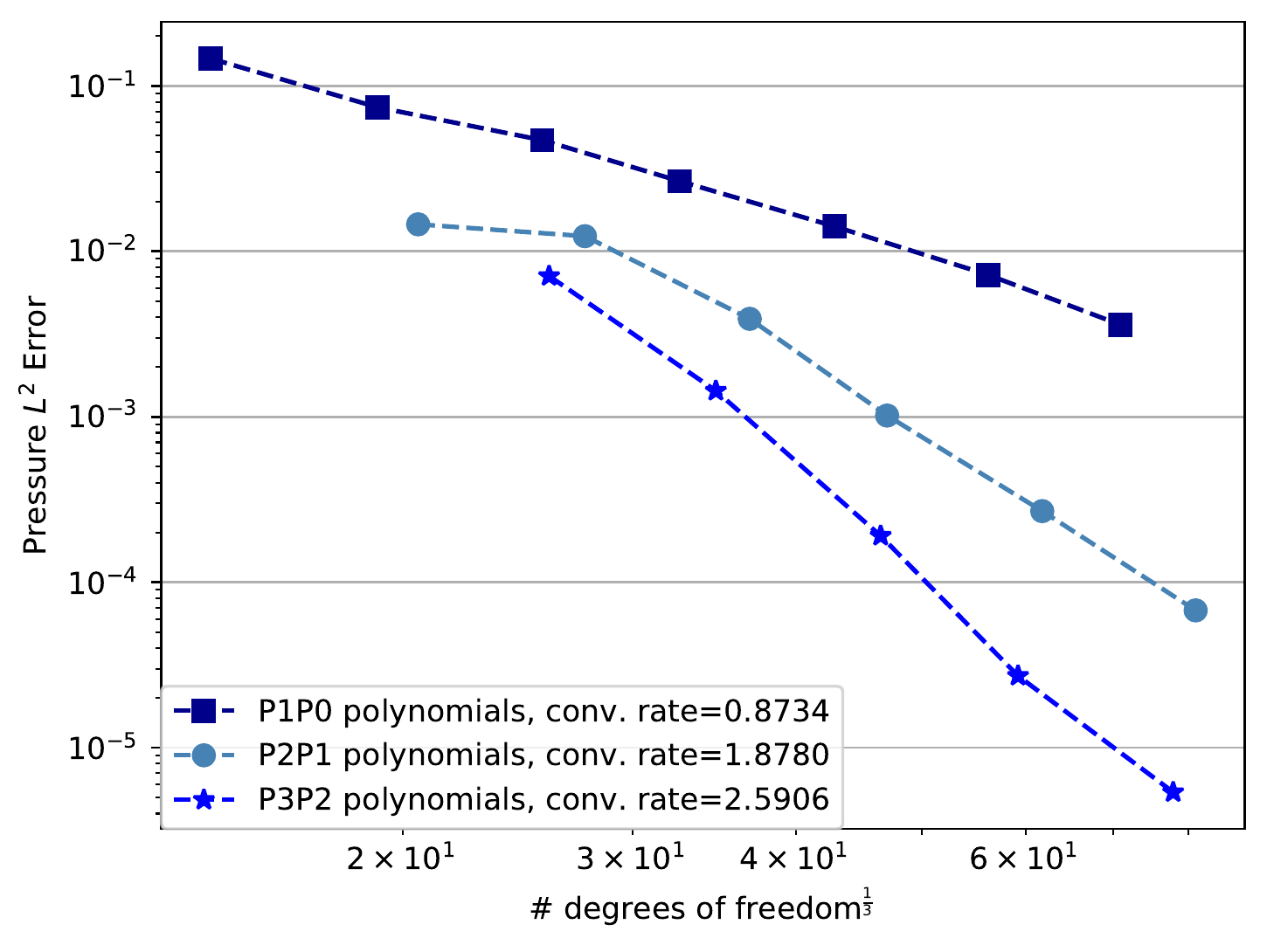}
}
\end{tabular}
\caption{\greenX{Circular geometry: Visualization of the $H^1$-norm velocity errors and $L^2$-norm pressure errors \greenX{with respect to the discretization size.}}}\label{sensitivity_V2_P3_C}
\end{figure}
We can easily confirm the results of Theorem \ref{order_revised} with the results as reported in Tables \ref{table5p1p0C},  \ref{table5p2p1C},  \ref{table5p3p2C} for $\mathfrak{p}=1,2,3$ respectively and for Taylor-Hood $\mathbf{u}$, $p$ pairs. The superiority of the higher order cases is clarified, with respect to the errors and to the convergence rates where we manage better results in a much coarser mesh. This can also be seen with a glance in the visualization of Figure \ref{sensitivity_V2_P3_C}.    

\blueX{Finally, we highlight that the convergence rates for both tests seems to stop in the denser meshes and especially in the highest order  polynomials rectangular geometry cases even if we notice good error results. This in our opinion happens due to the aforementioned way of integration on the polytopal boundary elements and we will  leave the correction for the future, and of course crucial role played the corner points since  in this last experiment --with the smoother circular geometry-- we noticed correction of the pressure suboptimal convergence rates.
}}
\section{Conclusion}
In this effort, we proposed and tested a 
%\TODOr{unstabilized} \TODOek{arbitrary shape boundary elements} interface %unfitted 
discontinuous Galerkin method for the incompressible Stokes flow employing arbitrarily shaped elements. $hp$-version optimal order convergence is proved for higher order finite elements of %equal 
$ \mathfrak{p}_i/\mathfrak{p}_{i-1}$ order for velocity and pressure fields. %For this equal order case, pressure--Poisson penalization   is employed to achieve stability in the bulk of the domain. Additionally, to ensure stability and error estimates which are independent of the position of the boundary with respect to the mesh, the formulation is augmented with additional boundary zone ghost penalty terms for both velocity and pressure. These terms act on the jumps of the normal   derivatives at faces associated with cut elements. 
This method may prove valuable in %engineering 
applications where special emphasis is placed on the effective approximation of pressure, attaining much smaller  errors  in coarser meshes %ek
{and whenever geometrical morphings are taking place}. In fact, control over the error of the pressure field is among the most decisive points of difficulty for many methods. %Additionally, a geometrically robust estimate for the condition number $\kappa(\mathcal{A})$ of the stiffness matrix is provided.
Numerical test experiments demonstrated the very good stability and accuracy properties of the method. The theoretical  convergence rates for the $H^1$-norm of the velocity and the $L^2$-norm of the pressure have been validated by our tests, even for the $\mathfrak{p}_3 / \mathfrak{p}_2 $ case\TODO%ek
{. Significantly smaller errors have been noticed considering the comparison of the method under consideration, with the unfitted dG finite element method of \cite{AreKarKat22} and the results as reported in Table \ref{table33} and Table \ref{table44} for step $h=2^{-6}$. In particular $2.5518e-05$, $0.0003071$ for the presented approach, and $0.0002045$, $0.0003671$ 
 for a fully stabilized with efficient ghost penalty fictitious domain FEM and for the velocity and pressure respectively have been reported. The latter, after comparison, shows smaller errors for the pressure and one order of magnitude better errors for the velocity which validates the superiority of the proposed approach.} 
 %%The sensitivity of the errors with respect to several orders of magnitude of the stabilization parameters which appear in the definition of the method has been examined. 
%Finally, the numerical experiments indicated that the values of the symmetric interior penalty   parameter $\sigma$ 
%%and the stabilization parameter $\gamma_0$
 %should be carefully selected to ensure  stability and accuracy in the case of higher-order finite elements. %On the other hand, in the case of linear elements, special emphasis should be placed in judiciously choosing  stabilization parameter $\gamma_1$, while the individual values of the ghost penalty parameters do not seem as important for error control in contrast with the higher order elements case.
%
%Finally, the  dependence of the condition number $\kappa(\mathcal{A})$ on the position of the boundary with respect to the mesh and the values of the ghost penalty parameters has been investigated, verifying the necessity of including ghost penalty terms in the discretization of the problem to obtain a uniform bound on $\kappa(\mathcal{A})$.
  %
In the present work, we focused on the static Stokes problem. Thus, the method seems very promising. Future work will extend our investigations to more general fluid mechanics problems, including time-dependent problems on complex %and/or evolving
 domains and/or nonlinearities, as well as, Navier-Stokes and fluid-structure interaction systems. Finally, future development would be a proper preconditioner and a reduced order modeling investigation.

\section*{Acknowledgments}
This project has received funding from %the Hellenic Foundation for Research and Innovation (HFRI) and  the  General  Secretariat  for  Research  and  Technology (GSRT), under  grant agreement No[1115] and  the 
``First Call for H.F.R.I. Research Projects to support Faculty members and Researchers and the procurement of high-cost research equipment'' grant 3270 and was supported by computational time granted from the National Infrastructures for Research and Technology S.A. (GRNET S.A.) in the National HPC facility - ARIS - under project ID pa190902. The author would like also to thank  Prof. Emmanuil Georgoulis %\EK{and}  Prof. K. Chrysafinos 
from NTUA for valuable comments and inspiring ideas. We would like also to thank the contributors of the ngsolve-ngsxfem, \cite{Scho14,LeHePreWa21}.
\bibliographystyle{amsplain}
%\bibliography{}
\bibliography{bibfile_sissa}

\providecommand{\bysame}{\leavevmode\hbox to3em{\hrulefill}\thinspace}
\providecommand{\MR}{\relax\ifhmode\unskip\space\fi MR }
% \MRhref is called by the amsart/book/proc definition of \MR.
\providecommand{\MRhref}[2]{%
  \href{http://www.ams.org/mathscinet-getitem?mr=#1}{#2}
}
\providecommand{\href}[2]{#2}
\begin{thebibliography}{10}

\bibitem{AK21}
A.~Aretaki and E.~N. Karatzas, \emph{{Random geometries for optimal control PDE
  problems based on fictitious domain FEMs and cut elements}}, Journal of
  Computational and Applied Mathematics \textbf{412} (2022), 114286.

\bibitem{AreKarKat22}
A.~Aretaki, E.~N. Karatzas, and G.~Katsouleas, \emph{{Equal Higher Order
  Analysis of an Unfitted Discontinuous Galerkin Method for Stokes Flow
  Systems}}, J. Sci. Comput. \textbf{91} (2022), no.~2, 48.

\bibitem{ARTIOLI20202057}
E.~Artioli, A.~Sommariva, and M.~Vianello, \emph{Algebraic cubature on
  polygonal elements with a circular edge}, Computers \& Mathematics with
  Applications \textbf{79} (2020), no.~7, 2057--2066, Advanced Computational
  methods for PDEs.

\bibitem{BJK90}
A.~Baker, W.~N. Jureidini, and O.~A. Karakashian, \emph{Piecewise solenoidal
  vector fields and the {S}tokes problem}, SIAM J. Numer. Anal. \textbf{27}
  (1990), 1466--1485.

\bibitem{BaElliot87}
J.~W. Barrett and C.~M. Elliott, \emph{{Fitted and Unfitted Finite-Element
  Methods for Elliptic Equations with Smooth Interfaces}}, IMA Journal of
  Numerical Analysis \textbf{7} (1987), no.~3, 283--300.

\bibitem{BE09}
P.~Bastian and C.~Engwer, \emph{{An unfitted finite element method using
  discontinuous Galerkin}}, J. Numer. Meth. Engrg. \textbf{79} (2009),
  1557--1576.

\bibitem{BEFI11}
P.~Bastian, C.~Engwer, J.~Fahlke, and O.~Ippisch, \emph{An unfitted
  discontinuous galerkin method for pore--scale simulations of solute
  transport}, Math. Comput. Simulat. \textbf{81} (2011), 2051--2061.

\bibitem{DAVEIGA13virtualelements}
L.~{Beirao Da Veiga}, F.~Brezzi, A.~Cangiani, G.~Manzini, L.~D. Marini, and
  A.~Russo, \emph{Basic principles of virtual element methods}, Mathematical
  Models and Methods in Applied Sciences \textbf{23} (2013), no.~01, 199--214.

\bibitem{DaVeiga14mimetics}
L.~{Beirao Da Veiga}, K.~Lipnikov, and G.~Manzini, \emph{The mimetic finite
  difference method for elliptic problems}, MS\&A Modeling, Simulation and
  Applications Springer, Cham, 2014.

\bibitem{BG14}
W.~Bo and J.~W. Grove, \emph{volume of fluid method based ghost fluid method
  for compressible multi-fluid flows}, Computers \& Fluids \textbf{90} (2014),
  113--122.

\bibitem{BH2012}
E.~Burman and P.~Hansbo, \emph{{Fictitious domain finite element methods using
  cut elements II}}, A stabilized Nitsche method, Appl. Num. Math. \textbf{62}
  (2012), no.~4, 328--341.

\bibitem{BuHa14_III}
\bysame, \emph{{Fictitious domain methods using cut elements: III. A stabilized
  Nitsche method for Stokes problem}}, ESAIM: Math. Model. Numer. Anal. 48(3),
  859-874 (2014) \textbf{48} (2014), no.~3, 859--874.

\bibitem{Cangiani2022hpVersionDG}
A.~Cangiani, Z.~Dong, and E.~H. Georgoulis, \emph{{hp-Version discontinuous
  Galerkin methods on essentially arbitrarily-shaped elements}}, Math. Comput.
  \textbf{91} (2022), 1--35.

\bibitem{Georgoulis17}
A.~Cangiani, Z.~Dong, E.H. Georgoulis, and P.~Houston, \emph{{hp-{V}ersion
  Discontinuous Galerkin Methods on Polygonal and Polyhedral Meshes}},
  SpringerBriefs in Mathematics, Springer International Publishing, 2017.

\bibitem{CockburnHDG09}
B.~Cockburn, J.~Gopalakrishnan, and R.~Lazarov, \emph{{Unified Hybridization of
  Discontinuous Galerkin, Mixed, and Continuous Galerkin Methods for Second
  Order Elliptic Problems}}, SIAM Journal on Numerical Analysis \textbf{47}
  (2009), no.~2, 1319--1365.

\bibitem{CKSS02}
B.~Cockburn, G.~Kanschat, D.~Sch{{\"{o}}}tzau, and C.~Schwab, \emph{{Local
  discontinuous Galerkin methods for the Stokes system}}, SIAM J. Numer. Anal
  \textbf{40} (2002), 319--343.

\bibitem{DiPE12}
D.~A. Di~Pietro and A.~Ern, \emph{{Mathematical aspects of discontinuous
  Galerkin methods}}, vol.~69, Springer, 2012.

\bibitem{DIPIETRO15}
\bysame, \emph{A hybrid high-order locking-free method for linear elasticity on
  general meshes}, Computer Methods in Applied Mechanics and Engineering
  \textbf{283} (2015), 1--21.

\bibitem{DWXW17}
H.~Dong, B.~Wang, Z.~Xie, and L.~Wang, \emph{{An unfitted hybridizable
  discontinuous Galerkin method for the Poisson interface problem and its error
  analysis}}, IMA J. Numer. Anal. \textbf{37} (2016), 444--476.

\bibitem{Dong19}
Z.~Dong, \emph{Discontinuous galerkin methods for the biharmonic problem on
  polygonal and polyhedral meshes}, International Journal of Numerical Analysis
  and Modeling \textbf{{Volume 16}} (2019), 825--846.

\bibitem{Dong2021GPUacceleratedDG}
Z.~Dong, E.~H. Georgoulis, and Kapas T., \emph{{GPU-accelerated discontinuous
  Galerkin methods on polytopic meshes}}, SIAM J. Sci. Comput. \textbf{43}
  (2021), C312--C334.

\bibitem{dong2022hp}
Z.~Dong and L.~Mascotto, \emph{{$ hp $-optimal interior penalty discontinuous
  Galerkin methods for the biharmonic problem}}, arXiv preprint
  arXiv:2212.03735 (2022).

\bibitem{DL19}
M.~Duprez and A.~Lozinski, \emph{$\phi$--fem a finite element method on domains
  defined by level--sets},  (2019), arXiv: 1901.03966v3.

\bibitem{EMNS19}
C.~Engwer, S.~May, A.~N{{\"{u}}}{{\ss}}ing, and F.~Streitb{{\"{u}}}rger,
  \emph{{A Stabilized DG Cut Cell Method for Discretizing the Linear Transport
  Equation}}, SIAM Journal on Scientific Computing \textbf{42} (2020), no.~6,
  A3677--A3703.

\bibitem{ERW16}
C.~Engwer, T.~Ranner, and S.~Westerheide, \emph{{An unfitted discontinuous
  Galerkin scheme for conservation laws on evolving surfaces}}, Proceedings of
  the Conference Algoritmy (2016), 44--54.

\bibitem{BeTe10}
T.~Fries and T.~Belytschko, \emph{The extended/generalized finite element
  method: An overview of the method and its applications}, International
  Journal for Numerical Methods in Engineering \textbf{84} (2010), no.~3,
  253--304.

\bibitem{Georgoulis2010OnTS}
E.~H. Georgoulis, Edward J.~C. Hall, and J.~M. Melenk, \emph{{On the
  Suboptimality of the p-Version Interior Penalty Discontinuous Galerkin
  Method}}, Journal of Scientific Computing \textbf{42} (2010), 54--67.

\bibitem{GRW05}
V.~Girault, B.~Rivi{\`e}re, and M.~A. Wheeler, \emph{{Discontinuous Galerkin
  method with non--overlapping domain decomposition for the Stokes and
  Navier--Stokes problems}}, Math. Comput. \textbf{74} (2005), 53--84.

\bibitem{HaHa02}
A.~Hansbo and P.~Hansbo, \emph{{An unfitted finite element method, based on
  Nitsche's method, for elliptic interface problems}}, Comput. Methods Appl.
  Mech. Eng \textbf{191} (2002), 5537--5552.

\bibitem{Houston2002}
Paul Houston et~al., \emph{Discontinuous hp-finite element methods for
  advection-diffusion-reaction problems}, SIAM Journal on Numerical Analysis
  \textbf{39} (2002), no.~6, 2133--2163.

\bibitem{HNPK13}
L.~N.~T. Huynh, N.~C. Nguyen, J.~Peraire, and B.~C. Khoo, \emph{{A high order
  hybrizidable discontinuous Galerkin method for elliptic interface problems}},
  Int. J. Numer. Meth. Engrg. \textbf{93} (2013), 183--200.

\bibitem{Ka24}
E.~N. Karatzas, \emph{{Boundary and distributed optimal control for a
  population dynamics PDE model with discontinuous in time Galerkin FEM
  schemes}}, Comput. Math. Appl. \textbf{165} (2024), no.~6, 70--87.

\bibitem{KBR19}
E.~N. Karatzas, F.~Ballarin, and G.~Rozza, \emph{Projection-based reduced order
  models for a cut finite element method in parametrized domains}, Computers \&
  Mathematics with Applications \textbf{3} (2020), no.~79, 833--851.

\bibitem{KaRo21}
E.~N. Karatzas and G.~Rozza, \emph{{A Reduced Order Model for a Stable Embedded
  Boundary Parametrized Cahn-Hilliard Phase-Field System Based on Cut Finite
  Elements}}, J Sci Comput \textbf{89} (2021), no.~9, 1--29.

\bibitem{KSASR2019}
E.~N. Karatzas, G.~Stabile, N.~Atallah, G.~Scovazzi, and G.~Rozza, \emph{A
  reduced order approach for the embedded shifted boundary fem and a heat
  exchange system on parametrized geometries}, IUTAM Symposium on Model Order
  Reduction of Coupled Systems, Stuttgart, Germany, May, 2018. IUTAM
  Bookseries, vol 36. Springer, Cham, 2020, pp.~22--25.

\bibitem{KSNSRa2019}
E.~N. Karatzas, G.~Stabile, L.~Nouveau, G.~Scovazzi, and G.~Rozza, \emph{{A
  reduced basis approach for PDEs on parametrized geometries based on the
  shifted boundary finite element method and application to a Stokes flow}},
  Comput. Methods Appl. Mech. Engrg. \textbf{347} (2019), 568--587.

\bibitem{KSNSRb2019}
\bysame, \emph{{A reduced-order shifted boundary method for parametrized
  incompressible Navier-Stokes equations}}, Comput. Methods Appl. Mech. Engrg.
  \textbf{370} (2020), 113--273.

\bibitem{KB18}
E.~M. Kolahdouz, A.~P.~S. Bhalla, B.~A. Craven, and B.~E. Griffith, \emph{An
  immersed interface method for faceted surfaces}, Journal of Computational
  Physics \textbf{400} (2020).

\bibitem{LeHePreWa21}
C.~Lehrenfeld, F.~Heimann, J.~Preu\ss{ }, and H.~{von Wahl}, \emph{ngsxfem:
  Add-on to ngsolve for geometrically unfitted finite element discretizations},
  Journal of Open Source Software \textbf{6} (2021), no.~64, 32--37.

\bibitem{L19}
A.~Lozinski, \emph{Cutfem without cutting the mesh cells: a new way to impose
  dirichlet and neumann boundary conditions on unfitted meshes}, Computer
  Methods in Applied Mechanics and Engineering \textbf{356} (2019), 75--100.

\bibitem{MaSco17_2}
A.~Main and G.~Scovazzi, \emph{{The shifted boundary method for embedded domain
  computations. Part I: Poisson and Stokes problems}}, Journal of Computational
  Physics \textbf{{372}} ({2018}), {972--995}.

\bibitem{MLLR12}
A.~Massing, M.~G. Larson, A.~Logg, and M.~E. Rognes, \emph{A stabilized
  {N}itsche fictitious domain method for the {S}tokes problem}, J. Sci. Comput.
  \textbf{61} (2014), 604--628.

\bibitem{MLLR14}
\bysame, \emph{A stabilized {N}itsche overlapping mesh method for the {S}tokes
  problem}, Numer. Math. \textbf{128} (2014), 73--101.

\bibitem{M12}
R.~Massjung, \emph{An unfitted discontinuous galerkin method applied to
  elliptic interface problems}, SIAM J. Numer. Anal. \textbf{50} (2012),
  3134--3162.

\bibitem{Melenk2002hpFiniteEM}
J.~M. Melenk, \emph{hp-finite element methods for singular perturbations},
  2002.

\bibitem{MURTI86}
V.~Murti and S.~Valliappan, \emph{Numerical inverse isoparametric mapping in
  remeshing and nodal quantity contouring}, Computers \& Structures \textbf{22}
  (1986), no.~6, 1011--1021.

\bibitem{MURTI88}
V.~Murti, Y.~Wang, and S.~Valliappan, \emph{{Numerical inverse isoparametric
  mapping in 3D FEM}}, Computers \& Structures \textbf{29} (1988), no.~4,
  611--622.

\bibitem{StaRo03}
S.~J. Osher and R.~Fedkiw, \emph{Level set methods and dynamic implicit
  surfaces.}, Applied mathematical sciences, vol. 153, Springer, 2003.

\bibitem{PHO16}
V.~Pasquariello, F.~Hammerl, G.~\'Orley, S.~Hickel, C.~Danowski, A.~Popp, W.A.
  Wall, and N.A. Adams, \emph{A cut-cell finite volume--finite element coupling
  approach for fluid--structure interaction in compressible flow}, J. Comput.
  Phys. \textbf{307} (2016), 670--695.

\bibitem{P1972}
C.~S. Peskin, \emph{Flow patterns around heart valves: A numerical method}, J.
  Comput. Phys. \textbf{10} (1972), 252--271.

\bibitem{Scho14}
J.~Schöberl, A.~Arnold, J.~Erb, J.~M. Melenk, and T.~P. Wihler, \emph{{C++11
  implementation of finite elements in NGSolve}}, Tech. report, Institute for
  Analysis and Scientific Computing, Vienna University of Technology, 2014, ASC
  Report 30/2014.

\bibitem{SST03}
D.~Sch{{\"{o}}}tzau, C.~Schwab, and A.~Toselli, \emph{{Mixed hp-DGFEM for
  incompressible flows}}, SIAM J. Numer. Anal \textbf{40} (2003), 2171--2194.

\bibitem{Schwa98}
C.~Schwab, \emph{{$p$- and $hp$-Finite element methods: Theory and applications
  in solid and fluid mechanics}}, Oxford University Press: Numerical
  mathematics and scientific computation, 1998.

\bibitem{SuTa04}
N.~Sukumar and A.~Tabarraei, \emph{Conforming polygonal finite elements},
  International Journal for Numerical Methods in Engineering \textbf{61}
  (2004), no.~12, 2045--2066.

\bibitem{T02}
A.~Toselli, \emph{{$hp$-discontinuous Galerkin approximations for the Stokes
  problem}}, Mathematical Models and Methods in Applied Sciences \textbf{12}
  (2002), no.~11, 1565--1597.

\bibitem{WC14}
Q.~Wang and J.~Chen, \emph{Unfitted discontinuous galerkin method for elliptic
  interface problems}, Journal of Applied Mathematics \textbf{13} (2014),
  no.~3, 1--10.

\bibitem{WFC13}
C.~H. Wu, O.~M. Faltinsen, and B.~F. Chen, \emph{Time-independent finite
  difference and ghost cell method to study sloshing liquid in 2d and 3d tanks
  with internal structures}, Comm. Comput. Phys. \textbf{13} (2013), no.~3,
  780--800.

\end{thebibliography}

\end{document}